\newcommand{\OO}{\mathscr{O}}
\newcommand{\QQ}{\mathbb{Q}}
\newcommand{\Cfield}{\mathbb{C}}
\newcommand{\PP}{\mathbb{P}}
\newcommand{\image}{\textnormal{im}\,}
\newcommand{\kernel}{\textnormal{ker}\,}
\newcommand{\cokernel}{\textnormal{coker}\,}
\newcommand{\HHom}{\mathscr{H}om} 
\newcommand{\Hom}{\textnormal{Hom}}
\newcommand{\rank}{\textnormal{rk}}
\newcommand{\Pic}{\textnormal{Pic}}
\newcommand{\Ext}{\textnormal{Ext}}
\newcommand{\EExt}{\mathscr{E}xt}
\newcommand{\al}{\alpha}
\newcommand{\ch}{\textnormal{ch}}
\newcommand{\Coh}{\textnormal{Coh}}
\newcommand{\arinj}{\ar@{^{(}->}}
\newcommand{\arsurj}{\ar@{->>}}
\newcommand{\onto}{\twoheadrightarrow}
\newcommand{\into}{\hookrightarrow}
\newcommand{\nuob}{\nu_{\omega, B}}
\newcommand{\numob}{\nu_{m\omega, B}}
\newcommand{\muob}{\mu_{\omega,B}}
\newcommand{\nuz}{\nu_{\omega, 0}}
\newcommand{\om}{\omega}
\newcommand{\Bob}{\mathcal B_{\omega, B}}
\newcommand{\Tob}{\mathcal T_{\omega, B}}
\newcommand{\Fob}{\mathcal F_{\omega, B}}
\newcommand{\Aob}{\mathcal A_{\omega, B}}
\newcommand{\tch}{\widetilde{ch}}
\newcommand{\NS}{\textnormal{NS}}
\newcommand{\rk}{\textnormal{rk}}
\newcommand{\omB}{{_{\om, B}}}  
\newtheorem*{rep@theorem}{\rep@title}
\newcommand{\newreptheorem}[2]{%
\newenvironment{rep#1}[1]{%
 \def\rep@title{#2 \ref{##1}}%
 \begin{rep@theorem}}%
 {\end{rep@theorem}}}
\newtheorem{theorem}{Theorem}[section]
\newtheorem{lemma}[theorem]{Lemma}
\newtheorem{pro}[theorem]{Proposition}
\newtheorem{proposition}[theorem]{Proposition}
\newtheorem{corollary}[theorem]{Corollary}
\theoremstyle{definition}
\newtheorem{example}[theorem]{Example}
\theoremstyle{remark}
\newtheorem{remark}[theorem]{Remark}
\newtheorem{conjecture}[theorem]{Conjecture}
\numberwithin{equation}{section}
\begin{document}

\title{Some examples of tilt-stable objects on threefolds}
\author{Jason Lo}
\address{Department of Mathematics, University of Missouri-Columbia, Columbia, Missouri 65211}
\email{locc@missouri.edu}

\author{Yogesh More}
\address{Department of Mathematics, SUNY College at Old Westbury, Old Westbury, NY 11568}
\email{yogeshmore80@gmail.com}

\begin{abstract}
We investigate properties and describe examples of tilt-stable objects on a smooth complex projective threefold.  We give a structure theorem on slope semistable sheaves of vanishing discriminant, and describe certain Chern classes for which every slope semistable sheaf yields a Bridgeland semistable object of maximal phase.  Then, we study tilt stability as the polarisation $\omega$ gets large, and give sufficient conditions for tilt-stability of sheaves of the following two forms: 1) twists of ideal sheaves or 2) torsion-free sheaves whose first Chern class is twice a minimum possible value.
\end{abstract}

\maketitle

\tableofcontents

\section{Introduction}

Let $X$ be a smooth projective threefold over $\Cfield$ throughout, unless otherwise stated. It has been a long standing open problem to construct a Bridgeland stability condition on an arbitrary  Calabi-Yau threefold. In \cite{BMT}, this problem is reduced to showing a Bogomolov-Gieseker type inequality involving $ch_3$ for a class of objects they call tilt-stable objects. And in \cite{BMT} and \cite{Mac}, this conjecture is proven for $X=\PP^3$.  The purpose of this paper is to give some examples of tilt-stable objects. There are at least two possible uses of specific examples of tilt stable objects: first to investigate the $ch_3$ bound conjectured in \cite{BMT}, and second, for understanding moduli spaces of Bridgeland stable objects.

We now give some details of the constructions introduced in \cite{BMT}. Let $\om, B$ be two numerical equivalence classes of $\mathbb{Q}$-divisors on $X$, with $\om$ an ample class. Motivated by formulas for central charges arising in string theory, one defines a function $Z_\omB:D^b(X) \to \Cfield$ on the bounded derived category $D^b(X)$ of coherent sheaves on $X$ by

\begin{align}
Z_{\om, B}(E)&=-\int_X e^{-B -i\om} \ch(E) \\
&=(-\tch_3(E) + \frac{\om^2}{2}\tch_1(E)) + i(\om\tch_2(E) - \frac{\om^3}{6}\tch_0(E))
\end{align}
where $\tch$ denotes the twisted Chern character $\tch(E)=e^{-B}ch(E)$. In \cite{BMT}, the function $Z_\omB$, along with an abelian category $\Aob$ that is the heart of a t-structure on $D^b(X)$, is conjectured to form a Bridgeland stability condition on $D^b(X)$, for any smooth projective threefold $X$ over $\Cfield$.

The heart $\Aob$ is constructed by a sequence of two tilts, starting with the abelian category $\Coh (X)$.  After a tilt of $\Coh (X)$, the paper \cite{BMT} defines a slope function $\nuob$ on the resulting heart $\Bob$and says an object in $\Bob$ is  ``tilt-(semi)stable'' if it is $\nuob$-(semi)stable.

We now describe the results in this article. In Section \ref{sec.reflexive}, we show that if  $E \in \Bob$ is a $\nuob$-semistable object with $\nuob(E) < \inf$, then $H^{-1}(E)$ must be a reflexive sheaf (Proposition \ref{coro1}).  This allows us to use results on reflexive sheaves in studying tilt-semistable objects.  For $E\in D^b(X)$, we can consider the discriminant in the sense of Dr\'{e}zet: $\overline{\Delta}_{\omega}(E) := (\om^2\tch_1(E))^2 - 2(\om^3\tch_0(E))(\om\tch_2(E)).$ In \cite[Proposition 7.4.1]{BMT}, it is shown that if $E$ is a slope-stable vector bundle on $X$ with $\overline{\Delta}_\om(E)=0$, then $E$ is tilt-stable. We show a partial converse to this:

\begin{reptheorem}{theorem2}
Suppose $E \in \Bob$ satisfies all of the following three conditions:
\begin{enumerate}
\item[(1)] $H^{-1}(E)$ is nonzero, torsion-free, $\muob$-stable (resp.\ $\muob$-semistable), with $\omega^2 \tch_1 (H^{-1}(E)) <0$;
\item[(2)] \label{thm2.two}$H^0(E) \in \Coh^{\leq 1}(X)$;
\item[(3)] \label{thm2.three}$\overline{\Delta}_\omega (E)=0$.
\end{enumerate}
Then $E$ is tilt-stable (resp.\ tilt-semistable) if and only if $E=H^{-1}(E)[1]$ where $H^{-1}(E)$ is a locally free sheaf.
\end{reptheorem}

Using the above theorem, we also obtain a better understanding of slope semistable sheaves of zero discriminant:

\begin{reptheorem}{theorem3}
Suppose $B=0$.  Let $F$ be a $\mu_\omega$-semistable sheaf with $\overline{\Delta}_\omega (F)=0$.  Then $\EExt^1 (F,\OO_X)$ is zero, and $F^\ast$ is locally free.  Therefore, $F$ is locally free if and only if the 0-dimensional sheaf $\EExt^2 (F,\OO_X)$ is zero.
\end{reptheorem}
As a corollary, we show how every slope semistable sheaf of zero discriminant and zero tilt-slope yields a $Z_{\omega, 0}$-semistable object of maximal phase in $\mathcal A_{\omega, 0}$:

\begin{reptheorem}{theorem4}
Suppose $F$ is a $\mu_\omega$-semistable sheaf with $\overline{\Delta}_\omega (F)=0$, $\nu_\omega (F)=0$  and $\omega^2 ch_1 (F)>0$.  Then $F^\vee [2]$ is an object of phase 1 with respect to $Z_{\omega,0}$ in $\mathcal{A}_{\omega,0}$.
\end{reptheorem}
Since taking derived dual and shift both preserve families of complexes, Theorem \ref{theorem4} implies that the moduli of $Z_{\omega, 0}$-semistable objects in $\mathcal A_{\omega, 0}$ with the prescribed Chern classes (if it exists) contains the  moduli of $\mu_\omega$-semistable sheaves  as an open subspace.  In the case of rank-one objects, for example, the open subspace contains the Hilbert scheme of points (see Remark \ref{remark1}).

In Section \ref{sec.infinity}, we analyse tilt-stability at the large volume limit.  In Remarks \ref{remark2} and \ref{remark3}, we mention the connections between tilt-semistable objects and polynomial semistable objects.

In section \ref{sec.2c}, we give some sufficient conditions for a torsion-free sheaf  $E\in \Tob$ with $\om^2\tch_1(E) =2c$ to be tilt-stable. Here, the number $c$ is defined in \cite[Lemma 7.2.2]{BMT} as
   \[
   c:=\min \{ \om^2\tch_1(F)>0 \mid F \in \Bob\}.
   \]
Tilt-semistable objects with $\omega^2 \tch_1 = c$ were already characterised in \cite{BMT}.  Our results include:

\begin{repproposition}{lemma.2c}
Suppose $E \in \Tob$ is a torsion-free sheaf with $\nuob(E)=0$ and $\om^2\tch_1(E)=2c$, where $c$ is defined above.
\begin{enumerate}
\item \label{lemma.2c.part1} If  $\mu_{\om,B,\max}(E) < \frac{\om^3}{\sqrt{3}}$, then $E$ is $\nuob$-stable.

\item \label{lemma.2c.part2} If $\om^3>3\om(\tch_1(M))^2$ for every torsion free slope semistable sheaf $M$ with $\om^2\tch_1(M)=c$, then $E$ is $\nuob$-stable.
\end{enumerate}
\end{repproposition}
We then apply this proposition to studying the tilt-stability of rank one torsion free sheaves that are twists of ideal sheaves of curves.

Finally, in Section \ref{sec.miro-roig}, we use known inequalities between Chern characters of reflexive sheaves on $\PP^3$ to describe many rank $3$ slope-stable reflexive sheaves $E \in \Bob$ that are tilt-unstable. (An object $E \in \Bob$ is defined to be tilt-unstable if it is not tilt-semistable.) We give examples illustrating  an observation  in \cite[p.4]{BMT}, that there are semistable sheaves on $\PP^3$ with $\nu(E)=0$ that do not satisfy $\tch_3(E) \leq \frac{\om^2}{18}\tch_1(E)$ (the inequality in Conjecture \ref{conj.bmt.1.3.1}). Since Conjecture \ref{conj.bmt.1.3.1} has been proven for $X=\PP^3$ (\cite{BMT}, \cite{Mac}), it follows that such $E$ must be tilt-unstable. This shows that the notion of tilt-stability is a necessary hypothesis in Conjecture \ref{conj.bmt.1.3.1}.

{\bf Acknowledgments}: The authors would like to thank Ziyu Zhang for helpful discussions, and Emanuele Macr\`{i} for kindly answering our questions.

{\bf Notation}: We write $\Coh^{\leq i}(X) \subset \Coh(X)$ for the subcategory of coherent sheaves supported in dimension $\leq i$, and $\Coh^{\geq i+1}(X) \subset \Coh(X)$ for the subcategory of coherent sheaves that have no subsheaves supported in dimension $\leq i$.

\section{Preliminaries}

Throughout this article, $X$ will always be a smooth projective threefold, unless otherwise specified.

In this section, we recall constructions introduced in \cite{BMT}. Let us  fix $\om, B \in \NS(X)_\QQ$ in the Neron-Severi group, with $\om$ an ample class. The category $\Aob$ will be formed by starting with $\Coh(X)$ and tilting twice.

First, the twisted slope $\muob$ on $\Coh(X)$ is defined as follows. If $E \in \Coh(X)$ is a torsion sheaf, set $\muob(E)=+\infty$. Otherwise set

\begin{equation}
\muob(E)=\frac{\om^2\tch_1(E)}{\tch_0(E)} = \frac{\om^2(\ch_1(E)-B\rk(E))}{\rk(E)}.
\end{equation}

Following \cite[Section 3.1]{BMT}, we say $E \in \Coh(X)$ is $\muob$-(semi)stable if, for any $F\in \Coh(X)$ with  $0 \neq F \subsetneq E$, we have $\muob(F) < (\leq) \muob(E/F)$. Let $\mu_\omega=\mu_{\om, 0}$. Since $\muob(E)=\mu_\om(E) - B\om^2$, it follows $E \in \Coh(X)$ is $\muob$-(semi)stable if and only if it is $\mu_\om$-(semi)stable.

Let $\Tob \subset \Coh(X)$ be the category generated, via extensions, by $\muob$-semistable sheaves $E$ of slope $\muob(E) >0$, and let $\Fob \subset \Coh(X)$ be the subcategory generated by $\muob$-semistable sheaves of slope $\muob\leq 0$. Then $(\Tob, \Fob)$ forms a torsion pair, and define the abelian category $\Bob$ as the tilt of $\Coh(X)$ with respect to $(\Tob, \Fob)$:

$$\Bob = \langle \Fob[1], \Tob \rangle.$$

For $E \in \Bob$, define its tilt-slope $\nuob(E)$ as follows. If $\om^2\tch_1(E)=0$, then set $\nuob(E)=+\infty$. Otherwise set
\begin{equation}
\nuob(E)=\frac{\Im Z\omB(E)}{\om^2\tch_1(E)}=\frac{\om\tch_2(E)-\frac{\om^3}{6}\tch_0(E)}{\om^2\tch_1(E)}.
\end{equation}

An object $E \in \Bob$ is defined to be $\nuob$-(semi)stable if, for any non-zero proper subobject $F \subset E$ in $\Bob$, we have $\nuob(F) < (\leq) \nuob(E/F)$. We will use tilt-(semi)-stability and $\nuob$-(semi)stability interchangably.

Let $\mathcal{T}_{\omega,B}'$ (resp. $\mathcal{F}_{\omega,B}'$) be the extension closed subcategory of $\Bob$ generated by $\nuob$-stable objects $E \in \Bob$ of tilt-slope $\nuob(E) >0$ (resp. $\nuob(E) \leq 0$). Then $(\mathcal{T}_{\omega,B}', \mathcal{F}_{\omega,B}')$ form a torsion pair in $\Bob$, and tilting $\Bob$ with respect to $(\mathcal{T}_{\omega,B}', \mathcal{F}'_{\omega,B})$ defines an abelian category $\Aob=\langle \mathcal{F}_{\omega,B}'[1], \mathcal{T}_{\omega,B}' \rangle$.

In \cite{BMT}, it is shown that $\sigma=(Z_\omB, \Aob)$ defines a Bridgeland stability condition as long as the image of the function $Z_\omB$ restricted to $\Aob\setminus\{0\}$ lies in the half-closed upper half plane $\mathcal{H}=\{z \in \mathbb{C} \mid \Im z>0, \textnormal{or } [\Im z =0 \textnormal{ and } \Re z < 0 ]\}$. For $E \in \Aob$, it follows automatically from the construction of $\Aob$ that $\Im Z_\omB(E) \geq 0$; the difficulty so far is verifying that $\Re Z_\omB(E) <0$ when $\Im Z_\omB(E)=0$. To be more precise, in \cite[Cor. 5.2.4]{BMT}, it is shown that $\sigma=(Z_\omB, \Aob)$ is a Bridgeland stability condition on $D^b(X)$ if and only if the following conjecture holds:

\begin{conjecture}\cite[Conjecture 3.2.6]{BMT}
Any tilt-stable object $E \in \Bob$ with $\nuob(E)=0$ satisfies
\begin{equation}
\tch_3(E) < \frac{\om^2}{2}\tch_1(E).
\end{equation}
\end{conjecture}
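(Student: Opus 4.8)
The plan is not to attack the displayed $\nuob=0$ inequality in isolation, but to deduce it from a stronger, slope-independent statement that is amenable to induction. Following the philosophy of \cite{BMT}, I would first reformulate the goal as a single quadratic inequality $Q_{\om,B}(E)\ge 0$ in the twisted Chern character, valid for every tilt-semistable $E\in\Bob$, and arranged so that when $\nuob(E)=0$ it specialises exactly to $\tch_3(E)<\tfrac{\om^2}{2}\tch_1(E)$. The advantage is that a quadratic form in $\ch$ interacts predictably with extensions and with the Bogomolov--Gieseker inequality $\overline{\Delta}_\omega(E)\ge 0$ for tilt-semistable objects (proved in \cite{BMT}). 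Since the Chern characters live in a fixed lattice, $\overline{\Delta}_\omega$ takes values in a discrete set on tilt-semistable objects, so I would argue by induction on $\overline{\Delta}_\omega(E)$.

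For the base case $\overline{\Delta}_\omega(E)=0$ the relevant objects are completely classified by the results proved above. By Proposition \ref{coro1}, $H^{-1}(E)$ is reflexive; Theorem \ref{theorem2} (when $H^{-1}(E)\neq 0$) and Theorem \ref{theorem3} (when $E$ is a genuine sheaf, where vanishing discriminant forces local freeness up to a $0$-dimensional sheaf, excluded by stability) together show that a tilt-stable $E$ with vanishing discriminant is, up to shift, a $\muob$-stable locally free sheaf. For such an $E$ the relation $\overline{\Delta}_\omega(E)=0$ pins down $\om\tch_2$ in terms of $\tch_0,\tch_1$, and $\tch_3$ can then be evaluated directly from Hirzebruch--Riemann--Roch; the strict inequality reduces to an explicit positivity check for the twisted Chern character of such a bundle, which I would verify directly.

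For the inductive step assume $\overline{\Delta}_\omega(E)>0$ and deform the polarisation along the ray $\om_t=t\om$ toward the large-volume limit $t\to\infty$. If $E$ remains tilt-stable along the whole ray, then in the limit its behaviour is governed by $\mu_\omega$-stability of its cohomology sheaves; writing the defining triangle $H^{-1}(E)[1]\to E\to H^0(E)$ and using $\tch_3(E)=\tch_3(H^0(E))-\tch_3(H^{-1}(E))$ together with $H^{-1}(E)$ reflexive and $H^0(E)\in\Tob$ supported in dimension $\le 1$, the inequality follows from classical Riemann--Roch and vanishing statements for slope-semistable sheaves. Otherwise $E$ meets a wall at some finite $t_0$, where it becomes strictly tilt-semistable; its tilt Jordan--Hölder factors $E_i$ share the common slope and satisfy $\sum_i\overline{\Delta}_{\om_{t_0}}(E_i)\le\overline{\Delta}_{\om_{t_0}}(E)$ with at least two nonzero summands, so each $\overline{\Delta}_{\om_{t_0}}(E_i)<\overline{\Delta}_{\om_{t_0}}(E)$. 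The inductive hypothesis gives $Q_{\om_{t_0},B}(E_i)\ge 0$ for every $i$, and I would recombine these into $Q_{\om_{t_0},B}(E)\ge 0$ using the extension structure and the sign of the cross terms of the quadratic form.

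The hard part --- and the reason the statement is only a conjecture for general $X$ --- is exactly this recombination: once one leaves a wall the quantity $\tch_3$ is not additive, and there is no a priori positivity guaranteeing that $Q_{\om,B}$ is superadditive under the relevant extensions without further geometric input. For $X=\PP^3$ this is circumvented by replacing the deformation argument near the critical chamber with the exceptional collection $\OO,\OO(1),\OO(2),\OO(3)$: the heart $\Bob$ (after a further tilt) becomes equivalent to finite-dimensional modules over the Beilinson quiver algebra, the tilt-stable objects with $\nuob=0$ correspond to quiver representations of bounded dimension vector, and the inequality collapses to a finite combinatorial verification (\cite{BMT}, \cite{Mac}). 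Producing such a finite model, or otherwise controlling the quadratic form across all walls, on an arbitrary smooth projective threefold is where I expect essentially all of the difficulty to concentrate.
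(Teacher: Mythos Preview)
The statement you are addressing is a \emph{conjecture}, not a theorem: the paper records it as \cite[Conjecture 3.2.6]{BMT} and does not supply a proof, so there is no ``paper's own proof'' to compare your attempt against. You already concede this in your final paragraph, so what you have written is not a proof but a strategic outline --- essentially the wall-crossing/induction-on-discriminant template that underlies the known special cases such as $X=\PP^3$ in \cite{Mac}.

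Even as an outline, two points deserve tightening. First, your base case is not fully covered by Theorems \ref{theorem2} and \ref{theorem3}: Theorem \ref{theorem2} assumes $H^{-1}(E)\neq 0$ and $H^0(E)\in\Coh^{\leq 1}(X)$, while Theorem \ref{theorem3} concerns slope-semistable sheaves; neither addresses, for instance, a tilt-stable pure two-dimensional sheaf with $\overline{\Delta}_\omega=0$. The actual verification of the conjecture in the zero-discriminant case is \cite[Proposition 7.4.2]{BMT}, which you should cite rather than reassemble. Second, your large-volume-limit branch (``the inequality follows from classical Riemann--Roch and vanishing statements'') is a placeholder, not an argument: there is no known Bogomolov--Gieseker-type bound on $\ch_3$ of a slope-semistable sheaf on an arbitrary threefold, and producing one is precisely the content of the conjecture. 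So both the wall-crossing recombination and the large-volume endpoint are genuine gaps, not just technicalities --- which is why the statement remains open for general $X$.
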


In fact, an even stronger inequality is conjectured in \cite{BMT}:

\begin{conjecture}\label{conj.bmt.1.3.1}\cite[Conjecture 1.3.1]{BMT}
Any tilt-stable object $E \in \Bob$ with $\nuob(E)=0$ satisfies
\begin{equation}\label{bmt-conj}
\tch_3(E) \leq \frac{\om^2}{18}\tch_1(E).
\end{equation}
\end{conjecture}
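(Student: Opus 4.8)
Since \eqref{bmt-conj} is at present known only for $X=\PP^3$ (\cite{BMT}, \cite{Mac}), what follows is a strategy for that case; the general statement is open, and the step that does not generalise is precisely the main obstacle. The overall plan is to reduce the inequality, by twisting, wall-crossing and a large-volume-limit argument, to an inequality for $\muob$-semistable torsion-free sheaves on $\PP^3$, and then to prove that inequality using the special homological algebra of projective space together with Hirzebruch-Riemann-Roch.

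First I would normalise. Tensoring by a line bundle $\OO_X(D)$ identifies $\nuob$-(semi)stable objects with $\nu_{\om,B+D}$-(semi)stable objects, and, since they are expressed through the twisted Chern character, both the hypothesis $\nuob(E)=0$ and the inequality \eqref{bmt-conj} are invariant under this operation; as scaling $\om$ by a positive rational also does not affect stability, one may assume that $\om$ is a fixed ample class and that the twisted slopes of $H^{-1}(E)$ and $H^0(E)$ lie in a bounded fundamental domain. Next, the Bogomolov-Gieseker inequality for tilt-semistable objects, $\overline{\Delta}_\om(E)\geq 0$ (\cite{BMT}), together with $\nuob(E)=0$, forces $\om\tch_2(E)$ to be determined by $\tch_0(E)$ and tightly constrains the remaining invariants. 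I would then run a wall-crossing induction in the style of \cite{BMT} and \cite{Mac}: if $E$ is not already, up to shift, a $\muob$-semistable torsion-free sheaf, one crosses a numerical wall for $E$ and replaces it by the resulting (Jordan-H\"older or Harder-Narasimhan) factors, which are ``simpler'' --- for instance of smaller $\overline{\Delta}_\om$ or of smaller $\om^2\tch_1$, the latter linking back to the $\om^2\tch_1=c$ case recalled in the introduction --- so that by induction one reduces to the case that $E$ is, up to shift, a $\muob$-semistable torsion-free sheaf $F$ on $\PP^3$ with $\nuob(F)=0$.

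It then remains to check $\tch_3(F)\leq\frac{\om^2}{18}\tch_1(F)$ for such $F$. On $\PP^3$ I would use either the Beilinson spectral sequence, expressing $F$ through line bundles and cotangent bundles, or restriction to a general plane $\PP^2\subset\PP^3$ together with the two-dimensional Bogomolov-Gieseker inequality there, so as to reduce to the building blocks --- twists of $\OO_{\PP^3}$, ideal sheaves of points, ideal sheaves of lines, and their extensions --- for which $\ch$ can be computed directly, the ideal sheaves via their Koszul resolutions. Equivalently, one applies Hirzebruch-Riemann-Roch with $\mathrm{td}(\PP^3)=1+2H+\tfrac{11}{6}H^2+H^3$ and uses the cohomology vanishing forced by $\muob$-semistability and $\nuob(F)=0$ to convert Euler-characteristic inequalities into the desired bound on $\tch_3$; the cases of equality, realised by suitable twists of $\OO_{\PP^3}$, show that $\tfrac{\om^2}{18}$ is sharp.

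The hard part is the wall-crossing reduction of the second paragraph together with the $\PP^3$-specific bookkeeping of the third. Both rely on features unavailable for a general threefold: an exceptional collection and a resolution of the diagonal, an explicit Todd class, and enough control on low-discriminant (tilt-)semistable objects for the induction to terminate. For an arbitrary smooth projective threefold, and in particular for a Calabi-Yau threefold, none of this is at hand, and neither the structure of tilt-semistable objects with $\nuob=0$ nor the final Chern-character estimate is understood --- which is exactly why \eqref{bmt-conj} remains open in general.
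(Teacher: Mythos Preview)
The statement you were asked to prove is a \emph{conjecture}, not a theorem: the paper states it as Conjecture~\ref{conj.bmt.1.3.1}, attributes it to \cite{BMT}, and notes that it has been verified only for $\PP^3$ in \cite{BMT} and \cite{Mac}. The paper contains no proof of this statement and does not attempt one; its role in the paper is purely as background and as a test against which the paper's examples of tilt-(un)stable objects are measured (see Sections~\ref{sec.2c} and~\ref{sec.miro-roig}). So there is nothing to compare your attempt to.

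You correctly recognise all of this and present, in lieu of a proof, a strategy sketch for the $\PP^3$ case together with a diagnosis of why it fails in general. That is the right response to being handed an open problem. Two remarks nonetheless. First, your sketch is considerably vaguer than what \cite{Mac} actually does: the heart of Macr\`{i}'s argument is an explicit tilt of the full strong exceptional collection $(\OO,\OO(1),\OO(2),\OO(3))$ and a direct comparison of the resulting heart with $\Bob$, not a generic ``wall-crossing induction'' terminating in $\muob$-semistable sheaves; your second paragraph gestures at the right ingredients but does not pin down the actual mechanism. Second, the claim that tensoring by $\OO_X(D)$ leaves \eqref{bmt-conj} invariant is correct only because both sides are computed with the $B$-twisted Chern character and $B$ shifts accordingly; it would be worth saying this more carefully, since for fixed $B$ the inequality is certainly not invariant under twisting $E$ alone.
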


In \cite{BMT} and \cite{Mac}, this conjecture is proven for $\PP^3$, by using the fact that $\PP^3$ has a full strong exceptional collection.

\section{Reflexive sheaves and objects of zero discriminant}\label{sec.reflexive}

In \cite[Proposition 7.4.1]{BMT}, it is shown that any slope stable vector bundle with zero discriminant is a tilt-stable object.  The first goal of this section is to prove a partial converse to this result (Theorem \ref{theorem2}).  As a corollary, we produce a structure theorem on slope semistable sheaves of zero discriminant (Theorem \ref{theorem3}).  As another corollary, we show how, given any slope semistable sheaf of zero discriminant and $\nuob=0$, we can produce a $Z_{\omega, 0}$-semistable object in $\mathcal A_{\omega, 0}$ of maximal phase (Theorem \ref{theorem4}).  This implies that the Hilbert scheme of points on $X$ in contained in a moduli of Bridgeland semistable objects on $X$ if the moduli exists (Remark \ref{remark1}).

We begin with the following link between reflexive sheaves and $\nuob$-semistable objects in $\Bob$:

\begin{proposition}\label{coro1}
If $E \in \Bob$ is a $\nuob$-semistable object with $\nuob (E) < +\infty$, then $H^{-1}(E)$ is a reflexive sheaf.
\end{proposition}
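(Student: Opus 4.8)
The plan is to argue by contradiction, using the reflexive hull of $H^{-1}(E)$ to produce a subobject of $E$ in $\Bob$ that violates $\nuob$-semistability. Write $F := H^{-1}(E)$. Since $E \in \Bob = \langle \Fob[1], \Tob \rangle$, we have $F \in \Fob$ and $H^0(E) \in \Tob$, and the truncation triangle $F[1] \to E \to H^0(E) \xrightarrow{+1}$ has all three vertices in the heart $\Bob$; it is therefore a short exact sequence $0 \to F[1] \to E \to H^0(E) \to 0$ in $\Bob$, so $F[1]$ is a subobject of $E$ there. As $F \in \Fob$ it is torsion-free, so it suffices to prove $F$ is reflexive, and we may assume $F \neq 0$.

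Suppose $F$ is not reflexive. Then the canonical inclusion $F \hookrightarrow F^{\ast\ast}$ has nonzero cokernel $Q := F^{\ast\ast}/F$, and $Q$ is supported in dimension $\leq 1$, so $Q \in \Coh^{\leq 1}(X) \subset \Tob \subset \Bob$. The one nontrivial input is that $F^{\ast\ast} \in \Fob$: letting $G \subseteq F^{\ast\ast}$ be the maximal $\muob$-destabilising subsheaf (nonzero, hence of positive rank), the subsheaf $G \cap F \subseteq F$ has the same rank as $G$, and since $G/(G \cap F)$ embeds into $Q$ it is supported in codimension $\geq 2$, so $\ch_1(G) = \ch_1(G \cap F)$; therefore $\muob(G) = \muob(G \cap F) \leq \mu_{\om,B,\max}(F) \leq 0$, whence $\mu_{\om,B,\max}(F^{\ast\ast}) \leq 0$ and $F^{\ast\ast} \in \Fob$. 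It follows that $F^{\ast\ast}[1] \in \Bob$, so rotating the exact triangle attached to $0 \to F \to F^{\ast\ast} \to Q \to 0$ gives a triangle $Q \to F[1] \to F^{\ast\ast}[1] \xrightarrow{+1}$ all of whose vertices lie in $\Bob$; this is a short exact sequence $0 \to Q \to F[1] \to F^{\ast\ast}[1] \to 0$ in $\Bob$ (its long exact cohomology sequence recovers $0 \to F \to F^{\ast\ast} \to Q \to 0$, confirming the placements). Hence $Q$ is a subobject of $F[1]$, and so of $E$, in $\Bob$.

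To conclude, note that $Q \in \Coh^{\leq 1}(X)$ forces $\ch_0(Q) = \ch_1(Q) = 0$, so $\om^2 \tch_1(Q) = 0$ and $\nuob(Q) = +\infty$. Because $\nuob(E) < +\infty$ we have $\om^2\tch_1(E) \neq 0$, hence $Q \neq E$ and $Q$ is a nonzero proper subobject of $E$; moreover $\om^2\tch_1(E/Q) = \om^2\tch_1(E) \neq 0$, so $\nuob(E/Q) < +\infty < \nuob(Q)$, contradicting the $\nuob$-semistability of $E$. Therefore $Q = 0$ and $F = H^{-1}(E)$ is reflexive.

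I expect the genuine content to be the assertion $F^{\ast\ast} \in \Fob$, i.e.\ that passing to the reflexive hull does not raise the maximal twisted slope; everything else is formal manipulation of the torsion pair $(\Tob, \Fob)$ and the tilt. This assertion is the familiar fact that $\mu$-(semi)stability is preserved under double dualisation, applied to the Harder--Narasimhan factors, with the rank and $\ch_1$ comparison above making it precise; one could equally well invoke it as known.
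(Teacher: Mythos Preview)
Your proof is correct and follows essentially the same approach as the paper: show that $H^{-1}(E)^{\ast\ast} \in \Fob$ (the paper isolates this as a separate lemma, proved via $F_n^{\ast\ast}$ rather than via $G \cap F$, but the content is the same), then use the resulting injection $Q \hookrightarrow H^{-1}(E)[1] \hookrightarrow E$ in $\Bob$ to obtain a subobject of infinite tilt-slope contradicting $\nuob$-semistability. Your inline verification that $Q$ is a \emph{proper} subobject is a nice detail the paper leaves implicit.
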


The proof of this proposition relies on:

\begin{lemma}\label{lemma.doubledual}
Let $F \in \Coh(X)$ be a torsion-free sheaf, and let $F_n \in \Coh(X)$ be the Harder-Narasimhan $\muob$-semistable factor of $F$ with greatest $\muob$-slope. If $Q$ is the Harder-Narasimhan $\muob$-semistable factor of $F^{\ast\ast}$ with greatest $\muob$-slope, then $\muob(Q)=\muob(F_n)$. Hence if $F \in \Fob$, then $F^{\ast\ast} \in \Fob$.
\end{lemma}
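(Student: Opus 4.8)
The plan is to reduce the whole statement to the single equality
\[
\mu_{\om,B,\max}(F) = \mu_{\om,B,\max}(F^{\ast\ast}),
\]
where $\mu_{\om,B,\max}(-)$ denotes the largest slope occurring among the $\muob$-semistable Harder--Narasimhan factors of a torsion-free sheaf. Indeed, by the very definition of $F_n$ and $Q$ this number is $\muob(F_n)$ on the left and $\muob(Q)$ on the right, so the asserted equality $\muob(Q)=\muob(F_n)$ is exactly the displayed claim. Here I would use only two standard facts about the double dual on the smooth threefold $X$: that $F^{\ast\ast}$ is reflexive, in particular torsion-free, and that the canonical monomorphism $F \hookrightarrow F^{\ast\ast}$ has cokernel $F^{\ast\ast}/F$ supported in codimension $\geq 2$, hence (since $X$ is a threefold) in dimension $\leq 1$. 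The case $F=0$ is trivial, so I assume $F \neq 0$; recall also that $\muob$-stability coincides with $\mu_\om$-stability, so the Harder--Narasimhan filtrations are unambiguous.

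The inequality ``$\leq$'' is immediate: the maximal destabilising subsheaf of $F$ is, via $F \hookrightarrow F^{\ast\ast}$, also a subsheaf of $F^{\ast\ast}$. For the reverse inequality I would take the maximal destabilising subsheaf $Q \subset F^{\ast\ast}$, so $\muob(Q) = \mu_{\om,B,\max}(F^{\ast\ast})$, and intersect it inside $F^{\ast\ast}$ with $F$. Two observations then finish the argument. First, $Q \cap F \neq 0$: otherwise the composite $Q \hookrightarrow F^{\ast\ast} \to F^{\ast\ast}/F$ would be injective, exhibiting the nonzero torsion-free sheaf $Q$ (a subsheaf of $F^{\ast\ast}$) as a subsheaf of the torsion sheaf $F^{\ast\ast}/F$, which is impossible. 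Second, $Q/(Q\cap F)$ embeds into $F^{\ast\ast}/F$, hence is supported in dimension $\leq 1$, so its $\tch_0$ and $\tch_1$ vanish; the short exact sequence $0 \to Q\cap F \to Q \to Q/(Q\cap F) \to 0$ then gives $\rk(Q\cap F) = \rk(Q)$ and $\om^2\tch_1(Q\cap F) = \om^2\tch_1(Q)$, so $\muob(Q\cap F) = \muob(Q)$. Therefore $\mu_{\om,B,\max}(F) \geq \muob(Q\cap F) = \mu_{\om,B,\max}(F^{\ast\ast})$, and combining with ``$\leq$'' yields the equality.

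For the concluding sentence I would invoke the description of $\Fob$ as the category of torsion-free sheaves $G$ with $\mu_{\om,B,\max}(G) \leq 0$ (any $\muob$-semistable sheaf of finite slope is torsion-free, and a torsion-free sheaf lies in $\Fob$ precisely when all of its Harder--Narasimhan factors have nonpositive slope). If $F \in \Fob$, then $F^{\ast\ast}$ is again torsion-free and, by the equality just proved, $\mu_{\om,B,\max}(F^{\ast\ast}) = \mu_{\om,B,\max}(F) \leq 0$, so $F^{\ast\ast} \in \Fob$. The only step with genuine content is the ``$\geq$'' inequality, where the crux — both for the non-vanishing of $Q\cap F$ and for the equality of slopes — is exactly the codimension-$\geq 2$ bound on $F^{\ast\ast}/F$; I do not anticipate any obstacle beyond invoking that input correctly, so the proof should be routine once it is in place.
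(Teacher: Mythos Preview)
Your proof is correct and follows essentially the same route as the paper: the heart of both arguments is to take the maximal destabilising subsheaf $Q \subset F^{\ast\ast}$, set $K = Q \cap F$, and use that $Q/K \hookrightarrow F^{\ast\ast}/F$ is supported in codimension $\geq 2$ to conclude $\muob(K)=\muob(Q)$, hence $\mu_{\om,B,\max}(F) \geq \muob(Q)$. The only minor difference is in the easy inequality $\mu_{\om,B,\max}(F) \leq \mu_{\om,B,\max}(F^{\ast\ast})$: you embed $F_n$ itself into $F^{\ast\ast}$, whereas the paper first passes to $F_n^{\ast\ast}$ (noting it remains $\muob$-semistable) and embeds that --- your version is in fact slightly more direct, since the semistability of $F_n^{\ast\ast}$ is not actually needed for the inequality.
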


\begin{proof}
 Observe that $F_n^{\ast\ast}$ is a $\muob$-semistable sheaf, and we have a canonical inclusion $F_n^{\ast\ast} \overset{\iota}{\into} F^{\ast\ast}$. Hence $\muob(Q) \geq \muob(F_n^{\ast\ast})$, by the proof of existence of HN filtrations. Let $Q \overset{\alpha}{\into} F^{\ast\ast}$ be the inclusion,  $F^{\ast\ast}\overset{\beta}{\to} T$ be the cokernel of $\iota$, and $K=\ker \beta\alpha$. We have a commutative diagram with exact rows and columns:

\begin{equation}
\xymatrix{&0 \ar[d] &0\ar[d] \\0\ar[r] &K \ar[r] \ar[d]_k &F \ar[d]^{\iota} \\
0 \ar[r] &Q \ar[r]^{\alpha} \ar[d]^{\beta\alpha} &F^{\ast\ast} \ar[d]^{\beta} \\
&T \ar[r]^{=} & T  \ar[d]\\
&&0}
\end{equation}

We have $\muob(K)=\muob(Q)$ (since $Q/K \subset T$ has codimension at least two), and $\muob(F_n) \geq\muob(K)$ (because $\muob(F_n)$ is greater than or equal to the slope of any subsheaf of $F$). Hence $\muob(F_n) \geq \muob(Q)$. Combined with $\muob(Q) \geq \muob(F_n^{\ast\ast})=\muob(F_n)$ we have $\muob(F_n) = \muob(Q)$.

The final statement follows from the definition that a sheaf $F \in \Coh(X)$ is in $\Fob$ if and only if $\mu_{\omega, B;\, \text{max}}(F):=\muob(F_n) \leq 0$.
\end{proof}

\begin{proof}[Proof of Proposition \ref{coro1}]
By Lemma \ref{lemma.doubledual}, we have $H^{-1}(E)^{\ast \ast} \in \Fob$.  Hence the canonical short exact sequence $0 \to H^{-1}(E) \to H^{-1}(E)^{\ast \ast} \to T \to 0$ gives us an injection $T \hookrightarrow H^{-1}(E)[1]$ in $\Bob$.  Together with the injection $H^{-1}(E)[1] \hookrightarrow E$ in $\Bob$, we get $T \hookrightarrow E$ in $\Bob$ where $T \in \Coh^{\leq 1}(X)$. If $T \neq 0$, then $\nuob(T)=\infty > \nuob(E)$, contradicting the $\nuob$-semistability of $E$. Hence $T=0$, i.e.\ $H^{-1}(E)$ must be a reflexive sheaf.
\end{proof}

\begin{corollary}
Let $F$ be a torsion free sheaf with $F[1] \in \Bob$. If $F[1]$ is $\nuob$-semistable, then $F$ is reflexive.
\end{corollary}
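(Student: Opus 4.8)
The plan is to deduce this directly from Proposition~\ref{coro1}, applied to the object $E := F[1]$. The preliminary step is pure bookkeeping: since $F$ is a torsion-free coherent sheaf, membership $F[1]\in\Bob=\langle\Fob[1],\Tob\rangle$ is equivalent to $F\in\Fob$, and the $\Coh(X)$-cohomology sheaves of $E=F[1]$ are $H^{-1}(E)=F$ and $H^{0}(E)=0$. Once this is in hand, $\nuob$-semistability of $F[1]$ together with Proposition~\ref{coro1} yields that $H^{-1}(E)=F$ is reflexive, which is exactly the assertion. So the substance of the corollary really is just the observation that $F$ reappears as the $(-1)$st cohomology of the shifted object.

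The one point I would be careful about — and essentially the only step with any content — is the hypothesis $\nuob(E)<+\infty$ carried by Proposition~\ref{coro1}, i.e.\ $\om^{2}\tch_{1}(F[1])=-\om^{2}\tch_{1}(F)\neq 0$. Since $F$ is torsion-free and lies in $\Fob$, one always has $\om^{2}\tch_{1}(F)\le 0$, so this amounts to $\om^{2}\tch_{1}(F)<0$, which is precisely what places $F[1]$ in the finite-tilt-slope range where Proposition~\ref{coro1} has teeth. In the complementary case $\om^{2}\tch_{1}(F)=0$ the object $F[1]$ has maximal tilt-slope $+\infty$; using that $\om^{2}\tch_{1}$ is additive on short exact sequences in $\Bob$ and nonnegative there, every subobject and every quotient of $F[1]$ then also has tilt-slope $+\infty$, so $\nuob$-semistability holds automatically and conveys no information, and this degenerate case must be excluded. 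Thus the corollary is to be read with the standing assumption $\nuob(F[1])<+\infty$, as its role as a corollary of Proposition~\ref{coro1} already signals; granting that reduction there is nothing more to prove, the real work having been done in Proposition~\ref{coro1} and Lemma~\ref{lemma.doubledual}.
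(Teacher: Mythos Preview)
Your approach is exactly the intended one: the corollary is an immediate application of Proposition~\ref{coro1} with $E=F[1]$, and the paper offers no separate argument beyond that. Your observation about the hypothesis $\nuob(F[1])<+\infty$ is well taken and in fact necessary---with $B=0$ and $F=I_Z$ the ideal sheaf of a nonempty $0$-dimensional subscheme, one has $F\in\Fob$, $\nuob(F[1])=+\infty$, and $F[1]$ is trivially $\nuob$-semistable, yet $F$ is not reflexive---so the corollary must indeed be read as inheriting that finiteness assumption from Proposition~\ref{coro1}.
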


\begin{lemma}\label{lemma-coh0-closed}
The subcategory $\Coh^{\leq 0}(X)$ of $\Bob$ is closed under quotients, subobjects and extensions.
\end{lemma}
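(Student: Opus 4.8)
The plan is to show each of the three closure properties by examining what it means for an object to lie in $\Coh^{\leq 0}(X)$ viewed inside $\Bob$. Recall that $\Bob = \langle \Fob[1], \Tob\rangle$, so every object $G \in \Bob$ sits in a short exact sequence $0 \to H^{-1}(G)[1] \to G \to H^0(G) \to 0$ with $H^{-1}(G) \in \Fob$ and $H^0(G) \in \Tob$. A coherent sheaf $F$ (placed in degree $0$) lies in $\Bob$ exactly when $F \in \Tob$, and a sheaf in $\Coh^{\leq 0}(X)$ is torsion, hence in $\Tob$ (its $\muob$-slope is $+\infty > 0$); so $\Coh^{\leq 0}(X)$ is genuinely a full subcategory of $\Bob$.

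First I would handle extensions: if $0 \to A \to G \to B \to 0$ is exact in $\Bob$ with $A, B \in \Coh^{\leq 0}(X)$, then taking the long exact sequence of cohomology sheaves gives $H^{-1}(G) = 0$ (since $H^{-1}(B) = 0$ and $H^0(A) = A$ injects, forcing $H^{-1}(G) \hookrightarrow H^{-1}(B) = 0$) and $H^0(G)$ an extension of $B$ by $A$ in $\Coh(X)$, which is supported in dimension $\leq 0$; so $G \in \Coh^{\leq 0}(X)$. Next, subobjects: if $A \hookrightarrow G$ in $\Bob$ with $G \in \Coh^{\leq 0}(X)$, then from $0 \to A \to G \to Q \to 0$ in $\Bob$ the cohomology sequence gives $H^{-1}(A) \hookrightarrow H^{-1}(G) = 0$, so $A = H^0(A)$ is a sheaf; moreover $A \hookrightarrow G$ is then a sheaf injection, so $A \subset G$ as sheaves and hence $A \in \Coh^{\leq 0}(X)$. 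Quotients: from the same sequence, $Q$ fits in $0 \to H^{-1}(Q) \to A \to G$ — wait, more carefully, the cohomology sequence reads $0 \to H^{-1}(Q) \to A \to G \to H^0(Q) \to 0$ (using $H^{-1}(A)=H^{-1}(G)=0$), so $H^{-1}(Q)$ is a subsheaf of $A \in \Coh^{\leq 0}(X)$, hence $H^{-1}(Q) \in \Coh^{\leq 0}(X) \subset \Tob$; but $H^{-1}(Q) \in \Fob$ by definition of $\Bob$, and $\Tob \cap \Fob = 0$, so $H^{-1}(Q) = 0$. Then $Q = H^0(Q)$ is a quotient sheaf of $G$, hence supported in dimension $\leq 0$, so $Q \in \Coh^{\leq 0}(X)$.

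The only real subtlety — and the step I would be most careful about — is the bookkeeping of the cohomology long exact sequence for short exact sequences in the tilted heart $\Bob$: one must remember that a short exact sequence in $\Bob$ becomes a triangle in $D^b(X)$, and the associated long exact sequence of cohomology sheaves only has the terms $H^{-1}$ and $H^0$ nonzero (since all three objects live in the length-$2$ heart), which is what makes the vanishing arguments work cleanly. The key recurring tool is that $\Tob \cap \Fob = 0$ inside $\Coh(X)$, combined with the fact that $\Coh^{\leq 0}(X) \subseteq \Tob$ (torsion sheaves have slope $+\infty$) and is closed under subsheaves, quotient sheaves, and extensions already at the level of $\Coh(X)$. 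No serious obstacle is expected; this is essentially a diagram-chase verification, and the lemma is used later mainly to manipulate $H^0$-parts of tilt-semistable objects.
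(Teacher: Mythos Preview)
Your argument has a genuine gap in the subobject case, and it propagates to the quotient case.

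In the subobject step you write: ``$A = H^0(A)$ is a sheaf; moreover $A \hookrightarrow G$ is then a sheaf injection.'' This inference is not valid in general. A monomorphism in $\Bob$ between two objects that happen to lie in $\Tob$ need not be a monomorphism in $\Coh(X)$. For instance, on $\PP^3$ with $\omega = H$, $B=0$, a generic map $\OO(1)^{\oplus 2} \to \OO(2)$ is surjective in $\Coh(X)$ with kernel $\OO \in \Fob$, giving a short exact sequence $0 \to \OO(1)^{\oplus 2} \to \OO(2) \to \OO[1] \to 0$ in $\Bob$; here the first arrow is a $\Bob$-monomorphism between sheaves in $\Tob$ that is \emph{not} a sheaf injection. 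What actually needs to be shown is that $H^{-1}(Q) = 0$, since the long exact sequence reads $0 \to H^{-1}(Q) \to A \to G \to H^0(Q) \to 0$ and $H^{-1}(Q)$ is precisely $\ker_{\Coh(X)}(A \to G)$.

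Your quotient argument then uses ``$H^{-1}(Q)$ is a subsheaf of $A \in \Coh^{\leq 0}(X)$,'' invoking the subobject case to know $A \in \Coh^{\leq 0}(X)$. But the subobject case, as just noted, itself reduces to $H^{-1}(Q)=0$. So the two cases are circular as written: each needs the conclusion of the other.

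The paper avoids this by arguing directly that $H^{-1}(Q)=0$ via a rank and $\omega^2\tch_1$ comparison: if $H^{-1}(Q)\neq 0$ then it is torsion-free of positive rank, hence so is $A$ (since $A/H^{-1}(Q) \subset G$ is $0$-dimensional); but then $A \in \Tob$ of positive rank forces $\omega^2\tch_1(A) > 0$, while $\omega^2\tch_1(A) = \omega^2\tch_1(H^{-1}(Q)) \leq 0$ since $H^{-1}(Q)\in\Fob$. Once $H^{-1}(Q)=0$, both the subobject and quotient statements follow immediately. Your $\Tob \cap \Fob = 0$ idea is in the right spirit, but you cannot place $H^{-1}(Q)$ in $\Tob$ without first knowing $A\in\Coh^{\leq 0}(X)$; the numerical argument is what breaks the circle.
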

\begin{proof}
Given any short exact sequence $0 \to K \to Q \to B \to 0$ in $\Bob$ where $Q \in \Coh^{\leq 0}(X)$, consider the long exact sequence
\[
0 \to H^{-1}(B) \to H^0(K) \to H^0(Q) \to H^0(B) \to 0.
\]
If $H^{-1}(B)$ is nonzero, then it has positive rank, as does $H^0(K)$.  However, then $0 < \omega^2 \tch_1 (H^0(K)) = \omega^2 \tch_1(H^{-1}(B)) \leq 0$, which is a contradiction.  Thus $H^{-1}(B)=0$, and the lemma follows.
\end{proof}

The next proposition roughly says that modifying an object in codimension 3 does not alter its $\nuob$-(semi)stability:

\begin{proposition}\label{lemma11}
Suppose we have a short exact sequence in $\Bob$
\begin{equation}\label{eq7}
0 \to E' \to E \to Q \to 0
\end{equation}
where $Q \in \Coh^{\leq 0}(X)$.
\begin{enumerate}
\item \label{case-Ess} If $E$ is $\nuob$-semistable (resp.\ $\nuob$-stable), then $E'$ is $\nuob$-semistable (resp.\ $\nuob$-stable).
\item \label{case-E'ss} Assuming $\Hom (\Coh^{\leq 0}(X),E)=0$, if $E'$ is $\nuob$-semistable, then $E$ is $\nuob$-semistable.
\item \label{case-E's} Assuming $\Hom (\Coh^{\leq 0}(X),E)=0$ and $\omega^2 \tch_1 (E) \neq 0$, if $E'$ is $\nuob$-stable then $E$ is $\nuob$-stable.
\item \label{case-conj} If $E$ satisfies Conjecture \ref{bmt-conj} then $E'$ also satisfies the same conjecture.
\end{enumerate}
\end{proposition}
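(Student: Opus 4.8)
The plan is to analyze each of the four parts using the given short exact sequence (\ref{eq7}) together with the basic observation that since $Q \in \Coh^{\leq 0}(X)$ we have $\tch_0(Q) = \tch_1(Q) = \tch_2(Q) = 0$, so $E$ and $E'$ have the same rank, $\tch_1$, and $\tch_2$, and hence the same tilt-slope $\nuob$; only $\tch_3$ differs, with $\tch_3(Q) \geq 0$ since $Q$ is a genuine $0$-dimensional sheaf. For part (\ref{case-Ess}), I would take any nonzero proper subobject $F \subset E'$ in $\Bob$; composing with $E' \hookrightarrow E$ makes $F$ a subobject of $E$, so $\nuob(F) \leq \nuob(E/F)$ (resp.\ $<$) by semistability (resp.\ stability) of $E$. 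Here I need to compare $E/F$ (the quotient inside $E$) with $E'/F$ (the quotient inside $E'$); they fit in a short exact sequence $0 \to E'/F \to E/F \to Q \to 0$ in $\Bob$, and since $Q \in \Coh^{\leq 0}$ Lemma \ref{lemma-coh0-closed} applies, so $\nuob(E/F) = \nuob(E'/F)$ (same $\tch_0, \tch_1, \tch_2$). Combining gives $\nuob(F) \leq \nuob(E'/F)$ (resp.\ $<$), which is exactly $\nuob$-semistability (resp.\ stability) of $E'$. One subtlety: in the stable case, if $\nuob(E)=\nuob(F)$ one must rule this out, but that is forced since a strict inequality propagates; and if $\omega^2\tch_1(F)=0$ then $\nuob(F)=\infty$, which would contradict stability of $E$ anyway unless $F=0$.

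For parts (\ref{case-E'ss}) and (\ref{case-E's}) I would argue in the other direction. Given a nonzero proper subobject $G \subset E$ in $\Bob$, I want to produce a subobject of $E'$ with controlled slope. The hypothesis $\Hom(\Coh^{\leq 0}(X), E) = 0$ is what prevents $G$ from mapping onto a piece of $Q$: composing $G \hookrightarrow E \to Q$, the image is a quotient of $G$ and a subobject of $Q \in \Coh^{\leq 0}$, hence lies in $\Coh^{\leq 0}$ by Lemma \ref{lemma-coh0-closed}; but a nonzero such image would give a nonzero map $\Coh^{\leq 0}(X) \to Q \hookrightarrow$ (we actually need the map to factor through $E$) — more carefully, the kernel of $G \to Q$ is $G \cap E' =: G'$, and I claim $G \to Q$ is zero, because otherwise its image is a nonzero object of $\Coh^{\leq 0}(X)$ admitting a map from $G$, and pulling back along $G \hookrightarrow E$ this does not immediately contradict the hypothesis — instead I should use that $G/G' \hookrightarrow Q$ and $G/G'$ is also a quotient of $G \subset E$; the cleanest route is: $G' = \ker(G \to Q) \subset E'$, and $G/G' \in \Coh^{\leq 0}(X)$, so from $0 \to G' \to G \to G/G' \to 0$ and $0 \to G' \to E' \to E'/G' \to 0$ we get $\nuob(G) = \nuob(G')$ and we compare $\nuob(G')$ against $\nuob(E'/G')$ via $\nuob$-(semi)stability of $E'$. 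Finally $E/G$ sits in $0 \to E'/G' \to E/G \to Q/(G/G') \to 0$ with the last term in $\Coh^{\leq 0}$, so $\nuob(E/G) = \nuob(E'/G')$; chaining the (in)equalities yields $\nuob(G) \leq \nuob(E/G)$, resp.\ $<$ for part (\ref{case-E's}) — where the extra hypothesis $\omega^2\tch_1(E) \neq 0$ ensures $\nuob(E)$ is finite so that strict inequalities are not vacuous and that $G' \neq 0$ whenever $G \neq 0$ (a nonzero $G$ with $G' = 0$ would embed into $Q$, forcing $G \in \Coh^{\leq 0}$ and $\nuob(G) = \infty > \nuob(E)$, contradiction). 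I still owe the verification that $\Hom(\Coh^{\leq 0}(X), E) = 0$ forces $G \to Q$ to be zero — this follows because $G/G' \hookrightarrow Q$ is in $\Coh^{\leq 0}(X)$ and the composite $G/G' \hookrightarrow Q$ does not land in $E$, so instead I take the preimage: the subobject $G'' := $ (preimage in $E$ of $G/G' \subset Q$) contains $E'$, fits in $0 \to E' \to G'' \to G/G' \to 0$, and if $G/G' \neq 0$ this sequence together with $\Hom(\Coh^{\leq 0},E)=0$ is not yet a contradiction; the honest statement I will use is Lemma \ref{lemma11}-type bookkeeping showing $G'' $ splits off a copy of $G/G'$ as a direct summand mapping to $E$, contradicting the Hom-vanishing. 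I expect \textbf{this point — extracting the contradiction from $\Hom(\Coh^{\leq 0}(X),E)=0$ — to be the main obstacle}, and the careful way to handle it is to note that $G \to Q$ having nonzero image $I$ means $I$ is a quotient of $G$ and subobject of $Q$; then $\Hom(I, E) \neq 0$ via $I \hookrightarrow Q$? no — rather one uses that the pullback of $I \hookrightarrow Q$ along $E \twoheadrightarrow Q$ gives an extension of $I$ by $E'$ inside $E$, and the inclusion $G \subset E$ together with $G \twoheadrightarrow I$ produces, after a diagram chase, a nonzero morphism from the $0$-dimensional sheaf $I \in \Coh^{\leq 0}(X)$ into $E$, contradicting the hypothesis.

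For part (\ref{case-conj}), this is a direct Chern-class computation: Conjecture (\ref{bmt-conj}) for $E$ reads $\tch_3(E) \leq \frac{\om^2}{18}\tch_1(E)$, and since $\tch_1(E') = \tch_1(E)$ while $\tch_3(E') = \tch_3(E) - \tch_3(Q) \leq \tch_3(E)$ (using $\tch_3(Q) = \mathrm{length}(Q) \geq 0$ for $Q \in \Coh^{\leq 0}(X)$), we get $\tch_3(E') \leq \tch_3(E) \leq \frac{\om^2}{18}\tch_1(E) = \frac{\om^2}{18}\tch_1(E')$, as desired; there is no real obstacle here beyond recording signs. Throughout, I would package the repeated slot-comparison ``$\nuob$ is unchanged by modification in $\Coh^{\leq 0}$'' as a one-line consequence of Lemma \ref{lemma-coh0-closed} and the vanishing of $\tch_{\leq 2}$ on $\Coh^{\leq 0}(X)$, and invoke it wherever needed.
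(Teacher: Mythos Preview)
Your approach for parts (\ref{case-Ess}) and (\ref{case-conj}) is correct and matches the paper. For parts (\ref{case-E'ss}) and (\ref{case-E's}) you have the right skeleton --- set $G' = G \cap E'$, observe $G/G' \in \Coh^{\leq 0}(X)$, and compare slopes --- but there are two problems.

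First, a misconception: the hypothesis $\Hom(\Coh^{\leq 0}(X), E) = 0$ does \emph{not} force $G \to Q$ to be zero, and your attempted diagram chase producing a nonzero morphism $I \to E$ (with $I = \image(G \to Q)$) does not work; there simply is no such map in general, and no splitting of your $G''$ either. The Hom condition is only needed to rule out the degenerate case $G' = 0$. Indeed, if $G' = 0$ then $G \hookrightarrow Q$ exhibits $G$ itself as an object of $\Coh^{\leq 0}(X)$ (by Lemma \ref{lemma-coh0-closed}), and the inclusion $G \hookrightarrow E$ directly contradicts $\Hom(\Coh^{\leq 0}(X), E) = 0$. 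That is all. When $G' \neq 0$, the map $G \to Q$ may well be nonzero, and your slope comparisons $\nuob(G) = \nuob(G')$ and $\nuob(E/G) = \nuob(E'/G')$ already handle this without further work.

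Second, a gap: you do not address the case $G' = E'$, i.e.\ $E' \subset G$. Here $G'$ is not a \emph{proper} subobject of $E'$, so you cannot invoke the (semi)stability of $E'$. The paper handles this case directly: if $E' \subset G$ then $E/G$ is a quotient of $Q$ in $\Bob$, hence $E/G \in \Coh^{\leq 0}(X)$ and $\nuob(E/G) = +\infty$, so $\nuob(G) \leq \nuob(E/G)$ holds automatically for semistability. For stability one needs the strict inequality $\nuob(G) < +\infty$, i.e.\ $\omega^2 \tch_1(G) \neq 0$; but since $G/E' \in \Coh^{\leq 0}(X)$ we have $\omega^2 \tch_1(G) = \omega^2 \tch_1(E') = \omega^2 \tch_1(E)$, which is nonzero by the extra hypothesis in part (\ref{case-E's}). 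This is precisely where that hypothesis is used, not (as you suggest) to ensure $G' \neq 0$.
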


\begin{proof}
Consider a commutative diagram of the form

\xymatrix{
 & 0 \ar[d] & 0 \ar[d] & & \\
 & A' \ar[r]^\al \ar[d]^{\gamma'} & A \ar[d]^\gamma & & \\
 0 \ar[r] & E' \ar[r]^e \ar[d]^{\delta'} & E \ar[r] \ar[d]^\delta & Q \ar[r] & 0 \\
 & B' \ar[r]^\beta \ar[d] & B \ar[d] & & \\
 & 0 & 0 & &}
\noindent where the row is the exact sequence \eqref{eq7}, and both columns are short exact sequences in $\Bob$.

\textit{Proof of part \ref{case-Ess}.} Suppose $A'$ is a nonzero proper subobject of $E'$.  We can put  $A=A'$, $\al = \text{id}_{A'}$, $\gamma = e\gamma'$, and let $\beta$ be the induced map of cokernels from the upper commutative square.  Then by the snake lemma in the abelian category $\Bob$, $\cokernel (\beta)$ is a quotient of $Q$ in $\Bob$, and hence is a 0-dimensional sheaf by Lemma \ref{lemma-coh0-closed}, while $\kernel (\beta)=0$.  Thus $\nuob (B')=\nuob (B)$.  We also have $\nuob (A')=\nuob (A)$ (since $A'=A$).  Note that $A$ is a nonzero proper subobject of $E$. If $E$ is $\nuob$-semistable, then $\nuob (A) \leq \nuob (B)$, implying $\nuob (A') \leq \nuob (B')$, and hence $E'$ is $\nuob$-semistable.  Similarly, if $E$ is $\nuob$-stable, then $E'$ is also $\nuob$-stable.

\textit{Proof of part \ref{case-E'ss}.} Suppose that $A$ is a nonzero proper subobject of $E$.  We can put $B' = \image (\delta e)$, $\delta' = \delta e$, $A' = \kernel (\delta')$, put $\beta$ as the canonical inclusion $\image (\delta') \hookrightarrow B$, and put $\al$ as the induced map of kernels from the lower commutative square.  If $A'=0$, then $\delta'$ is an isomorphism.  However, this implies that $\delta$ restricts to an injection from $E'$, i.e.\ $E' \cap A =0$.  Hence the quotient $E \twoheadrightarrow Q$ induces an injection $A \hookrightarrow Q$, so $A \in \Coh^{\leq 0}(X)$ by Lemma \ref{lemma-coh0-closed}, which contradicts our assumption $\Hom (\Coh^{\leq 0}(X),E)=0$.  Therefore, $A'$ is nonzero.

On the other hand, if $A'=E'$, then $\delta'$ is the zero map, meaning $E' \subset A$, and so there is a surjection $Q \twoheadrightarrow B$ in $\Bob$.  By Lemma \ref{lemma-coh0-closed},  $B \in \Coh^{\leq 0}(X)$, and hence $\nuob(B)=\infty$. So $\nuob (A) \leq \nuob(B)$ when $A'=E'$.

Now, suppose $A'$ is a nonzero proper subobject of $E'$.  Since $\al, e$ and $\beta$ are all injective maps, the snake lemma gives an induced short exact sequence in $\Bob$ of their cokernels:
\begin{equation}\label{eqn.cokernels}
0 \to \cokernel (\al) \to Q \to \cokernel (\beta) \to 0.
\end{equation}
Hence $\cokernel (\al), \cokernel (\beta)$ are both 0-dimensional sheaves by Lemma \ref{lemma-coh0-closed}, giving us $\nuob (A')=\nuob (A)$ and $\nuob (B')=\nuob (B)$.  If $E'$ is $\nuob$-semistable, then $\nuob (A') \leq \nuob (B')$, implying $\nuob (A) \leq \nuob (B)$, and hence $E$ is $\nuob$-semistable.

\textit{Proof of part \ref{case-E's}.} The proof is essentially same as for part \ref{case-E'ss}, with the following additional argument for the scenario $A'=E'$. If $A'=E'$, the hypothesis $0 \neq \omega^2 \tch_1 (E) = \omega^2 \tch_1 (E')$ along with the injection $E' \into A$ in $\Bob$ implies $\omega^2 \tch_1 (A) = \omega^2 \tch_1 (E')>0$ and hence $\nuob(A) <\infty=\nuob(B)$.

\textit{Proof of part \ref{case-conj}.} Assume $E$ is $\nuob$-stable, $\nuob(E)=0$, and $\tch_3(E) \leq \frac{\om^2}{18}\tch_1(E)$. Since the formula for $\nuob$ does not have any dependence on $\tch_3$, we have $\nuob(E)=\nuob(E')$, so $\nuob(E')=0$. By part \ref{case-Ess}, $E'$ is $\nuob$-stable. Finally, $$\tch_3(E')=\tch_3(E)-\tch_3(Q) \leq \tch_3(E) \leq \frac{\om^2}{18}\tch_1(E)=\frac{\om^2}{18}\tch_1(E').$$
\end{proof}

\begin{example}
Let $E$ be a $\muob$-stable vector bundle on $X$ with $\overline{\Delta}_\om(E)=0$. Then $E$ is $\nuob$-stable  by \cite[Prop. 7.4.1]{BMT}. Assume $\om^2\tch_1(E) >0$, so $E \in \Tob$. Begining with any surjection $E \onto Q$ in $\Coh(X)$ with $Q \in \Coh^{\leq 0}(X)$ we can apply Proposition \ref{lemma11} to obtain other examples of tilt-stable objects. For example, suppose $X$ has Picard number one. Then any line bundle $L$ on $X$ satisfies $\overline{\Delta}_\om(L)=0$. Choose a line bundle $L$ with $\om^2\tch_1(L) >0$. Let $I_Z$ be the ideal sheaf of any zero dimensional subscheme $Z \subseteq X$. Then applying Proposition \ref{lemma11} to the exact sequence $0 \to I_Z \otimes L \to L \to \mathcal{O}_Z \to 0$ shows $I_Z \otimes L$ is tilt-stable.
\end{example}

For objects $E \in D^b(X)$, we have the following two versions of discriminants (see \cite[Section 7.3]{BMT} for some background information):
\begin{enumerate}
\item  $\Delta(E):=(\tch_1(E))^2 - 2(\tch_0(E))(\tch_2(E))$, the definition that is usually used for coherent sheaves;
\item  $\overline{\Delta}_{\omega}(E) := (\om^2\tch_1(E))^2 - 2(\om^3\tch_0(E))(\om\tch_2(E))$.
\end{enumerate}
A calculation shows $\Delta(E)=(ch_1(E))^2 - 2(ch_0(E))(ch_2(E))$ that is, we may omit the tildes over the $ch_i$, and in particular the $\Delta(E)$ is independent of $B$. If the Picard number of $X$ is one, then  $\overline{\Delta}_\om$ is independent of $B$ \cite[Section 2.1]{Mac}, but in general  $\overline{\Delta}_\om$ depends on $B$.

For later use, we will need the following lemma.
\begin{lemma}\label{lem.compare-discr}
For any coherent sheaf $F$ on $X$, we have $\overline{\Delta}_\om(F) \geq (\om\Delta(F))\om^3$.
\end{lemma}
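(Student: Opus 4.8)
The plan is to reduce the inequality $\overline{\Delta}_\om(F) \geq (\om\Delta(F))\om^3$ to the Hodge-type (Khovanskii–Teissier) inequality for the ample class $\om$, applied to the divisor classes $\ch_1(F)$ (or $\tch_1(F)$) and $\om$ itself. Writing $r = \ch_0(F) = \rk F$, $D = \ch_1(F) \in \NS(X)_\RR$, and $s = \om\ch_2(F)$, one has $\overline{\Delta}_\om(F) = (\om^2 D)^2 - 2 r \om^3 \cdot s$ and $\om\Delta(F) = \om D^2 - 2 r \cdot s$ (using that $\Delta$ is independent of $B$, so we may drop the tildes). Therefore
\begin{equation}
\overline{\Delta}_\om(F) - (\om\Delta(F))\om^3 = (\om^2 D)^2 - \om^3 (\om D^2),
\end{equation}
and the $s$-terms (the parts involving $\ch_2$) cancel exactly. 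So the lemma is equivalent to the purely numerical statement $(\om^2 D)^2 \geq (\om^3)(\om D^2)$ for every $D \in \NS(X)_\RR$ and every ample $\om$ on the threefold $X$.

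This last inequality is exactly the Khovanskii–Teissier inequality (a special case of the Hodge index theorem on threefolds): for nef classes $A, B$ on an $n$-dimensional projective variety, $(A^{n-1} B)(A^{n-1}\cdot(\text{anything}))$-type bounds hold, and in dimension three with $A = \om$, $B = \om$ for one slot and $D$ occupying the varying slot, one gets $(\om^2 D)^2 \geq (\om^3)(\om D^2)$. The cleanest way to invoke it: by Khovanskii–Teissier applied to the ample class $\om$, the function $t \mapsto \om(\om + tD)^2 = \om^3 + 2t(\om^2 D) + t^2(\om D^2)$ — or more precisely the relevant intersection form restricted to the plane spanned by $\om$ and $D$ — has its discriminant of the expected sign; equivalently, $\log$-concavity of $t \mapsto \om^{3-k}D^k$ gives $(\om^2 D)^2 \geq (\om^3)(\om D^2)$ directly. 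I would cite this (e.g.\ Lazarsfeld, \emph{Positivity in Algebraic Geometry}, Section 1.6, or \cite[Section 7.3]{BMT} where a closely related discriminant inequality is used) rather than reprove it.

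One point requiring a line of care is that $D = \ch_1(F)$ need not be nef — it is an arbitrary class in $\NS(X)_\RR$. However, the Khovanskii–Teissier / Hodge-index inequality $(\om^2 D)^2 \geq (\om^3)(\om D^2)$ holds for \emph{all} $D$ when $\om$ is ample, because it is a statement about the signature of the quadratic form $D \mapsto \om D^2$ on $\NS(X)_\RR$ together with the vector $\om$: on the $\om$-orthogonal complement (with respect to the pairing $D \mapsto \om^2 D$) this form is negative semidefinite, which is the standard Hodge-index-type conclusion in the ample case. Concretely, write $D = \lambda \om + D_0$ with $\om^2 D_0 = 0$; then $\om^2 D = \lambda \om^3$ and $\om D^2 = \lambda^2 \om^3 + \om D_0^2$ with $\om D_0^2 \leq 0$, giving $(\om^2 D)^2 - (\om^3)(\om D^2) = -(\om^3)(\om D_0^2) \geq 0$ since $\om^3 > 0$. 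The main (and essentially only) obstacle is thus making sure the Hodge-index statement is quoted in the correct generality for non-nef $D$; everything else is the bookkeeping cancellation of the $\ch_2$ terms shown above.
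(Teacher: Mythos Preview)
Your proposal is correct and follows essentially the same approach as the paper: both reduce the inequality to the Hodge index inequality $(\om^2 D)^2 \geq (\om^3)(\om D^2)$ applied to $D=\tch_1(F)$, after observing that the $\tch_2$-terms cancel. The paper simply states this in two lines, citing the Hodge Index Theorem directly, whereas you spell out the cancellation and the signature argument for non-nef $D$; one small notational slip is that when you write $\overline{\Delta}_\om(F)=(\om^2 D)^2-2r\om^3 s$ with $D=\ch_1(F)$ and $s=\om\ch_2(F)$ you have implicitly set $B=0$ (since $\overline{\Delta}_\om$ is defined via twisted Chern characters), but as you note parenthetically the argument goes through verbatim with $D=\tch_1(F)$, so this does not affect correctness.
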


\begin{proof}
The Hodge Index Theorem gives $(\om^2\tch_1(F))^2\geq(\om^3)(\om\tch_1(F)^2)$, and hence
\begin{align}
\overline{\Delta}_\om(F) &= (\om^2\tch_1(F))^2 - 2(\om^3\tch_0(F))(\om\tch_2(F)) \\
&\geq \om^3(\om\tch_1(F)^2) - 2(\om^3\tch_0(F))(\om\tch_2(F))=\om^3(\om\Delta(F)).
\end{align}
\end{proof}

The following result was shown in \cite[Cor 7.3.2]{BMT}, and it was a key ingredient for the main result in \cite{Mac}.

\begin{proposition}\cite[Cor 7.3.2]{BMT}
If $E \in \Bob$ is $\nuob$-semistable, then $\overline{\Delta}_\om(E) \geq 0$.
\end{proposition}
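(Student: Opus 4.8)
The plan is to deduce the inequality from the classical Bogomolov--Gieseker inequality $\om\Delta(G)\ge 0$ for $\mu_\om$-semistable torsion-free sheaves $G$, which already gives $\overline{\Delta}_\om(G)\ge 0$ for such sheaves by Lemma \ref{lem.compare-discr}. The difficulty is that $\overline{\Delta}_\om$ is a quadratic rather than linear function of $\ch(E)$, whereas the natural pieces into which one wants to break a tilt-semistable $E\in\Bob$ --- the cohomology sheaves $H^{-1}(E)$, $H^0(E)$ and their $\mu_\om$-Harder--Narasimhan factors --- in general have different tilt-slopes. So one is forced to study $\overline{\Delta}_\om$ as a quadratic form on the numerical data $(\om^3\tch_0,\om^2\tch_1,\om\tch_2)$ and combine the pieces with care.

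First I would dispose of the boundary case $\om^2\tch_1(E)=0$. In the canonical triangle $H^{-1}(E)[1]\to E\to H^0(E)$, with $H^{-1}(E)\in\Fob$ and $H^0(E)\in\Tob$, the hypothesis forces $\om^2\tch_1(H^{-1}(E))=\om^2\tch_1(H^0(E))=0$, so $H^0(E)\in\Coh^{\le 1}(X)$ and every $\mu_\om$-Harder--Narasimhan factor of $H^{-1}(E)$ has slope $0$; classical Bogomolov--Gieseker together with the Hodge Index Theorem then give $\om\tch_2(H^0(E))\ge 0$ and $\om\tch_2(H^{-1}(E))\le 0$, and $\overline{\Delta}_\om(E)=-2(\om^3\tch_0(E))(\om\tch_2(E))\ge 0$ follows. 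For $\om^2\tch_1(E)>0$ I would record the one part of the quadratic-form analysis that is genuinely easy: restricted to the hyperplane of classes with a fixed finite value $\nu$ of $\nuob$, the form $\overline{\Delta}_\om$ becomes the binary quadratic form $x_1^2-2\nu x_0 x_1-\tfrac13 x_0^2$ in $(x_0,x_1)=(\om^3\tch_0,\om^2\tch_1)$, which factors as $-\tfrac13 L_+L_-$ with $L_\pm$ real linear forms, so on that hyperplane $\{\overline{\Delta}_\om\ge 0\}\cap\{\om^2\tch_1\ge 0\}$ is a convex cone on which the polarising bilinear form $\overline{B}$ is nonnegative. Hence $\overline{\Delta}_\om$ is superadditive along decompositions into classes of equal tilt-slope $\nu$ with $\om^2\tch_1\ge 0$; in particular a Jordan--Hölder filtration reduces the $\nuob$-semistable case to the $\nuob$-stable case.

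For $E$ $\nuob$-stable with $\om^2\tch_1(E)>0$, Proposition \ref{coro1} says $H^{-1}(E)$ is reflexive, and one checks that $\nuob$-stability forces $\om^2\tch_1(H^{-1}(E))<0$ whenever $H^{-1}(E)\ne 0$ (otherwise $H^{-1}(E)[1]$ would be a subobject of infinite tilt-slope, which either destabilises $E$ or gives $\om^2\tch_1(E)=0$). This reorganises $E$ in terms of honest sheaves and reduces the problem to proving $\overline{\Delta}_\om\ge 0$ for a $\nuob$-semistable sheaf in $\Tob$, for the $[1]$-shift of a reflexive sheaf in $\Fob$, and --- combining the two via the triangle above --- for a mixed extension; each of these one attacks by inducting on the length of a $\mu_\om$-Harder--Narasimhan filtration. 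Every such factor is either a class in $\Coh^{\le 1}(X)$, for which $\overline{\Delta}_\om=(\om^2\tch_1)^2\ge 0$, or a $\mu_\om$-semistable torsion-free sheaf, for which classical Bogomolov--Gieseker and Lemma \ref{lem.compare-discr} give $\overline{\Delta}_\om\ge 0$. The inductive step compares the top factor $A$ with the complementary sub-/quotient $C$ by writing $\overline{\Delta}_\om(E)=\overline{\Delta}_\om(A)+\overline{\Delta}_\om(C)+2\overline{B}(A,C)$; the tilt-slope inequalities $\nuob(A)\le\nuob(E)\le\nuob(C)$ coming from $\nuob$-semistability, combined with the $\mu_\om$-slope ordering from the Harder--Narasimhan property, are what should force $\overline{B}(A,C)\ge -\sqrt{\overline{\Delta}_\om(A)\overline{\Delta}_\om(C)}$, and hence $\overline{\Delta}_\om(E)\ge(\sqrt{\overline{\Delta}_\om(A)}-\sqrt{\overline{\Delta}_\om(C)})^2\ge 0$.

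The main obstacle is exactly this cross-term estimate. In the equal-tilt-slope regime the region $\{\overline{\Delta}_\om\ge 0\}$ is convex and $\overline{B}$ is automatically nonnegative, but in general $\{\overline{\Delta}_\om\ge 0,\ \om^2\tch_1>0\}$ is not convex, so $\overline{B}(A,C)$ is not controlled for free: one must extract the precise inequality on $\overline{B}(A,C)$ from the interplay of the $\mu_\om$-ordering and the $\nuob$-ordering, and then carry it down the whole Harder--Narasimhan filtration --- the delicate point being that the intermediate quotients need not themselves be $\nuob$-semistable, so the induction must be set up to track a stronger invariant than tilt-semistability. This is the content of \cite[Section 7.3]{BMT}, whose organisation of the argument I would follow.
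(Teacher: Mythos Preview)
The paper does not prove this proposition at all: it is stated verbatim as \cite[Cor.~7.3.2]{BMT} and used as a black box, so there is no ``paper's own proof'' to compare your proposal against. Your sketch is therefore not a reconstruction of anything in the present paper but an attempt to outline the argument in \cite[Section~7.3]{BMT}, as you yourself acknowledge in the final sentence.

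As an outline of the BMT argument your proposal is broadly on the right track, and your treatment of the degenerate case $\om^2\tch_1(E)=0$ is correct. However, you are candid that the heart of the matter --- controlling the cross-term $\overline{B}(A,C)$ when the pieces have different tilt-slopes, and propagating this through a filtration whose intermediate quotients need not be tilt-semistable --- is left unresolved, and you defer to \cite{BMT} for it. That is the entire content of the theorem; everything else is bookkeeping. One organisational remark: invoking Proposition~\ref{coro1} here is harmless for logical dependence within this paper (its proof does not use the discriminant inequality), but it is not how \cite{BMT} proceeds, since their Corollary~7.3.2 is proved before and independently of any reflexivity statement; their argument works directly with the numerical vector $(\om^3\tch_0,\om^2\tch_1,\om\tch_2)$ and the signature of the quadratic form, without needing structural information about $H^{-1}(E)$.
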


In this section, we will investigate the tilt-stability of objects with $\overline{\Delta}_{\omega}(E)=0$. The following result gives many examples of tilt-stable objects. (Furthermore, in \cite[Proposition 7.4.2]{BMT}, they verify these objects satisfy Conjecture \ref{conj.bmt.1.3.1}, and equality holds).

\begin{proposition}\label{prop.bmt.discr0}\cite[Proposition 7.4.1]{BMT}
Let $E$ be a $\muob$-stable vector bundle on $X$ with $\overline{\Delta}_\om(E)=0$. Then $E$ is $\nuob$-stable.
\end{proposition}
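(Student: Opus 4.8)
The plan is to argue by contradiction: if $E$ were not $\nuob$-stable, a $\nuob$-destabilizing subobject would violate the rigidity imposed by $\overline{\Delta}_\om(E)=0$. I would first record that rigidity. Since $\overline{\Delta}_\om(E)=0$ and $\om^3\tch_0(E)>0$ ($E$ being locally free of positive rank), one has $\om\,\tch_2(E)=(\om^2\tch_1(E))^2/\bigl(2\,\om^3\tch_0(E)\bigr)$, so that $\nuob(E)$ becomes a definite function of $\om^2\tch_1(E)$ and $\om^3\tch_0(E)$ alone; moreover $E$ is $\muob$-semistable, so the Bogomolov--Gieseker inequality gives $\om\,\Delta(E)\ge0$, and combined with Lemma~\ref{lem.compare-discr} this forces $\om\,\Delta(E)=0$ and equality in Hodge Index for $\tch_1(E)$. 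I would treat the case $\muob(E)>0$, so that $E\in\Tob\subset\Bob$; the case $\muob(E)<0$ is dual (argue with $E[1]\in\Bob$), and $\muob(E)=0$ forces $\om\,\tch_2(E)=0$ and is easy.

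So suppose $E$ is not $\nuob$-stable. Then there is a proper nonzero $\nuob$-semistable subobject $F\subset E$ in $\Bob$ with $\nuob(F)\ge\nuob(E)$ — the maximal-slope Harder--Narasimhan factor if $E$ is not $\nuob$-semistable, or a $\nuob$-stable Jordan--H\"older subobject of slope $\nuob(E)$ otherwise. Because $E$ is a sheaf, $H^{-1}(F)\hookrightarrow H^{-1}(E)=0$, so $F$ is a sheaf, hence an object of $\Tob$; since $N:=H^{-1}(E/F)\in\Fob$ is torsion free and equals the kernel of the induced map $F\to E$, any torsion subsheaf of $F$ injects into the torsion-free sheaf $E$, so $F$ is torsion free with $\om^2\tch_1(F)>0$, and \cite[Cor.~7.3.2]{BMT} gives $\overline{\Delta}_\om(F)\ge0$. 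Set $F'=\image(F\to E)=F/N\subseteq E$, so $\om^2\tch_1(F')=\om^2\tch_1(F)-\om^2\tch_1(N)\ge\om^2\tch_1(F)>0$, using $\om^2\tch_1(N)\le0$. Now $\muob$-stability of $E$ enters: if $F'\ne E$ then $\muob(F')<\muob(E)$ whenever $E/F'$ is not supported in codimension $\ge2$; in the remaining case $E/F'$ has codimension $\ge2$, $F'$ has reflexive hull $E$, and $\overline{\Delta}_\om(F')\ge\overline{\Delta}_\om(E)=0$. The last possibility, $F'=E$ (so $E/F=N[1]$), turns $\nuob(F)\ge\nuob(E)$ into a constraint on the single sheaf $N\in\Fob$, which I would dispatch directly.

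It then remains to combine (i) $\overline{\Delta}_\om(F)\ge0$, (ii) $\nuob(F)\ge\nuob(E)$, (iii) $\om^2\tch_1(N)\le0$ with $\rk(N)\ge0$, (iv) the slope bound on $F'$ from $\muob$-stability, and (v) the rigid value of $\nuob(E)$, into a contradiction. Expanding (i) and (ii) in terms of $(\om^3\tch_0,\om^2\tch_1,\om\,\tch_2)$ of $F$, $F'$, $N$ and substituting (v), the inequalities should close: $\overline{\Delta}_\om(F)\ge0$ caps $\nuob(F)$ in terms of $\rk(F)$ and $\om^2\tch_1(F)$, while $\om^2\tch_1(F)\le\om^2\tch_1(F')$, the $\muob$-bound on $F'$, and $\rk(F)=\rk(F')+\rk(N)$ push $\nuob(F)$ strictly below $\nuob(E)$. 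In the borderline subcase $\nuob(F)=\nuob(E)$ — i.e.\ $E$ strictly $\nuob$-semistable — one checks instead that the only surviving possibility is $N=0$ with $E/F$ a $0$-dimensional sheaf, which does not destabilize since then $\nuob(E/F)=+\infty>\nuob(F)$; this last point uses the strictness in (iv), hence $\muob$-\emph{stability} rather than mere semistability — the one place stability is genuinely needed, in accordance with $\muob$-semistable bundles of zero discriminant being only $\nuob$-semistable.

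I expect the book-keeping of the previous paragraph to be the main obstacle. In particular, additivity of the discriminant alone will not suffice: in the identity $0=\overline{\Delta}_\om(E)=\overline{\Delta}_\om(F)+\overline{\Delta}_\om(E/F)+(\text{cross term})$ the cross term can be negative even when $F$ and $E/F$ are $\nuob$-semistable of different slopes (already for a direct sum of two line bundles of different degrees), so $\overline{\Delta}_\om(F)\ge0$ by itself does not preclude a destabilizer — the slope constraint coming from $\muob$-stability, which pins down only $\om^2\tch_1$ of subsheaves of $E$, must be fed in. The genuinely delicate part is tracking the auxiliary sheaf $N=H^{-1}(E/F)$ through all of this while keeping every inequality tight.
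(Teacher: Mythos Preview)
The paper does not give its own proof of this proposition: it is simply quoted from \cite[Proposition 7.4.1]{BMT} and used as a black box, so there is nothing in the present paper to compare your argument against.

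That said, a brief comment on your sketch. Your overall shape is reasonable --- pass to a $\nuob$-semistable destabiliser $F\subset E$ in $\Bob$, observe $F$ is a torsion-free sheaf in $\Tob$, invoke $\overline{\Delta}_\om(F)\ge 0$, and feed in the $\muob$-stability of $E$ via the image $F'\subseteq E$ --- but the one step on which the whole argument hinges, namely that (i)--(v) actually combine into a contradiction, is precisely the step you leave open (``the inequalities should close''; ``I expect the book-keeping \dots\ to be the main obstacle''). As you yourself note, additivity of $\overline{\Delta}_\om$ alone does not do it, and the interaction between the $\muob$-bound on $F'$ (which controls $\om^2\tch_1$ relative to rank) and the $\nuob$-inequality (which involves $\om\,\tch_2$) through the intermediary $N=H^{-1}(E/F)$ is genuinely the content of the proposition. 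So what you have is an outline with the decisive computation missing, not a proof. The argument in \cite{BMT} proceeds differently: rather than a direct inequality chase in $\Bob$, it exploits the very strong rigidity of $\muob$-semistable bundles with $\overline{\Delta}_\om=0$ (compare the finite-cover/numerically-flat technique used in the proof of Proposition~\ref{pro2} of the present paper), which sidesteps exactly the delicate book-keeping you anticipate.
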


 Now we come to the following partial converse to Proposition \ref{prop.bmt.discr0}.

\begin{theorem}\label{theorem2}
Suppose $E \in \Bob$ satisfies all of the following three conditions:
\begin{enumerate}
\item[(1)] $H^{-1}(E)$ is nonzero, torsion-free, $\muob$-stable (resp.\ $\muob$-semistable), with $\omega^2 \tch_1 (H^{-1}(E)) <0$;
\item[(2)] \label{thm2.two}$H^0(E) \in \Coh^{\leq 1}(X)$;
\item[(3)] \label{thm2.three}$\overline{\Delta}_\omega (E)=0$.
\end{enumerate}
Then $E$ is tilt-stable (resp.\ tilt-semistable) if and only if $E=H^{-1}(E)[1]$ where $H^{-1}(E)$ is a locally free sheaf.
\end{theorem}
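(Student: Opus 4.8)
The plan is to prove the nontrivial direction: assuming $E$ is tilt-semistable (resp.\ stable) under conditions (1)--(3), deduce $E = H^{-1}(E)[1]$ with $H^{-1}(E)$ locally free. (The reverse implication is immediate from Proposition \ref{prop.bmt.discr0} together with Proposition \ref{coro1} once we observe $H^{-1}(E)[1]$ is tilt-semistable iff $H^{-1}(E)$ is, and that the hypotheses force $H^{-1}(E) \in \Fob$.) First I would dispose of the torsion part $H^0(E)$. Since $H^0(E) \in \Coh^{\le 1}(X)$, its $\Coh^{\le 0}$ part sits inside $E$ as a subobject in $\Bob$ of infinite tilt-slope; because $E$ has $\nuob(E) < \infty$ (from $\omega^2 \tch_1(E) = -\omega^2\tch_1(H^{-1}(E)) > 0$), semistability forces that part to vanish unless $\nuob(E) = \infty$, which it is not. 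So I would first argue $H^0(E)$ is a pure $1$-dimensional sheaf, then show that a nonzero such $H^0(E)$ gives a quotient $E \onto H^0(E)$ in $\Bob$ with $\nuob(H^0(E))$ computable: a pure $1$-dimensional sheaf has $\omega^2\tch_1 = 0$, hence $\nuob = +\infty$. That contradicts semistability (a quotient cannot have strictly larger — or even equal, in the stable case after checking properness — slope than $E$ when $\nuob(E)<\infty$). Hence $H^0(E) = 0$ and $E = H^{-1}(E)[1]$.

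Next, writing $F := H^{-1}(E)$, I need: $F$ is $\muob$-stable (resp.\ semistable) torsion-free with $\overline{\Delta}_\omega(F) = \overline{\Delta}_\omega(E) = 0$ (discriminant is shift-invariant up to sign in $\tch_0,\tch_1$, and the quadratic form is even, so $\overline{\Delta}_\omega(F[1]) = \overline{\Delta}_\omega(F)$). By Proposition \ref{coro1}, $F = H^{-1}(E)$ is reflexive. Now the crux: I want to upgrade "reflexive with zero discriminant" to "locally free." Here I would invoke Theorem \ref{theorem3} — wait, that is stated for $B = 0$; instead I should argue directly. The natural route is: a reflexive sheaf $F$ fails to be locally free exactly on a closed subset of codimension $\ge 3$, i.e.\ at finitely many points on the threefold $X$; the obstruction is the $0$-dimensional sheaf $\EExt^2(F, \OO_X)$ (for $B=0$), or more intrinsically the cokernel $Q$ of $F \into F^{**}$ has... no — $F$ is already reflexive so $F = F^{**}$. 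Let me reconsider: I want to show a reflexive sheaf with $\overline{\Delta}_\omega(F)=0$ is locally free. By Lemma \ref{lem.compare-discr}, $0 = \overline{\Delta}_\omega(F) \ge \omega^3 (\omega \cdot \Delta(F))$, so $\omega \cdot \Delta(F) \le 0$; but for a $\muob$-semistable torsion-free sheaf the Bogomolov inequality (in Dr\'ezet's form, or as used in \cite{BMT}) gives $\omega\cdot\Delta(F) \ge 0$, forcing $\omega\cdot\Delta(F) = 0$ and equality in Hodge Index, i.e.\ $\tch_1(F)$ is numerically proportional to $\omega$.

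The main obstacle is precisely this last step: extracting local freeness from $\overline{\Delta}_\omega(F) = 0$. My plan is to use the standard fact that a reflexive sheaf $F$ on a smooth threefold is locally free away from a finite set $\Sigma$, and that $\dim_{\Cfield} H^0(\EExt^2(F,\OO_X))$ (more precisely the length of the non-locally-free locus) contributes positively to the correction term relating $\overline{\Delta}_\omega(F)$ to $\overline{\Delta}_\omega(F^{\vee\vee}) = \overline{\Delta}_\omega(F)$ — that is tautological, so instead I compare $F$ with a locally free resolution: choose a surjection $0 \to K \to P \to F \to 0$ with $P$ locally free; then $K$ is reflexive (even a second syzygy), $\mu$-semistable after a twist, and one computes $\overline{\Delta}_\omega(F)$ in terms of $\overline{\Delta}_\omega(P), \overline{\Delta}_\omega(K)$ and a nonnegative "singularity" term supported on $\Sigma$; vanishing of $\overline{\Delta}_\omega(F)$ forces $\Sigma = \varnothing$. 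Concretely I expect the cleanest argument runs through: $F$ locally free $\iff$ $\EExt^{\ge 1}(F,\OO_X) = 0$; since $F$ is reflexive these $\EExt$'s are $0$-dimensional; and a Riemann–Roch / Chern-character computation shows $\omega \cdot \Delta(F) \ge$ (length of singular locus) $\cdot(\text{positive constant})$ — combined with $\omega\cdot\Delta(F) = 0$ from above, this yields local freeness. I would flag that this is where one genuinely uses reflexivity plus zero discriminant together, and that in the non-threefold or higher-codimension singular case the analogous statement can fail; on a threefold the singular locus of a reflexive sheaf being $0$-dimensional is exactly what makes the length bound work.
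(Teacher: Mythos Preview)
Your argument has two genuine gaps, and the first one is fatal to the approach as written.

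\textbf{The destabilisation inequality is backwards.} You try to show $H^0(E)=0$ directly from tilt-(semi)stability by noting that the quotient $E\twoheadrightarrow H^0(E)$ in $\Bob$ has $\nuob(H^0(E))=+\infty$. But a quotient with $\nuob=+\infty$ never destabilises: semistability says $\nuob(E)\le\nuob(N)$ for every quotient $N$, and this is trivially satisfied when $\nuob(N)=+\infty$. Likewise, the maximal $\Coh^{\le 0}$ subsheaf of $H^0(E)$ is a subobject of the \emph{quotient} $H^0(E)$, hence a subquotient of $E$, not a subobject of $E$; you cannot feed it into the stability inequality. In fact you cannot conclude $H^0(E)=0$ from stability alone without using condition~(3): the paper first expands
\[
0=\overline{\Delta}_\omega(E)=\overline{\Delta}_\omega(F)+2\,\omega^3\tch_0(F)\cdot\omega\tch_2(H^0(E)),
\]
observes both summands are $\ge 0$ (Bogomolov--Gieseker via Lemma~\ref{lem.compare-discr}, and $H^0(E)\in\Coh^{\le 1}$), and hence both vanish. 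This simultaneously gives $H^0(E)\in\Coh^{\le 0}(X)$ and $\overline{\Delta}_\omega(F)=0$. Only \emph{after} proving $F$ is locally free does one use the $\Ext^2$-vanishing of Lemma~\ref{lemma.split} to split $E\cong F[1]\oplus H^0(E)$, and then stability (now a genuine \emph{subobject} $H^0(E)\hookrightarrow E$ of infinite slope) kills $H^0(E)$.

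\textbf{The local-freeness step is not as you describe.} Your proposed route ``$\omega\cdot\Delta(F)\ge(\text{length of singular locus})\cdot(\text{positive constant})$'' does not hold: the discriminant involves only $ch_0,ch_1,ch_2$, while for a reflexive sheaf on a threefold the non-locally-free locus is $0$-dimensional and is invisible to $ch_{\le 2}$ (there is no larger sheaf to compare with, since $F=F^{\ast\ast}$). The paper's Proposition~\ref{pro2} handles this by passing to a finite flat cover $f:Y\to X$ and twisting so that $(f^\ast\omega)^2\tch_1$ and $(f^\ast\omega)\tch_2$ both vanish, then invoking Langer's result \cite[Proposition~5.1]{Lan09} on numerically flat semistable sheaves; this is a genuinely nontrivial input, not a Riemann--Roch bookkeeping argument.
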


\begin{remark}\label{remark2}
Note that, any polynomial stable complex on $X$ that is PT-semistable or dual-PT-semistable (see \cite{Lo3}) of positive degree satisfies conditions (1) and (2) in Theorem \ref{theorem2}.  However, the theorem says that, under the assumption $\overline{\Delta}_\omega =0$, a (dual-)PT-semistable object cannot be a genuine complex if it is to be tilt-semistable.
\end{remark}

We break up the proof of Theorem \ref{theorem2} into a couple of intermediate results.

\begin{proposition}\label{pro2}
Let $F$ be a $\muob$-semistable reflexive sheaf on $X$ such that $\overline{\Delta}_\omega (F)=0$.  Then $F$ is a locally free sheaf.
\end{proposition}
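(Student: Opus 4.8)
The plan is to show that a $\muob$-semistable reflexive sheaf $F$ with $\overline{\Delta}_\om(F)=0$ has no singularities, by combining the numerical constraint $\overline{\Delta}_\om(F)=0$ with the codimension bound on the non-locally-free locus of a reflexive sheaf. First I would record that for a reflexive sheaf on a smooth threefold the singular locus (where $F$ fails to be locally free) has codimension at least three, hence is zero-dimensional; so it suffices to show this locus is empty. The strategy is to pass to $F^{\ast\ast}=F$ and use restriction to a general complete intersection surface, or better, directly use Lemma \ref{lem.compare-discr}.

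Here is the main line of argument I would pursue. By Lemma \ref{lem.compare-discr}, $0 = \overline{\Delta}_\om(F) \geq \om^3 \cdot (\om\Delta(F))$, and since $\om$ is ample and $\om^3 > 0$, this forces $\om\Delta(F) \leq 0$. On the other hand, a $\muob$-semistable (equivalently $\mu_\om$-semistable) sheaf satisfies the Bogomolov inequality in the form $\om\Delta(F) \geq 0$ — this is the classical Bogomolov inequality for semistable sheaves on a threefold polarized by $\om$, obtained by restricting to a general surface in $|m\om|$ for $m \gg 0$ (by Mehta–Ramanathan the restriction stays semistable) and applying Bogomolov's inequality there, or alternatively it already follows from the cited $\overline{\Delta}_\om \geq 0$ result applied appropriately. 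Combining, $\om\Delta(F) = 0$ and moreover equality holds in the Hodge Index step of Lemma \ref{lem.compare-discr}, i.e.\ $(\om^2\tch_1(F))^2 = \om^3 \cdot (\om \tch_1(F)^2)$.

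Now I would exploit the vanishing $\Delta(F) \cdot \om = 0$ together with $\muob$-semistability to conclude $F$ is locally free. The cleanest route: restrict $F$ to a general smooth surface $S \in |m\om|$, $m\gg 0$. Then $F|_S$ is reflexive, hence locally free on the surface $S$ (reflexive sheaves on smooth surfaces are locally free), and it is $\mu$-semistable with $\Delta(F|_S) = 0$ after intersecting with $\om|_S$; by Bogomolov–Gieseker for surfaces, a $\mu$-semistable bundle on $S$ with vanishing discriminant (against the polarization) is, up to the relevant numerical equality, as restrictive as possible — and crucially, the second Chern class contribution of the singular points of $F$ (each contributing positively to $c_2$ under restriction, since a general $S$ meets the zero-dimensional singular locus) would strictly increase $\Delta$. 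More precisely, if $F$ has a singular point $p$, then for $S$ through $p$ the local contribution makes $\om\Delta(F) > 0$ strictly; comparing with $\om\Delta(F)=0$ gives a contradiction. Hence $F$ is locally free.

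The main obstacle I anticipate is making the last step — "singular points of a reflexive sheaf strictly increase the discriminant" — precise, since one must be careful that restricting to a surface through a bad point of $F$ need not keep $F|_S$ reflexive in a way that cleanly separates the point's contribution; the honest fix is to instead argue on $X$ directly: use that $0 \to F \to F^{\ast\ast} \to F^{\ast\ast}/F \to 0$ is trivial here (as $F$ is already reflexive) and instead compare $F$ with its restriction, or invoke that for a reflexive non-locally-free $F$ the double dual of $F|_S$ differs from $F|_S$ by a nonzero length at the singular points, forcing a strict Bogomolov inequality on $S$ via the surface-level statement that $\mu$-semistable torsion-free (non-locally-free) sheaves on a surface satisfy $\Delta > 0$ strictly unless locally free, combined with $\Delta(F|_S)=0$. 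I would also need to double-check the $B=0$ versus general $B$ bookkeeping, but since $\muob$-semistability equals $\mu_\om$-semistability and $\Delta$ is independent of $B$, this causes no trouble. Once local freeness is known on a general $S$ through each would-be singular point, the codimension-$3$ description of the singular locus finishes the proof.
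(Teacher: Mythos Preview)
Your approach diverges from the paper's and has a gap at precisely the point you flag as the main obstacle. To extract a contradiction from a hypothetical singular point $p$ of $F$, you need a surface $S \in |m\omega|$ that both (i) passes through $p$ and (ii) has $F|_S$ $\mu$-semistable. But the singular locus of a reflexive sheaf on a smooth threefold is zero-dimensional, and a \emph{general} member of $|m\omega|$---the only kind Mehta--Ramanathan says anything about---avoids any prescribed finite set of points. Your suggested fixes (comparing $(F|_S)^{\ast\ast}$ with $F|_S$, invoking the strict surface Bogomolov inequality for non-locally-free torsion-free sheaves) are correct \emph{once} $F|_S$ is known to be semistable, but that is exactly what is missing for an $S$ chosen through $p$. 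Note also that on $X$ itself the zero-dimensional singular locus contributes nothing to $ch_0, ch_1, ch_2$ and is therefore invisible to $\omega\Delta(F)$; the ``strict increase'' you describe only materialises after restriction, so the sentence ``for $S$ through $p$ the local contribution makes $\omega\Delta(F)>0$'' cannot be read as a statement on $X$.

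The paper sidesteps restriction entirely. It passes to a finite flat cover $f:Y\to X$ (Bloch--Gieseker, via \cite{Laz}) and twists by a line bundle with $c_1$ proportional to $f^\ast\omega$ so as to arrange $(f^\ast\omega)^2\tch_1=0$; since $\overline{\Delta}_{f^\ast\omega}$ is invariant under such a twist, the hypothesis $\overline{\Delta}_\omega(F)=0$ then forces $(f^\ast\omega)\tch_2=0$ as well. At that point it invokes Langer's theorem \cite[Proposition~5.1]{Lan09}, which says a $\mu$-semistable reflexive sheaf with these two numerical vanishings is locally free; faithfully flat descent brings the conclusion back to $X$. That result of Langer is the genuine input here and is not something one recovers from a surface-restriction argument alone.
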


\begin{proof}
The proof is largely based on that of \cite[Proposition 7.4.2]{BMT}. By \cite[Theorem 4.1.10]{Laz}, we can find a pair $(f,L)$ where $f$ is a morphism $Y \to X$ that is finite, surjective and flat, with $Y$  a smooth projective variety, and a line bundle $L$ on $Y$ such that $(f^\ast \omega)^2 \tch_1 (L \otimes f^\ast F)=0$.

Since $f$ is flat and both $X,Y$ are smooth, $L \otimes f^\ast F$ is reflexive by \cite[Proposition 1.8]{SRS}.  On the other hand, by choosing $L$ above so that $c_1(L)$ is a rational multiple of $f^\ast \omega$, we have $\overline{\Delta}_{f^\ast\omega} (L \otimes f^\ast F) = \overline{\Delta}_{f^\ast\omega} (f^\ast F)=0$ because the discriminant $\overline{\Delta}_{f^\ast \omega}$ is invariant under tensoring by a line bundle whose $c_1$ is proportional to $f^\ast \omega$, and $\overline{\Delta}_\omega (F)=0$.  Hence $(f^\ast \omega)\tch_2 (L \otimes f^\ast F)=0$.  Passing to another finite cover of the form above, we can assume that $B$ is the divisor class of a line bundle $M$ on $Y$, and $ch (M \otimes L \otimes f^\ast F) = \tch(L \otimes f^\ast F)$.  Now, $f^\ast F$ is $\mu_{f^\ast \omega, f^\ast B}$-semistable since $f$ is a finite morphism.  Hence $M \otimes L \otimes f^\ast F$ is $\mu_{f^\ast \omega, f^\ast B}$-semistable (and equivalently, $\mu_{f^\ast\om}$-semistable) with vanishing $(f^\ast \omega)^2 ch_1$ and $(f^\ast \omega)ch_2$.  Thus, by \cite[Proposition 5.1]{Lan09}, $M \otimes L \otimes f^\ast F$ is locally free, i.e.\ $f^\ast F$ is locally free.  Since $f$ is surjective and flat, it is faithfully flat, and so $F$ itself is locally free.
\end{proof}

\begin{lemma}\label{lemma.split}
If $E \in \Bob$ with $H^{-1}(E)$ a vector bundle, and $H^0(E)\in \Coh^{\leq 0}(X)$, then $E\cong H^{-1}(E)[1] \oplus H^0(E)$.  If  $E$ further satisfies $\Hom(\Coh^{\leq 0}(X), E)=0$ or $E$ is $\nuob$-stable, then $H^0(E)=0$, in which case $E\simeq H^{-1}(E)[1]$ is a shift of a vector bundle.
\end{lemma}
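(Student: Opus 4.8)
The plan is to first establish the splitting $E \cong H^{-1}(E)[1] \oplus H^0(E)$, then use stability or the $\Hom$-vanishing hypothesis to kill the $H^0$ summand. For the splitting, recall that any object $E \in \Bob$ sits in a short exact sequence $0 \to H^{-1}(E)[1] \to E \to H^0(E) \to 0$ in $\Bob$ (this is the standard sequence coming from the t-structure, valid because $H^{-1}(E)[1] \in \Fob[1] \subset \Bob$ and $H^0(E) \in \Tob \subset \Bob$ when $H^0(E) \in \Coh^{\leq 0}(X)$, as such sheaves are torsion hence in $\Tob$). This sequence is classified by an element of $\Ext^1_{\Bob}(H^0(E), H^{-1}(E)[1])$, which by the definition of $\Bob$ as a tilt and standard computations equals $\Hom_{D^b(X)}(H^0(E), H^{-1}(E)[2])$. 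So I would compute this group: since $H^{-1}(E)$ is a vector bundle, $\Hom_{D^b(X)}(H^0(E), H^{-1}(E)[2]) \cong \Hom_{D^b(X)}(H^0(E) \otimes (H^{-1}(E))^\vee, \OO_X[2])$, and by Serre duality this is dual to $\Hom_{D^b(X)}(\OO_X, H^0(E) \otimes (H^{-1}(E))^\vee \otimes \omega_X[1]) = H^1(X, H^0(E) \otimes (H^{-1}(E))^\vee \otimes \omega_X)$. Since $H^0(E) \in \Coh^{\leq 0}(X)$ is supported in dimension zero, the sheaf $H^0(E) \otimes (H^{-1}(E))^\vee \otimes \omega_X$ is again supported in dimension zero, so all its higher cohomology vanishes; hence the $\Ext^1$ group is zero and the sequence splits, giving $E \cong H^{-1}(E)[1] \oplus H^0(E)$.

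For the second part, suppose $\Hom(\Coh^{\leq 0}(X), E) = 0$. Since $H^0(E) \in \Coh^{\leq 0}(X)$ is a direct summand of $E$ in $\Bob$, the identity map on $H^0(E)$ composed with the inclusion into $E$ gives a nonzero element of $\Hom(H^0(E), E) \subseteq \Hom(\Coh^{\leq 0}(X), E)$ unless $H^0(E) = 0$; so $H^0(E) = 0$ and $E \cong H^{-1}(E)[1]$. If instead $E$ is $\nuob$-stable: if $H^0(E) \neq 0$, then it is a nonzero 0-dimensional sheaf, hence a nonzero subobject of $E$ in $\Bob$ (it is a direct summand), and $\nuob(H^0(E)) = +\infty$ since $\omega^2 \tch_1(H^0(E)) = 0$; but a direct summand that is a proper nonzero subobject with slope $+\infty$ contradicts stability — more directly, a $\nuob$-stable object is indecomposable, so the splitting forces one of the two summands to vanish, and since $H^{-1}(E)$ is nonzero (it is a vector bundle; if it were zero then $E = H^0(E)$ would still need checking, but the interesting case has $H^{-1}(E) \ne 0$), we get $H^0(E) = 0$. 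Either way $E \simeq H^{-1}(E)[1]$.

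I expect the main technical point to be the identification $\Ext^1_{\Bob}(H^0(E), H^{-1}(E)[1]) \cong \Hom_{D^b(X)}(H^0(E), H^{-1}(E)[2])$ and the subsequent vanishing; the vanishing itself is easy once one observes the relevant sheaf is $0$-dimensional, but one must be careful that no $\Hom_{\Bob}$ versus $\Hom_{D^b(X)}$ discrepancy arises. An alternative, perhaps cleaner, route that avoids the derived-category computation: show directly that the sequence $0 \to H^{-1}(E)[1] \to E \to H^0(E) \to 0$ splits by producing a splitting of $H^0(E) \to 0$ — but this still amounts to the same $\Ext^1$ vanishing, so I would go with the Serre-duality computation above. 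A further subtlety to handle is the degenerate case where $H^{-1}(E)$ might be zero; the hypothesis says $H^{-1}(E)$ is a vector bundle, which I will take to include the possibility of rank zero only if one wishes, but in the intended application $H^{-1}(E) \ne 0$, and the argument for killing $H^0(E)$ works regardless.
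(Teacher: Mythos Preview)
Your proposal is correct and follows essentially the same route as the paper: both use the canonical sequence $0 \to H^{-1}(E)[1] \to E \to H^0(E) \to 0$ in $\Bob$, show the classifying $\Ext^1$ vanishes via Serre duality (landing on $H^1$ of a $0$-dimensional sheaf), and then kill $H^0(E)$ using either the $\Hom$-vanishing or stability hypothesis. Your worry about $\Ext^1_{\Bob}$ versus $\Hom_{D^b(X)}(-,-[1])$ is harmless, since for the heart of a bounded t-structure these always agree.
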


\begin{proof}
Let $F=H^{-1}(E)$ and $T=H^0(E)$. We have $\Ext^1(T, F[1])=\Ext^2(T, F)=$ \newline $\Ext^1(F, T \otimes \om_X)=H^1(X, F^* \otimes T \otimes \om_X)$, which is zero since $T \in \Coh^{\leq 0}(X)$. From the exact sequence $F[1] \to E \to T$ in $\Bob$ we conclude $E\simeq F[1] \oplus T$. If $E$ is $\nuob$-stable, then $T=0$ (otherwise $T$ would be a $\nuob$-destabilizing object of $E$).
\end{proof}

\begin{proof}[Proof of Theorem \ref{theorem2}]
If $E=H^{-1}(E)[1]$ where $H^{-1}(E)$ is a $\muob$-stable (resp.\ $\muob$-semistable) locally free sheaf satisfying (1) through (3), then the result is \cite[Proposition 7.4.1]{BMT}.  (Note that, \cite[Proposition 7.4.1]{BMT} still holds if we replace each occurence of `stable' by `semistable' in its statement.)

 Now, assume $E$ satisfies (1) through (3) and is tilt-semistable. Let $F=H^{-1}(E)$. Then by Proposition \ref{coro1}, $F$ is reflexive. The condition $H^0(E) \in \Coh^{\leq 1}(X)$ implies $\om^3\tch_0(H^0(E))=\om^2\tch_1(H^0(E))=0$, and hence the condition $\overline{\Delta}_\omega (E)=0$ can be rewritten as
\begin{equation}\label{eqn.split.discrim}
\overline{\Delta}_\om(F) + 2\om^3\tch_0(F)\om\tch_2(H^0(E)) =0.
\end{equation}

The Bogomolov-Gieseker inequality says $\om\Delta(F) \geq 0$, and hence by Lemma \ref{lem.compare-discr}, we have $\overline{\Delta}_\om(F) \geq 0$. Since both terms  $\overline{\Delta}_\om(F)$ and $2\om^3\tch_0(F)\om\tch_2(H^0(E))$ are nonnegative, Equation \ref{eqn.split.discrim} implies they must both by zero. So $\om\tch_2(H^0(E))=0$, and $H^0(E) \in \Coh^{\leq 0}(E)$.  Since $\overline{\Delta}_\omega (F)=0$, we have $F$ is locally free by Proposition \ref{pro2}. By Lemma \ref{lemma.split}, we can conclude $E \simeq F[1]$.
\end{proof}

Using Theorem \ref{theorem2}, we can also prove the following result on $\mu_\omega$-semistable sheaves of zero discriminant:

\begin{theorem}\label{theorem3}
Suppose $B=0$.  Let $F$ be a $\mu_\omega$-semistable torsion-free sheaf with $\overline{\Delta}_\omega (F)=0$.  Then $\EExt^1 (F,\OO_X)$ is zero, and $F^\ast$ is locally free.  Therefore, $F$ is locally free if and only if the 0-dimensional sheaf $\EExt^2 (F,\OO_X)$ is zero.
\end{theorem}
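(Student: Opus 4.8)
The plan is to reduce the statement to Theorem \ref{theorem2} by examining the standard spectral-sequence/low-degree exact sequences that compute the derived dual of $F$. First I would record what the hypotheses give us: since $F$ is $\mu_\omega$-semistable and torsion-free, its double dual $F^{\ast\ast}$ is a $\mu_\omega$-semistable reflexive sheaf (semistability of the reflexive hull follows from Lemma \ref{lemma.doubledual}, since $F \subset F^{\ast\ast}$ agree in codimension one), and $\overline{\Delta}_\omega (F^{\ast\ast}) = \overline{\Delta}_\omega(F) = 0$ because $F$ and $F^{\ast\ast}$ have the same $\tch_0$ and $\tch_1$ and the same $\om \tch_2$ up to a $0$-dimensional correction — one must check that $\om \tch_2$ is unchanged, which holds because $F^{\ast\ast}/F$ is supported in dimension $\leq 1$. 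Hence by Proposition \ref{pro2}, $F^{\ast\ast}$ is locally free. Dualizing the canonical sequence $0 \to F \to F^{\ast\ast} \to T \to 0$ with $T \in \Coh^{\leq 1}(X)$, and using that $F^{\ast\ast}$ is locally free (so $\EExt^i(F^{\ast\ast},\OO_X)=0$ for $i>0$), the long exact sequence of $\EExt$ sheaves gives $\EExt^1(F,\OO_X) \cong \EExt^2(T,\OO_X)$ and $\EExt^2(F,\OO_X) \cong \EExt^3(T,\OO_X)$ (with $F^\ast \cong (F^{\ast\ast})^\ast$ locally free, giving the second assertion immediately). So it remains to show $\EExt^2(T,\OO_X) = 0$; equivalently, by Serre duality / local duality on the smooth threefold $X$, that $T$ has no subsheaf supported in dimension $\leq 1$ whose local dual in codimension $2$ survives — more precisely $\EExt^2(T,\OO_X)$ is supported in dimension $\leq 1$ and is dual to the maximal dimension-$\leq 1$ part of $T$.

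The key step is therefore to prove $T$ is supported in dimension $0$ (which forces $\EExt^2(T,\OO_X)=0$, as $\EExt^2(\,\cdot\,,\OO_X)$ vanishes on $\Coh^{\leq 0}(X)$). Here I would invoke Theorem \ref{theorem2}. Consider the object $E := F^\ast[1]$? No — rather, I would instead apply Theorem \ref{theorem2} to an object built from $F$ itself: take $E$ to be the two-term complex with $H^{-1}(E) = F^{\ast\ast}$ and $H^0(E) = T$, namely the cone construction coming from $0 \to F \to F^{\ast\ast} \to T \to 0$, so that $E \simeq F[1]$ in $D^b(X)$. Then $H^{-1}(E) = F^{\ast\ast}$ is nonzero torsion-free $\mu_\omega$-(semi)stable; if additionally $\om^2\tch_1(F^{\ast\ast}) < 0$ we would need $F \in \Fob$, which we may arrange by twisting (the discriminant, local-freeness of the dual, and the support of $T$ are all unaffected by tensoring with a line bundle proportional to $\om$ after passing to a suitable finite flat cover as in the proof of Proposition \ref{pro2}, or more simply by noting the statement is insensitive to such a twist). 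With (1) and $H^0(E) = T \in \Coh^{\leq 1}(X)$ giving (2), and $\overline{\Delta}_\omega(E) = \overline{\Delta}_\omega(F) = 0$ giving (3), Theorem \ref{theorem2} applies: $F[1] = E$ is tilt-semistable if and only if $T = 0$ and $F^{\ast\ast}$ is locally free. But we are NOT assuming $F[1]$ is tilt-semistable — so this direction does not directly close the argument.

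Thus the cleaner route, and the one I would actually carry out, is to repeat the relevant half of the proof of Theorem \ref{theorem2} directly: the computation there shows that for $E \simeq F[1]$ with $F^{\ast\ast}$ locally free and $\overline{\Delta}_\omega(E)=0$, the equation $\overline{\Delta}_\om(F^{\ast\ast}) + 2\om^3\tch_0(F^{\ast\ast})\,\om\tch_2(T) = 0$ together with $\overline{\Delta}_\om(F^{\ast\ast}) \geq 0$ (Bogomolov–Gieseker plus Lemma \ref{lem.compare-discr}) and $\om\tch_2(T) \geq 0$ (true for any $T \in \Coh^{\leq 1}(X)$, since its support curve meets the ample $\om$ nonnegatively) forces $\om\tch_2(T) = 0$, hence $T \in \Coh^{\leq 0}(X)$. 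Then $\EExt^1(F,\OO_X) \cong \EExt^2(T,\OO_X) = 0$ and $\EExt^2(F,\OO_X) \cong \EExt^3(T,\OO_X)$; since $T$ is $0$-dimensional, $\EExt^3(T,\OO_X) = 0$ as well by local duality, \emph{unless} — wait, for a $0$-dimensional sheaf $T$ on a threefold, $\EExt^3(T,\OO_X)$ is the nonzero $0$-dimensional dual. So in fact $\EExt^2(F,\OO_X) \cong \EExt^3(T,\OO_X) \cong T^{\text{dual}}$ is $0$-dimensional, consistent with the statement; and $F$ is locally free iff $T = 0$ iff this $0$-dimensional sheaf vanishes. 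The main obstacle is the bookkeeping: correctly relating $\EExt^i(F,\OO_X)$ to the pieces of $T$ via the dualized short exact sequence while keeping track of which sheaves vanish, and justifying $\overline{\Delta}_\omega(F) = \overline{\Delta}_\omega(F^{\ast\ast})$ and $\om\tch_2(T) \geq 0$ — all routine but requiring care with the codimension filtration of $T$.
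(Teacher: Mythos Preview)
Your final argument is correct and is actually more direct than the paper's. The paper twists so that $\omega^2 ch_1(F)>0$, then passes to the \emph{dual} side: it uses Lemma~\ref{lemma12} to see that all $\EExt^i(F,\OO_X)$ for $i>0$ are $0$-dimensional, deduces $\overline{\Delta}_\omega(F^\ast)=0$ and hence (via Proposition~\ref{pro2}) that $F^\ast$ is locally free, then shows $E:=(\tau^{\leq 1}F^\vee)[1]$ is tilt-semistable by Proposition~\ref{lemma11}, and finally invokes Theorem~\ref{theorem2} to force $H^0(E)=\EExt^1(F,\OO_X)$ to vanish. You instead stay on the double-dual side: from $0=\overline{\Delta}_\omega(F)=\overline{\Delta}_\omega(F^{\ast\ast})+2\omega^3\tch_0(F)\,\omega\tch_2(T)$ with both terms nonnegative you get $\overline{\Delta}_\omega(F^{\ast\ast})=0$ (so $F^{\ast\ast}$ is locally free by Proposition~\ref{pro2}) and $T\in\Coh^{\leq 0}(X)$ in one stroke, and then the dualised sequence plus $\EExt^2(T,\OO_X)=0$ for $T$ zero-dimensional finishes it. Your route bypasses tilt-semistability entirely and never needs Theorem~\ref{theorem2} as a black box; the paper's route has the advantage of exhibiting the result as an application of the tilt-stability machinery being developed.

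One correction to your write-up: the claim in your first paragraph that ``$\omega\tch_2$ is unchanged because $F^{\ast\ast}/F$ is supported in dimension $\leq 1$'' is false as stated---a pure $1$-dimensional sheaf has strictly positive $\omega\tch_2$. You cannot assert $\overline{\Delta}_\omega(F^{\ast\ast})=\overline{\Delta}_\omega(F)$ \emph{before} running the discriminant decomposition; rather, that equality is a \emph{consequence} of the nonnegativity argument in your final paragraph. So reorder: first write down $\overline{\Delta}_\omega(F)=\overline{\Delta}_\omega(F^{\ast\ast})+2\omega^3 r\,\omega\tch_2(T)$, then conclude both vanish, and only then apply Proposition~\ref{pro2} and dualise. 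With that reordering your argument is clean.
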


To prove Theorem \ref{theorem3}, we first note:

\begin{lemma}\label{lemma12}
Suppose $B=0$. If $F$ is a $\mu_\omega$-semistable torsion-free sheaf on $X$ with $\overline{\Delta}_\omega (F)=0$, then $F$ must be locally free outside a codimension-3 locus.
\end{lemma}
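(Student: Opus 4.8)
The plan is to reduce to the known structure theory of slope-semistable sheaves on the complement of the singularity locus and then use a codimension count. First I would recall that, since $F$ is torsion-free on a smooth threefold, the locus $W$ where $F$ fails to be locally free is the singular locus of $F$, and $\codimension W \geq 2$. The goal is to show that in fact every component of $W$ has codimension $3$, i.e.\ $W$ is at most a finite set of points. Suppose for contradiction that $W$ contains an irreducible component $W_0$ of codimension $2$ (a curve), and let $\eta$ be its generic point. Localizing at $\eta$, we get a two-dimensional regular local ring, and the assumption is that $F_\eta$ is not free over $\OO_{X,\eta}$.

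Next I would pass to a surface section or, more cleanly, use the argument already deployed in the proof of Proposition \ref{pro2}: by \cite[Theorem 4.1.10]{Laz} find a finite, surjective, flat map $f : Y \to X$ with $Y$ smooth projective and a line bundle $L$ on $Y$ with $c_1(L)$ proportional to $f^\ast\omega$ and $(f^\ast\omega)^2\tch_1(L\otimes f^\ast F) = 0$; since $f$ is flat and $B=0$, $L\otimes f^\ast F$ is again torsion-free, $\mu_{f^\ast\omega}$-semistable, and has vanishing discriminant $\overline{\Delta}_{f^\ast\omega}$, hence vanishing $(f^\ast\omega)^2 ch_1$ and $(f^\ast\omega)ch_2$, i.e.\ $\om'\Delta(L \otimes f^\ast F) = 0$ where $\om' = f^\ast\om$, so by the Hodge Index Theorem (Lemma \ref{lem.compare-discr}) and the Bogomolov inequality this forces $\Delta(L\otimes f^\ast F)$ to be numerically trivial against $\om'$, and in particular $\om'\Delta = 0$. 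Since $f$ is faithfully flat, $F$ is locally free at a point iff $L\otimes f^\ast F$ is locally free at the preimage, so it suffices to prove the statement for $G := L\otimes f^\ast F$ on $Y$.

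Now restrict $G$ to a general complete-intersection surface $S \subset Y$ cut out by general members of $|m\om'|$ for $m \gg 0$. By Mehta-Ramanathan, $G|_S$ is $\om'|_S$-semistable, it is still torsion-free (generality of $S$), and $\Delta(G|_S)\cdot (\om'|_S)=0$, hence by Bogomolov on the surface $S$ we get $\Delta(G|_S) = 0$ as well (or at least $\int_S \Delta(G|_S) = 0$). A semistable torsion-free sheaf on a smooth surface with $\int_S \Delta = 0$ must be locally free — for instance because its double dual $(G|_S)^{\ast\ast}$ is a semistable bundle with the same $ch_0, ch_1$ and $ch_2 \geq ch_2(G|_S)$ with equality iff $G|_S$ is already reflexive $=$ locally free, and $\Delta = 0$ pins down $ch_2((G|_S)^{\ast\ast})$. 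Since a general such $S$ meets $W_0$ (a surface meets a curve on a threefold), $G|_S$ fails to be locally free along $S \cap W_0 \neq \emptyset$, a contradiction. Hence $W$ has no codimension-$2$ component, so $\codimension W \geq 3$, which is the claim.

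The main obstacle I anticipate is the bookkeeping around \emph{which} discriminant vanishes after restriction: $\overline{\Delta}_\om(F) = 0$ only gives $\om\Delta(F) = 0$ via Lemma \ref{lem.compare-discr} together with the Hodge Index Theorem being an equality, which in turn forces $\om\tch_1(F)^2$ and the rest to line up; propagating this through the finite cover and then through the Mehta-Ramanathan restriction to a surface while keeping track of torsion-freeness (as opposed to merely reflexivity) of the restriction is the delicate part. An alternative that sidesteps the surface restriction entirely — and is probably what the authors do — is to invoke \cite[Proposition 5.1]{Lan09} directly on $M \otimes L \otimes f^\ast F$ after a further cover making $B$ integral: that proposition states a $\mu$-semistable sheaf with $\om^2 ch_1 = \om ch_2 = 0$ is locally free \emph{outside codimension $3$} (the full local-freeness needing reflexivity), giving the lemma immediately once torsion-freeness and the vanishing of the two intersection numbers are in hand.
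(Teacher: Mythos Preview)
Your approach is correct but far more elaborate than the paper's. The paper gives a three-line argument that never leaves $X$: if the singularity locus of $F$ had a codimension-$2$ component, then the torsion sheaf $F^{\ast\ast}/F$ would be supported in dimension $\geq 1$, hence $\omega\, ch_2(F^{\ast\ast}/F) > 0$. Since $ch_0$ and $ch_1$ agree for $F$ and $F^{\ast\ast}$, this gives $\overline{\Delta}_\omega(F^{\ast\ast}) < \overline{\Delta}_\omega(F) = 0$, contradicting Lemma \ref{lem.compare-discr} together with the usual Bogomolov--Gieseker inequality for the $\mu_\omega$-semistable sheaf $F^{\ast\ast}$.

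Notice that you actually \emph{use} this double-dual trick yourself, but only after passing to a finite cover and restricting to a surface: your argument that a semistable torsion-free sheaf on $S$ with $\int_S \Delta = 0$ is locally free is exactly the comparison of $ch_2(G|_S)$ with $ch_2((G|_S)^{\ast\ast})$. The point is that this comparison already works on the threefold, with $\overline{\Delta}_\omega$ in place of $\int_S \Delta$, so the finite cover and the Mehta--Ramanathan restriction are both unnecessary detours. (In fact, even within your framework the finite cover is superfluous: $\overline{\Delta}_\omega(F) = 0$ together with Lemma \ref{lem.compare-discr} and Bogomolov already gives $\omega\Delta(F) = 0$ on $X$ itself.) Your alternative guess --- that the authors invoke \cite[Proposition 5.1]{Lan09} --- is also off: that result is used in the proof of Proposition \ref{pro2}, not here.
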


\begin{proof}
Suppose the singularity locus of $F$ has codimension 2.  Then $\tch_2(F^{\ast \ast}/F) = ch_2 (F^{\ast \ast}/F) >0$, implying $\overline{\Delta}_\omega (F^{\ast \ast}) < \overline{\Delta}_\omega (F) =0$, which is a contradiction by  Lemma \ref{lem.compare-discr} and the usual Bogomolov-Gieseker inequality for $\mu_\omega$-semistable sheaves.  Hence the singularity locus of $F$ has codimension at least 3.
\end{proof}

%
%
%
%

\begin{lemma}\label{lemma16}
Suppose $B=0$.  Suppose $F$ is a $\mu_{\omega}$-semistable (resp.\ $\mu_\omega$-stable) torsion-free sheaf, such that $\omega^2 ch_1 (F)>0$ and $\overline{\Delta}_\omega (F)=0$.  Then $(\tau^{\leq 1}F^\vee) [1]$ is a $\nu_{\omega,0}$-semistable (resp.\ $\nu_{\omega,0}$-stable) object.
\end{lemma}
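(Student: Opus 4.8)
The plan is to show that the truncation collapses, i.e.\ that $(\tau^{\leq 1}F^\vee)[1]=F^\ast[1]$, and then to quote Proposition~\ref{prop.bmt.discr0}. Write $F^\vee=R\HHom(F,\OO_X)$, so that $H^0(F^\vee)=F^\ast$, $H^1(F^\vee)=\EExt^1(F,\OO_X)$, $H^2(F^\vee)=\EExt^2(F,\OO_X)$, and $H^i(F^\vee)=0$ for all other $i$, the last point because a torsion-free sheaf on a smooth threefold has homological dimension at most $2$. By Lemma~\ref{lemma12} the locus where $F$ fails to be locally free --- which is exactly the support of $\bigoplus_{i\geq 1}\EExt^i(F,\OO_X)$ --- has codimension at least $3$, so $\EExt^1(F,\OO_X)$ and $\EExt^2(F,\OO_X)$ are $0$-dimensional sheaves.

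First I would record the Chern character bookkeeping. Since $H^{\geq 1}(F^\vee)$ is $0$-dimensional it contributes nothing to $ch_{\leq 2}$, and $ch_i(F^\vee)=(-1)^i ch_i(F)$; hence $ch_0(F^\ast)=\rk F$, $ch_1(F^\ast)=-ch_1(F)$, $ch_2(F^\ast)=ch_2(F)$. In particular $\mu_\omega(F^\ast)=-\mu_\omega(F)<0$ (using $\omega^2 ch_1(F)>0$) and $\overline{\Delta}_\omega(F^\ast)=\overline{\Delta}_\omega(F)=0$. As the dual of a $\mu_\omega$-(semi)stable torsion-free sheaf, $F^\ast$ is reflexive and $\mu_\omega$-(semi)stable, so Proposition~\ref{pro2} applies and $F^\ast$ is locally free.

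Next I would deduce $\EExt^1(F,\OO_X)=0$. Since $F^\ast$, hence $F^{\ast\ast}$, is locally free, the canonical sequence $0\to F\to F^{\ast\ast}\to Q\to 0$ has $Q$ supported in codimension $\geq 3$ by Lemma~\ref{lemma12}, i.e.\ $Q$ is $0$-dimensional. Applying $R\HHom(-,\OO_X)$ and using $\EExt^i(F^{\ast\ast},\OO_X)=0$ for $i\geq 1$ gives isomorphisms $\EExt^i(F,\OO_X)\cong\EExt^{i+1}(Q,\OO_X)$ for $i\geq 1$; since $Q$ is $0$-dimensional, $\EExt^j(Q,\OO_X)=0$ for $j\neq 3$, so $\EExt^1(F,\OO_X)\cong\EExt^2(Q,\OO_X)=0$. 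Thus $H^1(F^\vee)=0$, so $\tau^{\leq 1}F^\vee$ is the sheaf $F^\ast$ placed in degree $0$ and $(\tau^{\leq 1}F^\vee)[1]=F^\ast[1]$. Finally, $F^\ast$ is a $\mu_\omega$-(semi)stable locally free sheaf with $\overline{\Delta}_\omega(F^\ast)=0$ and $\mu_\omega(F^\ast)<0$, so $F^\ast\in\mathcal F_{\omega,0}$ and $F^\ast[1]\in\mathcal B_{\omega,0}$; Proposition~\ref{prop.bmt.discr0}, applied in its semistable version (cf.\ the proof of Theorem~\ref{theorem2}) to the bundle $F^\ast$ of negative slope so that the resulting tilt-(semi)stable object is the shift $F^\ast[1]$, then gives that $F^\ast[1]$ is $\nu_{\omega,0}$-(semi)stable, which is the assertion.

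The step I expect to be the crux is the vanishing $\EExt^1(F,\OO_X)=0$: it is precisely what lets $(\tau^{\leq 1}F^\vee)[1]$ collapse to $F^\ast[1]$, for if $\EExt^1(F,\OO_X)\neq 0$ then by Lemma~\ref{lemma.split} (with $H^{-1}=F^\ast$ locally free and $H^0=\EExt^1(F,\OO_X)\in\Coh^{\leq 0}(X)$) the object $(\tau^{\leq 1}F^\vee)[1]$ would split off a nonzero $0$-dimensional summand and fail to be tilt-semistable. That vanishing rests in turn on knowing $F^\ast$ is locally free, which is where Proposition~\ref{pro2} and the discriminant computation do the real work; a minor additional point, needed only in the stable case, is that the dual of a $\mu_\omega$-stable sheaf is again $\mu_\omega$-stable.
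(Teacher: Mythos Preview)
Your proof is correct but takes a genuinely different route from the paper's. Both arguments agree up through the point where $F^\ast$ is shown to be locally free via Proposition~\ref{pro2}. From there the paper does \emph{not} prove $\EExt^1(F,\OO_X)=0$; instead it keeps the short exact sequence
\[
0 \to F^\ast[1] \to (\tau^{\leq 1}F^\vee)[1] \to \EExt^1(F,\OO_X) \to 0
\]
in $\mathcal B_{\omega,0}$, checks $\Hom(\Coh^{\leq 0}(X),(\tau^{\leq 1}F^\vee)[1])=0$ using the exact triangle~\eqref{eq30}, and then applies Proposition~\ref{lemma11} to transfer tilt-(semi)stability from $F^\ast[1]$ across this $0$-dimensional extension. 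Your approach is more elementary: once $F^{\ast\ast}$ is known to be locally free, the sequence $0\to F\to F^{\ast\ast}\to Q\to 0$ with $Q\in\Coh^{\leq 0}(X)$ directly yields $\EExt^1(F,\OO_X)\cong\EExt^2(Q,\OO_X)=0$, so the truncation collapses and there is nothing further to do. A pleasant consequence is that your argument already contains the main assertion of Theorem~\ref{theorem3} (the vanishing of $\EExt^1(F,\OO_X)$), which the paper only deduces afterward by combining Lemma~\ref{lemma16} with Theorem~\ref{theorem2}; conversely, the paper's route showcases the general principle of Proposition~\ref{lemma11}---that $0$-dimensional modifications preserve tilt-(semi)stability---which is of independent use elsewhere.
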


\begin{proof}
By Lemma \ref{lemma12}, the sheaf $F$ is locally free outside a 0-dimensional locus.  Hence $\EExt^i (F,\OO_X)$ is 0-dimensional for all $i>0$, implying $\overline{\Delta}_\omega (F^\ast)=0$.  Since $F^\ast$ is reflexive,  Proposition \ref{pro2} implies $F^\ast$ is locally free.  And so $F^\ast [1]$ is $\nu_{\omega,0}$-semistable by \cite[Proposition 7.4.1]{BMT}.  Applying $\Hom (\Coh^{\leq 0}(X),-)$ to the exact triangle in $D(X)$
\begin{equation}\label{eq30}
  \tau^{\geq 2}(F^\vee) \to (\tau^{\leq 1}(F^\vee)) [1] \to F^\vee [1] \to \tau^{\geq 2}(F^\vee) [1]
\end{equation}
and writing $E := (\tau^{\leq 1}(F^\vee))[1]$, we obtain $\Hom (\Coh^{\leq 0}(X),E)=0$.  Hence, by applying Proposition \ref{lemma11} to the short exact sequence
\[
0 \to  F^\ast [1] \to E \to \EExt^1 (F,\OO_X) \to 0
\]
 in $\Bob$, we get that $E$ itself is $\nu_{\omega,0}$-semistable.
\end{proof}

We can now finish the proof of Theorem \ref{theorem3}:

\begin{proof}[Proof of Theorem \ref{theorem3}]
By tensoring $F$ with $\OO_X (m\omega)$ for $m \gg 0$, we can assume $\omega^2 \tch_1(F)>0$.  From the proof of Lemma \ref{lemma16}, we know $F^\ast$ is locally free. By Lemma \ref{lemma16} and Theorem \ref{theorem2}, we have $H^0( \tau^{\leq 1}(F^\vee))[1]=0$, i.e.\ $\EExt^1 (F,\OO_X)$ is zero.  The last assertion of Theorem \ref{theorem3} follows from the fact that any torsion-free sheaf on a smooth threefold has homological dimension at most 2.
\end{proof}

Recall the following easy consequence of \cite[Propositions 7.4.1, 7.4.2]{BMT}: suppose $F$ is  a $\muob$-stable vector bundle  on $X$ with $\overline{\Delta}_\omega (F)=0$ and $\nuob (F)=0$.  Then the object $F[1]$ (resp.\ $F[2]$) lies in $\Aob$, has phase 1 with respect to $Z_{\omega,B}$ and hence is $Z_{\omega,B}$-semistable if $\omega^2 \tch_1 (F) >0$ (resp.\ $\omega^2 \tch_1(F) \leq 0$).  Now we have a slight extension of this result:

\begin{theorem}\label{theorem4}
Suppose $F$ is a $\mu_\omega$-semistable sheaf with $\overline{\Delta}_\omega (F)=0$, $\nu_\omega (F)=0$  and $\omega^2 ch_1 (F)>0$.  Then $F^\vee [2]$ is an object of phase 1 with respect to $Z_{\omega,0}$ in $\mathcal{A}_{\omega,0}$.
\end{theorem}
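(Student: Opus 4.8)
The plan is to recognise $F^\vee[2]$, inside $\mathcal A_{\omega,0}$, as built from two already-understood pieces: the shift $F^\ast[2]$ of the locally free dual $F^\ast$, and a $0$-dimensional sheaf. First, the hypotheses force $ch_0(F)>0$ (otherwise $\overline{\Delta}_\omega(F)=(\omega^2 ch_1(F))^2>0$), so the $\mu_\omega$-semistable sheaf $F$ is torsion-free of positive rank, and Theorem~\ref{theorem3} (with $B=0$) applies: $\EExt^1(F,\OO_X)=0$ and $F^\ast$ is locally free, while by Lemma~\ref{lemma12} the sheaf $T:=\EExt^2(F,\OO_X)$ is $0$-dimensional. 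Since $F$ has homological dimension $\le 2$, the only non-zero cohomology sheaves of $F^\vee$ are $H^0(F^\vee)=F^\ast$ and $H^2(F^\vee)=T$, so the truncation triangle for $F^\vee$, shifted by $2$, reads
\[
F^\ast[2]\longrightarrow F^\vee[2]\longrightarrow T\longrightarrow F^\ast[3].
\]

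Next I would show $F^\ast[2]\in\mathcal A_{\omega,0}$ is of phase $1$. From $ch(F^\vee)=ch(F)^\vee$ and $T$ being $0$-dimensional one reads off $ch_0(F^\ast)=ch_0(F)$, $\omega^2 ch_1(F^\ast)=-\omega^2 ch_1(F)<0$ and $\omega ch_2(F^\ast)=\omega ch_2(F)$; hence $\overline{\Delta}_\omega(F^\ast)=\overline{\Delta}_\omega(F)=0$, and the numerator $\omega ch_2(F^\ast)-\tfrac{\omega^3}{6}ch_0(F^\ast)$ of $\nu_\omega(F^\ast)$ agrees with that of $\nu_\omega(F)$, so $\nu_\omega(F^\ast)=0$. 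Moreover $F^\ast$ is $\mu_\omega$-semistable, being the dual of a $\mu_\omega$-semistable torsion-free sheaf. Now I would apply the easy consequence of \cite[Propositions 7.4.1, 7.4.2]{BMT} recalled just before the theorem — in its semistable form, using that \cite[Proposition 7.4.1]{BMT} holds verbatim with ``stable'' replaced by ``semistable'', and (as I discuss below) that the $ch_3$-equality of \cite[Proposition 7.4.2]{BMT} likewise extends — to the locally free $\mu_\omega$-semistable sheaf $F^\ast$ with $\overline{\Delta}_\omega(F^\ast)=0$, $\nu_\omega(F^\ast)=0$ and $\omega^2 ch_1(F^\ast)\le 0$: this yields $F^\ast[2]\in\mathcal A_{\omega,0}$ with $Z_{\omega,0}(F^\ast[2])=Z_{\omega,0}(F^\ast)\in\RR_{<0}$, i.e.\ of phase $1$.

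It remains to assemble. The $0$-dimensional sheaf $T$ lies in $\mathcal T'_{\omega,0}\subset\mathcal A_{\omega,0}$ and has $Z_{\omega,0}(T)=-\ell$ real with $\ell\ge 0$ the length of $T$. Because $\mathcal A_{\omega,0}$ is the heart of a bounded t-structure and both outer vertices of the triangle above lie in $\mathcal A_{\omega,0}$, so does $F^\vee[2]$; in fact the triangle becomes a short exact sequence $0\to F^\ast[2]\to F^\vee[2]\to T\to 0$ in $\mathcal A_{\omega,0}$. Additivity of $Z_{\omega,0}$ then gives $Z_{\omega,0}(F^\vee[2])=Z_{\omega,0}(F^\ast)-\ell\in\RR_{<0}$, so $F^\vee[2]$ is an object of phase $1$ in $\mathcal A_{\omega,0}$, as desired.

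The main obstacle is the strict negativity $Z_{\omega,0}(F^\ast)\in\RR_{<0}$ invoked above. This is equivalent to the inequality $ch_3<\tfrac{\omega^2}{2}ch_1$ for the tilt-semistable object $F^\ast[1]$ with $\nu_{\omega,0}(F^\ast[1])=0$ — that is, \cite[Conjecture 3.2.6]{BMT} for this object, which is implied by Conjecture~\ref{conj.bmt.1.3.1} — and it cannot be deduced formally; but it is exactly what \cite[Proposition 7.4.2]{BMT} establishes, with equality, for slope-semistable bundles of vanishing discriminant. Thus the point requiring care is that \cite[Propositions 7.4.1, 7.4.2]{BMT} apply to the $\mu_\omega$-\emph{semistable} bundle $F^\ast$: the first is already noted in the paper to hold for semistable sheaves, and I expect the finite-covering argument behind the second (compare the proof of Proposition~\ref{pro2}) to use only semistability, so that $ch_3(F^\ast)=\tfrac{\omega^2}{18}ch_1(F^\ast)$ persists; failing that, one can reduce to the $\mu_\omega$-Jordan--Hölder factors of $F^\ast$, using additivity of $\overline{\Delta}_\omega$ among subquotients of equal slope to see each factor has vanishing $\overline{\Delta}_\omega$, and Proposition~\ref{pro2} applied to their saturations inside the locally free $F^\ast$ to keep everything locally free.
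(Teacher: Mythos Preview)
Your argument is correct and follows essentially the same route as the paper: decompose $F^\vee[2]$ via the truncation triangle into a piece built from $F^\ast$ and a $0$-dimensional sheaf, and show both land in $\mathcal A_{\omega,0}$ with vanishing imaginary part of $Z_{\omega,0}$. The only structural difference is that you first invoke Theorem~\ref{theorem3} to get $\EExt^1(F,\OO_X)=0$, so that $\tau^{\leq 1}F^\vee = F^\ast$, whereas the paper works directly with $(\tau^{\leq 1}F^\vee)[1]$ via Lemma~\ref{lemma16} (which is logically upstream of Theorem~\ref{theorem3}). Your route is a mild detour but perfectly valid.

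Your last paragraph, however, worries about something the paper does not actually prove. The paper's argument establishes only that $F^\vee[2]\in\mathcal A_{\omega,0}$ and $\Im Z_{\omega,0}(F^\vee[2])=0$; it never verifies $\Re Z_{\omega,0}(F^\vee[2])<0$. This is why the sentence immediately following the theorem reads ``if $(\mathcal A_{\omega,0},Z_{\omega,0})$ is a stability condition, then we can speak of $F^\vee[2]$ as a $Z_{\omega,0}$-semistable object'': once that is assumed, strict negativity of the real part is automatic for any nonzero object of the heart with vanishing imaginary part. So you do not need the semistable form of \cite[Proposition 7.4.2]{BMT} at all --- it suffices to observe $\nu_{\omega,0}(F^\ast)=0$ gives $\Im Z_{\omega,0}(F^\ast[2])=0$, and drop the claim $Z_{\omega,0}(F^\ast[2])\in\RR_{<0}$. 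Your Jordan--H\"older reduction sketch is therefore unnecessary (and, incidentally, the additivity of $\overline{\Delta}_\omega$ along equal-slope filtrations you invoke is not literally true, so that route would need more care if it were needed).
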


 In particular, if $(\mathcal{A}_{\omega,0},Z_{\omega,0})$ is a stability condition, then we can speak of $F^\vee [2]$ as a $Z_{\omega,0}$-semistable object.

\begin{proof}
By Lemma \ref{lemma16}, we know $(\tau^{\leq 1}F^\vee)[1]$ is $\nu_{\omega,0}$-semistable with $\nu_{\omega,0}=0$.  Hence $(\tau^{\leq 1}F^\vee)[1] \in \mathcal{F}'_{\omega,0}$, and so $(\tau^{\leq 1}F^\vee)[2] \in \mathcal{A}_{\omega,0}$.  Since $(\tau^{\geq 2}F^\vee)[2]$ also lies in $\mathcal{A}_{\omega,0}$ and has phase 1 with respect to $Z_{\omega,0}$, from the exact triangle \eqref{eq30} we see that $F^\vee [2]$ is also  of phase 1 in $\mathcal{A}_{\omega,0}$.
\end{proof}

\begin{remark}\label{remark1}
Given Theorem \ref{theorem4}, it is reasonable to hope that for any Chern character $ch$ satisfying the conditions in the theorem, the moduli space of $Z_{\omega,0}$-semistable objects in $\mathcal{A}_{\omega,0}$ (provided $(\mathcal{A}_{\omega,0},Z_{\omega,0})$ is a stability condition and the moduli space  exists) contains the moduli of slope semistable sheaves of Chern character $ch$ as a subspace.

More concretely, suppose $Z \subset X$ is a 0-dimensional subscheme of length $n$, and let $L$ be a  line bundle on $X$ such that $I_Z \otimes L$ satisfies the hypotheses of Theorem \ref{theorem4}.  For instance, we can choose $L$ so that $c_1(L)$ is proportional to $\omega$ (so that tensoring $I_Z$ by $L$ does not alter its $\overline{\Delta}_\omega$); on the other hand, it can be checked easily that  $\nu_\omega (I_Z \otimes L)=0$ is equivalent to $3\omega c_1(L)^2 = \omega^3$, provided $\omega^2 c_1(L) \neq 0$.   Then $(I_Z \otimes L)^\vee [2]$ would be an object of of $\mathcal{A}_{\omega,0}$ with phase $1$ with respect to $Z_{\omega,0}$, and hence would be $Z_{\om, 0}$-semistable in $\mathcal{A}_{\omega,0}$.  Therefore, if the moduli space of $Z_{\omega,0}$-semistable objects $E \in \mathcal{A}_{\omega,0}$ with fixed chern character $ch(E)=ch((I_Z \otimes L)^\vee [2])$ exists, then it contains the Hilbert scheme of $n$ points on $X$.
The following lemma shows that, under the condition $H^{-1}(E)=0$, a $Z_{\omega, 0}$-semistable object $E \in \mathcal{A}_{\omega,0}$ with the same Chern classes as $(I_Z \otimes L)^\vee [2]$ is `almost' (i.e.\ up to a 0-dimensional sheaf sitting at degree 0) of the form $(I_Z \otimes L)^\vee [2]$.
\end{remark}

\begin{lemma}\label{lemma17}
Suppose $B=0$, and any line bundle on $X$ with the same Chern classes as $\OO_X$ is isomorphic to $\OO_X$ (e.g.\ when $X$ has Picard rank 1).  Suppose $E \in \mathcal A_{\omega, 0}$ is such that $ch(E) = ch( (I_Z \otimes L)^\vee [2])$ where $I_Z, L$ are as in Remark 3.18.  (In particular, this means $\omega^2 ch_1(E) \neq 0$, $\nu_{\omega, 0} (E)=0$, and $Z_{\omega, 0}(E)$ has phase 1.)
If $H^{-1}(E)=0$, then $H^0(E^\vee [2]) \cong I_Y \otimes L$ where $I_Y$ is the  ideal sheaf of some 0-dimensional subscheme $Y$ of $X$, and $H^0(E)$ is a 0-dimensional sheaf.
\end{lemma}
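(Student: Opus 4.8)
The plan is to read off the shape of $E$ from the two-step tilt and then pass to derived duals. Since $\mathcal{A}_{\omega,0}$ is the tilt of $\mathcal{B}_{\omega,0}$ at the torsion pair $(\mathcal{T}'_{\omega,0},\mathcal{F}'_{\omega,0})$, the object $E$ fits into a short exact sequence $0 \to G[1] \to E \to S \to 0$ in $\mathcal{A}_{\omega,0}$ with $G := H^{-1}_{\mathcal{B}_{\omega,0}}(E) \in \mathcal{F}'_{\omega,0}$ and $S := H^0_{\mathcal{B}_{\omega,0}}(E) \in \mathcal{T}'_{\omega,0}$, where $G$, viewed as an object of $\mathcal{B}_{\omega,0}$, has $H^{-1}(G) \in \mathcal{F}_{\omega,0}$ and $H^0(G) \in \mathcal{T}_{\omega,0}$. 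Writing out the long exact sequence of standard cohomology sheaves of the triangle $G[1] \to E \to S \to G[2]$ and feeding in $H^{-1}(E)=0$, I would get $H^0(G)=0$ and $H^{-1}(S)=0$; hence $G = F_0[1]$ for the torsion-free sheaf $F_0 := H^{-2}(E) \in \mathcal{F}_{\omega,0}$, the object $S = T_0 := H^0(E)$ is a torsion sheaf in $\mathcal{T}_{\omega,0}$, and $E$ sits in a triangle $F_0[2] \to E \to T_0 \xrightarrow{+1}$ with $F_0[1] \in \mathcal{F}'_{\omega,0}$ and $T_0 \in \mathcal{T}'_{\omega,0}$. Comparing $ch_0$ gives $1 = \rk F_0 + \rk T_0$ with both summands $\geq 0$; the case $\rk F_0 = 0$ would force $F_0=0$ and $E \cong T_0 \in \mathcal{T}'_{\omega,0}$, but then $\omega^2 ch_1(E)\neq 0$ together with the fact that an object of $\mathcal{T}'_{\omega,0}$ with nonzero $\omega^2 ch_1$ has strictly positive tilt-slope would contradict $\nu_{\omega,0}(E)=0$. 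So $\rk F_0 = 1$ and $T_0$ is torsion.

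The next step is to show $F_0$ is a line bundle. Put $M := F_0^{\ast\ast}$; it is a rank-one reflexive sheaf on the smooth threefold $X$, hence a line bundle, and there is a canonical exact sequence $0 \to F_0 \to M \to Q \to 0$ with $Q \in \Coh^{\leq 1}(X)$. Since $F_0, M \in \mathcal{F}_{\omega,0}$ and $Q \in \mathcal{T}_{\omega,0}$, rotating this triangle realises $Q$ as a subobject of $F_0[1]$ in $\mathcal{B}_{\omega,0}$. But $\nu_{\omega,0}(Q) = +\infty$, so if $Q \neq 0$ the maximal $\nu_{\omega,0}$-slope of $F_0[1]$ would be $+\infty$, which is impossible for an object of $\mathcal{F}'_{\omega,0}$ (whose maximal $\nu_{\omega,0}$-slope is $\leq 0$). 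Therefore $Q = 0$ and $F_0 = M$ is a line bundle.

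The crux of the argument is that $T_0 = H^0(E)$ is $0$-dimensional. Both $M[2]$ and $T_0$ lie in $\mathcal{A}_{\omega,0}$ — the former because $M[1] = F_0[1] \in \mathcal{F}'_{\omega,0}$ — so both have $\Im Z_{\omega,0} \geq 0$; since their sum $\Im Z_{\omega,0}(E)$ vanishes (as $Z_{\omega,0}(E)$ has phase $1$), each of them vanishes, and in particular $\omega\cdot ch_2(T_0) = \Im Z_{\omega,0}(T_0) = 0$. Now take the torsion filtration $0 \to T_0^{\leq 1} \to T_0 \to P \to 0$ with $P$ pure of dimension $2$ or zero. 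The quotient $P$ lies in $\mathcal{T}'_{\omega,0}$; if $P \neq 0$ then $\omega^2 ch_1(P) > 0$, and the defining property of $\mathcal{T}'_{\omega,0}$ forces every Harder--Narasimhan factor of $P$ in $\mathcal{B}_{\omega,0}$ to have positive tilt-slope, whence $\nu_{\omega,0}(P) > 0$ and so $\omega\cdot ch_2(P) > 0$; combined with $\omega\cdot ch_2(T_0^{\leq 1}) \geq 0$ this contradicts $\omega\cdot ch_2(T_0) = 0$. So $T_0 = T_0^{\leq 1}$, and then $\omega\cdot ch_2(T_0) = 0$ with $\omega$ ample forces the effective $1$-cycle $ch_2(T_0)$ to vanish, so $T_0$ is $0$-dimensional.

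Finally I would dualise: applying $(-)^\vee = R\HHom(-,\OO_X)$ to $F_0[2] \to E \to T_0$ and using $(M[k])^\vee = M^{-1}[-k]$ together with $T_0^\vee = \EExt^3(T_0,\OO_X)[-3]$ (valid since $T_0$ is $0$-dimensional), a shift by $[2]$ produces a triangle $\EExt^3(T_0,\OO_X)[-1] \to E^\vee[2] \to M^{-1} \xrightarrow{\phi} \EExt^3(T_0,\OO_X)$. Its long exact cohomology sequence identifies $H^0(E^\vee[2])$ with $\ker\phi$, a subsheaf of the line bundle $M^{-1}$ with $0$-dimensional cokernel, hence of the form $I_Y \otimes M^{-1}$ for a $0$-dimensional subscheme $Y \subset X$. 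Since $T_0$ is $0$-dimensional, $ch_1(M) = ch_1(F_0) = ch_1(E) = -c_1(L)$, so $M^{-1}\otimes L^{-1}$ has the Chern classes of $\OO_X$ and is therefore isomorphic to $\OO_X$ by hypothesis; thus $M^{-1} \cong L$, giving $H^0(E^\vee[2]) \cong I_Y \otimes L$, while $H^0(E) = T_0$ is the asserted $0$-dimensional sheaf. The hard part is the third paragraph: the vanishing of the $2$-dimensional part of $T_0$ requires handling the two torsion pairs simultaneously — $T_0 \in \mathcal{T}'_{\omega,0}$ pushes the tilt-slope (and hence $\omega\cdot ch_2$) of any pure $2$-dimensional quotient strictly positive, while the non-negativity of $\Im Z_{\omega,0}$ on $\mathcal{A}_{\omega,0}$ and the phase-$1$ hypothesis on $E$ pin $\Im Z_{\omega,0}(T_0)$ to zero. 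A secondary point to get right is the bookkeeping between $\mathcal{B}_{\omega,0}$-cohomology and standard cohomology in the first paragraph, and the standard positivity facts about $\mathcal{T}'_{\omega,0}$ and $\mathcal{F}'_{\omega,0}$ used throughout.
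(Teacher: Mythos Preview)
Your proof is correct and follows the same skeleton as the paper's: take the torsion-pair decomposition $0\to E_1[1]\to E\to E_2\to 0$ in $\mathcal{A}_{\omega,0}$, use $H^{-1}(E)=0$ to reduce $E_1$ and $E_2$ to (shifted) sheaves, show $E_2$ is $0$-dimensional, and then dualise. One small slip: you call $T_0$ a ``torsion sheaf'' before the rank argument; at that point you only know $T_0\in\mathcal{T}_{\omega,0}$. Your next sentence repairs this, so it is harmless.

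Where you diverge from the paper is in the two technical sub-steps. To prove $E_2\in\Coh^{\leq 0}(X)$, the paper invokes \cite[Remark~3.2.2]{BMT}: it takes the $\nu$-HN filtration of any $G\in\mathcal{T}'_{\omega,0}$ with $\Im Z_{\omega,0}(G)=0$, argues each factor has $\omega^2 ch_1=0$, and hence lies in $\langle\Coh^{\leq 1}(X),\,F[1]:F\text{ $\mu$-stable, }\mu=0\rangle$; combining this with $H^{-1}(E_2)=0$ and $\Im Z=0$ forces the factors into $\Coh^{\leq 0}$. Your route---a rank count to get $\rk T_0=0$, then the pure-$2$-dimensional quotient argument via closure of $\mathcal{T}'_{\omega,0}$ under $\mathcal{B}$-quotients---is more elementary and self-contained. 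For the identification of the line bundle, the paper never shows $H^{-1}(E_1)$ is reflexive; instead it dualises first and works with $H^{-1}(E_1)^\ast$ (automatically reflexive), computes its $ch_{\leq 2}$, and then appeals to the Picard hypothesis together with \cite{Sim92}. Your argument---use $F_0[1]\in\mathcal{F}'_{\omega,0}$ to rule out a $\Coh^{\leq 1}$ subobject in $\mathcal{B}_{\omega,0}$, hence $F_0$ is reflexive of rank one, hence a line bundle---is a pleasant structural shortcut that avoids the citation.
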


\begin{proof}
With respect to $Z_{\omega, 0}$-stability, $E$ has a filtration in $\mathcal A_{\omega, 0}$ with $Z_{\omega, 0}$-stable factors $E^i$.  Since $\Im Z_{\omega, 0}(E)=0$, the same holds for each $E^i$.  For each $i$, we have a canonical short exact sequence in $\mathcal A_{\omega, 0}$
\[
 0 \to E_1^i [1] \to E^i \to E_2^i \to 0
\]
where $E_1^i \in \mathcal F_{\omega, 0}'$ and $E_2^i \in\mathcal T_{\omega,0}'$.  Since $E^i$ is $Z_{\omega, 0}$-stable, for each $i$, either $E^i =E_1^i[1]$ or $E^i = E_2^i$.

We now make an observation on objects in $\mathcal T_{\omega, 0}'$:  Suppose $G$ is any object in $\mathcal T_{\omega, 0}'$ with $\Im Z_{\omega, 0}(G)=0$.  Then $G$ is necessarily  $Z_{\omega, 0}$-semistable as an object in $\mathcal A_{\omega, 0}$.  With respect to $\nu_{\omega, 0}$-stability, $G$ has a filtration in $\mathcal B_{\omega, 0}$ with $\nu_{\omega, 0}$-stable factors $G^i$.  By the definition of $\mathcal T_{\omega, 0}'$, we know $\nu_{\omega, 0} (G^i)>0$ for each $i$.  On the other hand, each $G^i$ lies in $\mathcal T_{\omega, 0}' \subset \mathcal A_{\omega, 0}$, and so $G$ is an extension of the $G^i$ in $\mathcal A_{\omega, 0}$ as well.  Hence $\Im Z_{\omega, 0}(G^i)=0$ for all $i$.  Now,  if $\omega^2 ch_1 (G^i) \neq 0$ for some $i$, then $\omega^2 ch_1 (G^i) > 0$, and so $\Im Z_{\omega, 0}(G^i)>0$, which is a contradiction.  Hence $\omega^2 ch_1 (G^i)=0$ for all $i$.  By \cite[Remark 3.2.2]{BMT}, each $G^i$ lies in the extension-closed category
\[
 \mathcal C := \langle \Coh^{\leq 1}(X), F[1] : F \text{ $\mu_{\omega,0}$-stable with $\mu_{\omega,0} (F)=0$}\rangle \subset \mathcal B_{\omega,0}.
\]
Note that, every object in $\mathcal C$ has $\nu_{\omega, 0}=+\infty$, and is thus $\nu_{\omega, 0}$-semistable.  Hence $\mathcal C \subset \mathcal T_{\omega, 0}'$, and each $G^i$, being $\nu_{\omega, 0}$-stable, either lies in $\Coh^{\leq 1}(X)$ or is of the form $F[1]$ for some $\mu_{\omega,0}$-stable sheaf of $\mu_{\omega, 0}=0$.  Furthermore, if $G^i$ lies in $\Coh^{\leq 1}(X)$, then it must lie in $\Coh^{\leq 0}(X)$ since $\Im Z_{\omega, 0}(G^i)=0$.

Now, from the canonical short exact sequence
\begin{equation}\label{eq28}
0 \to E_1[1] \to E \to E_2 \to 0
 \end{equation}
 in $\mathcal A_{\omega, 0}$, we see that $H^{-1}(E)=0$ implies $H^0(E_1)=0$ and $H^{-1}(E_2)=0$.  That is, both $E_1, E_2$ are sheaves (up to shift).  In particular, by our observation above, $E_2$ must be an extension of objects in $\Coh^{\leq 0}(X)$, and so $H^0(E)\cong E_2 \in \Coh^{\leq 0}(X)$.

On the other hand, $H^{-1}(E_1)$ is a rank-one torsion-free sheaf by our assumption on $ch(E)$. Dualising \eqref{eq28} and shifting, we get an exact triangle
\begin{equation}\label{eq29}
  E_2^\vee [2] \to E^\vee [2] \to E_1^\vee [1].
\end{equation}
Since $E_2$ is a 0-dimensional sheaf at degree 0, $E_2^\vee[2] \in \Coh^{\leq 0} (X) [-1]$.  On the other hand, since $E_1$ is a sheaf at degree $-1$, the complex $E_1^\vee [1]$ sits at degrees 0 through 3.  The long exact sequence of cohomology of \eqref{eq29} then looks like
\[
 0 \to H^0(E^\vee [2]) \to H^{-1}(E_1)^\ast \to \HHom (E_2, \OO_X) \to \cdots.
\]
By our assumption on $ch(E)$, we have
\[
ch_i (H^{-1}(E_1)^\ast)= ch_i (H^0 (E^\vee [2])) = ch_i (I_Z \otimes L) \text{ for $i=0,1,2$}.
\]
Hence $ch (H^{-1}(E_1)^\ast \otimes L^\ast)$ is of the form $(1,0,0,\ast)$.  Since $H^{-1}(E_1)^\ast \otimes L^\ast$ is a reflexive sheaf, by our assumption on $X$ and \cite[Theorem 2]{Sim92}, this forces $H^{-1}(E_1)^\ast \otimes L^\ast \cong \OO_X$.  Hence $H^0(E^\vee [2]) = I_Y \otimes L$ for some 0-dimensional subscheme $Y \subset X$, while $H^0(E) = H^0(E_2) \in \Coh^{\leq 0}(X)$ as wanted.
\end{proof}


%
%
%
%

\section{Tilt-semistable objects for $\omega \to \infty$}\label{sec.infinity}

In \cite[Section 7.2]{BMT}, Bayer-Macr\`{i}-Toda consider a subcategory $\mathfrak{D} \subset \Bob$ when $\omega$ is an ample $\mathbb{Q}$-divisor, where $\mathfrak{D}$ consists of objects $E \in\Bob$ of the following form:
\begin{itemize}
\item[(a)] $H^{-1}(E)=0$, and $H^0(E)$ is a pure sheaf of dimension $\geq 2$ which is slope semistable with respect to $\omega$.
\item[(b)] $H^{-1}(E)=0$, and $H^0(E) \in \Coh^{\leq 1}(X)$.
\item[(c)] $H^{-1}(E)$ is a torsion-free slope semistable sheaf, and $H^0(E) \in \Coh^{\leq 1}(X)$;  if $\muob (H^{-1}(E))<0$, then also $\Hom (\Coh^{\leq 1}(X),E)=0$.
\end{itemize}
And we have:
\begin{lemma}\cite[Lemma 7.2.1]{BMT}\label{lemma15}
If $E \in \Bob$ is $\nu_{m\omega,B}$-semistable for $m \gg 0$, then $E \in \mathfrak{D}$.
\end{lemma}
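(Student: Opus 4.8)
The plan is to assume $E \in \Bob$ is $\numob$-semistable for all $m \gg 0$ and to read off directly which of the three shapes (a)--(c) it has. First note that $\mumob = m^2\muob$, so the torsion pair $(\Tob,\Fob)$, and hence the heart $\Bob$ itself, is unchanged when $\om$ is replaced by $m\om$; thus we are dealing with one fixed object $E$ and the one-parameter family of slope functions $\numob$. The computational engine is the large-$m$ asymptotics of $\numob$ on an object $S$ of $\Bob$ that is a sheaf or the shift of a sheaf. From
\[
\numob(S) = \frac{\om\,\tch_2(S)}{m\,\om^2\tch_1(S)} - \frac{m\,\om^3\tch_0(S)}{6\,\om^2\tch_1(S)}
\]
one reads off: if $\rk(S)\ne 0$ and $\om^2\tch_1(S)\ne 0$, the leading term in $m$ is $-\tfrac{m\om^3}{6\muob(S)}$, so $\numob(S)\to+\infty$ when $\muob(S)<0$ and $\numob(S)\to-\infty$ when $\muob(S)>0$; if $\rk(S)=0$ but $\om^2\tch_1(S)\ne 0$, then $\numob(S)\to 0$; and if $\om^2\tch_1(S)=0$, then $\numob(S)\equiv+\infty$. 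I will also use repeatedly that every object of $\Bob$ has $\om^2\tch_1\ge 0$, that $H^0(F)\in\Tob$ and $H^{-1}(F)\in\Fob$ (in particular torsion-free) for every $F\in\Bob$, and that $\Coh^{\le 1}(X)$ is extension-closed and closed under quotients in $\Bob$ (same argument as Lemma \ref{lemma-coh0-closed}).

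\textbf{Case $H^{-1}(E)\ne 0$.} First I would show $H^0(E)\in\Coh^{\le 1}(X)$: otherwise $T:=H^0(E)$ has $\om^2\tch_1(T)>0$, so $\numob(T)$ is bounded above (it tends to $-\infty$ if $\rk T>0$, to $0$ if $\rk T=0$), while the canonical subobject $H^{-1}(E)[1]\hookrightarrow E$ has $\numob(H^{-1}(E)[1])=\numob(H^{-1}(E))\to+\infty$ (or $\equiv+\infty$), since $H^{-1}(E)\in\Fob$ is a nonzero torsion-free sheaf, hence of positive rank with $\muob\le 0$; as $H^{-1}(E)[1]$ is a proper nonzero subobject, this contradicts $\numob$-semistability for $m\gg 0$. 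Next, $H^{-1}(E)$ is automatically torsion-free, and I would show it is $\muob$-semistable: if not, its first $\muob$-Harder--Narasimhan factor $F_1\subsetneq H^{-1}(E)$ lies in $\Fob$, so $F_1[1]$ is a proper nonzero subobject of $E$ with $E/F_1[1]$ an extension of $(H^{-1}(E)/F_1)[1]$ by $H^0(E)\in\Coh^{\le 1}(X)$; the leading coefficients in $m$ of $\numob(F_1[1])$ and $\numob(E/F_1[1])$ are $-\tfrac{\om^3}{6\muob(F_1)}$ and $-\tfrac{\om^3}{6\muob(H^{-1}(E)/F_1)}$, and since $\muob(F_1)>\muob(H^{-1}(E)/F_1)$ with both $\le 0$ (or $\muob(F_1)=0$, in which case $\numob(F_1[1])\equiv+\infty$ while $\numob(E/F_1[1])$ is finite), we get $\numob(F_1[1])>\numob(E/F_1[1])$ for $m\gg 0$ --- a contradiction. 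Finally, when $\muob(H^{-1}(E))<0$: a nonzero morphism $G\to E$ with $G\in\Coh^{\le 1}(X)$ would, after replacing $G$ by its image in $\Bob$ (still in $\Coh^{\le 1}(X)$), give a proper nonzero subobject $I\hookrightarrow E$ with $\numob(I)\equiv+\infty$, forcing $\om^2\tch_1(E/I)=0$; but $\om^2\tch_1(E/I)=\om^2\tch_1(E)=-\om^2\tch_1(H^{-1}(E))$ (using $H^0(E)\in\Coh^{\le1}(X)$), which is nonzero, a contradiction. Hence $E$ has shape (c).

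\textbf{Case $H^{-1}(E)=0$.} Now $E=T:=H^0(E)\in\Tob$ is a sheaf; if $T\in\Coh^{\le 1}(X)$ we are in case (b), so assume $\om^2\tch_1(T)>0$. Any nonzero subsheaf $T'\in\Coh^{\le 1}(X)$ of $T$ lies in $\Tob$, hence is a proper nonzero subobject of $E$ with $\numob(T')\equiv+\infty$, while $T/T'\notin\Coh^{\le 1}(X)$ has $\numob$ bounded above --- a contradiction; so $T$ has none. Likewise, if the torsion subsheaf $G$ of $T$ is nonzero and $T/G$ has positive rank (so $G$ is $2$-dimensional), then $G$ is a proper nonzero subobject of $E$ with $\numob(G)\to 0$ while $\numob(T/G)\to-\infty$ --- a contradiction. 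Therefore $T$ is torsion-free or pure of dimension $2$; either way $T$ is pure of dimension $\ge 2$. Lastly $T$ is slope-semistable: if not, its first Harder--Narasimhan factor $T_1\subsetneq T$ (with respect to $\muob$ if $\rk T>0$, or the generalized slope of \cite[Section 7.2]{BMT} if $T$ is pure of dimension $2$) lies in $\Tob$, hence is a proper nonzero subobject of $E$, and the leading coefficients in $m$ of $\numob(T_1)$ and $\numob(T/T_1)$ are $-\tfrac{\om^3}{6\muob(T_1)}$ and $-\tfrac{\om^3}{6\muob(T/T_1)}$ with $\muob(T_1)>\muob(T/T_1)>0$, giving $\numob(T_1)>\numob(T/T_1)$ for $m\gg 0$ --- a contradiction. (When $T$ is pure of dimension $2$, $\numob$ restricted to rank-zero objects is a positive multiple of that slope, so the inequality is immediate.) Hence $E$ has shape (a).

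\textbf{Expected main obstacle.} The cheap dichotomy ``one side has $\numob\to+\infty$, the other is bounded'' disposes of the dimension and purity constraints, but it is useless for the slope-semistability of the cohomology sheaves, where both the candidate destabilizing subobject and its quotient may have $\numob\to+\infty$ (or both $\to-\infty$). There one is forced to descend to the coefficient of $m$ in the expansion and invoke the monotonicity of $\mu\mapsto -1/\mu$ together with the ordering of $\muob$-slopes of Harder--Narasimhan factors; pinning down the signs and the borderline case $\muob=0$ is the delicate part. A routine-but-necessary secondary point is checking each time that the chosen subsheaf genuinely defines a \emph{subobject in $\Bob$} (i.e.\ lies in $\Tob$, resp.\ $\Fob$) and is a \emph{proper} one, and handling the generalized slope on pure two-dimensional sheaves correctly in case (a).
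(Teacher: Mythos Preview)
The paper does not supply its own proof of this lemma: it is simply quoted as \cite[Lemma 7.2.1]{BMT} and used as input, so there is nothing in the present paper to compare your argument against.

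That said, your proof is correct and is exactly the natural one (and presumably the one in \cite{BMT}): exploit that $\Bob$ is independent of $m$, expand $\numob$ in powers of $m$, and in each case produce a subobject/quotient pair in $\Bob$ whose $\numob$-slopes separate as $m\to\infty$. Your handling of the two genuinely delicate points --- that the candidate destabilising subsheaf really is a $\Bob$-subobject (using that $\Tob$ is closed under quotients and $\Fob$ under subobjects in $\Coh(X)$), and the comparison of the order-$m$ coefficients via monotonicity of $\mu\mapsto -1/\mu$ on each sign region --- is correct. The closure of $\Coh^{\le 1}(X)$ under subobjects and quotients in $\Bob$ that you invoke does follow by the same argument as Lemma~\ref{lemma-coh0-closed}. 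The pure dimension-two subcase of (a) is also fine: for rank-zero objects $\numob$ is $m^{-1}$ times the $\om\tch_2/\om^2\tch_1$-slope, so a destabilising subsheaf for that slope destabilises for every $m$. One small point worth making explicit in the torsion-free subcase of (a) is that $T/T_1$ is again torsion-free (HN quotients are) and lies in $\Tob$, so $\muob(T/T_1)>0$ and the sign of the leading coefficient is as you claim; you use this implicitly.
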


\begin{remark}\label{remark3}
We point out that any dual-PT-semistable complex (e.g.\ those termed as $\sigma_3$-semistable in \cite{Lo3}) of positive degree is of type (c) in the category $\mathfrak{D}$ above.  We do not know whether all dual-PT-semistable complexes of positive degree are $\nu_{m\omega, B}$-semistable for $m \gg 0$, although we take one step in this direction in Lemma \ref{lemma8} below.
\end{remark}

In this section, we try to prove the converse of Lemma \ref{lemma15}, which would give examples of tilt-stable objects when $\omega \to \infty$.  Since tilt-semistable objects with $\nuob=0$ are $Z_{\omega,B}$-semistable objects of phase 1 in $\Aob$, these results can help us describe Bridgeland semistable objects on threefolds as $\omega \to \infty$.

To start with, we observe the following easy consequence of Lemma \ref{lemma15} and Theorem \ref{theorem2}:

\begin{lemma}
Suppose $E\in \Bob$ is such that $\overline{\Delta}_\omega (E)=0$, $ch_0(E)<0$, $c_1(E)$ is proportional to $\omega$ and $\omega^2 \tch_1 (H^{-1}(E))<0$.  If $E$ is $\nu_{m\omega,B}$-semistable for $m \gg 0$, then $E = H^{-1}(E)[1]$ where $H^{-1}(E)$ is a $\muob$-semistable sheaf.
\end{lemma}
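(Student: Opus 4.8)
The plan is to deduce this directly from Lemma~\ref{lemma15} together with Theorem~\ref{theorem2}; the only mild subtlety is that Theorem~\ref{theorem2} is phrased for a fixed polarisation while we are only given $\nu_{m\omega,B}$-semistability for $m\gg 0$, so the first thing I would do is record the relevant scaling invariances. Since $\mu_{m\omega,B}$-semistability of a coherent sheaf coincides with $\muob$-semistability (the preliminaries show $\mu$-semistability is insensitive to $B$, and replacing $\omega$ by $m\omega$ just multiplies every twisted slope by $m^2$, as $\tch_1$ does not involve $\omega$), the torsion pair defining the first tilt is unchanged; hence $\mathcal{B}_{m\omega,B}=\Bob$, and in particular $E\in\mathcal{B}_{m\omega,B}$. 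Likewise $(m\omega)^2\tch_1(H^{-1}(E))=m^{2}\,\omega^2\tch_1(H^{-1}(E))<0$, the condition $H^0(E)\in\Coh^{\leq 1}(X)$ mentions no polarisation, and a direct computation gives $\overline{\Delta}_{m\omega}(E)=m^{4}\,\overline{\Delta}_\omega(E)=0$. So it suffices to fix one value of $m\gg 0$ for which $E$ is $\nu_{m\omega,B}$-semistable and then verify the hypotheses of Theorem~\ref{theorem2} for the polarisation $m\omega$.

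\textbf{Pinning down the shape of $E$.} Here I would use $ch_0(E)<0$. By Lemma~\ref{lemma15}, $E\in\mathfrak{D}$, so $E$ is of type (a), (b), or (c). In types (a) and (b) one has $H^{-1}(E)=0$, so $E$ is a genuine sheaf and $ch_0(E)=\rk H^0(E)\geq 0$, contradicting $ch_0(E)<0$. Hence $E$ is of type (c): $H^{-1}(E)$ is a torsion-free, $\mu_\omega$-semistable sheaf (equivalently $\muob$-semistable, equivalently $\mu_{m\omega,B}$-semistable), and $H^0(E)\in\Coh^{\leq 1}(X)$. Moreover $H^{-1}(E)\neq 0$, since otherwise $ch_0(E)=\rk H^0(E)=0$ again; in particular $\rk H^{-1}(E)>0$.

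\textbf{Applying Theorem~\ref{theorem2}.} Finally I would feed this data into Theorem~\ref{theorem2} with $m\omega$ in place of $\omega$: condition (1) holds because $H^{-1}(E)$ is nonzero, torsion-free, $\mu_{m\omega,B}$-semistable, with $(m\omega)^2\tch_1(H^{-1}(E))<0$; condition (2) is $H^0(E)\in\Coh^{\leq 1}(X)$; condition (3) is $\overline{\Delta}_{m\omega}(E)=0$; and $E$ is tilt-semistable with respect to $(m\omega,B)$ by our choice of $m$. The ``resp.\ semistable'' half of Theorem~\ref{theorem2} then gives $E=H^{-1}(E)[1]$ with $H^{-1}(E)$ a locally free sheaf, which is $\mu_{m\omega,B}$-semistable and hence $\muob$-semistable — slightly more than the asserted statement. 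I do not anticipate a genuine obstacle: all the substance is in the two cited results, and the only point requiring care is confirming that the data provided by membership in $\mathfrak{D}$ matches exactly hypotheses (1)--(3) of Theorem~\ref{theorem2} once the harmless passage from $\omega$ to $m\omega$ is in place, as arranged in the first paragraph.
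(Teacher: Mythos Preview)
Your proposal is correct and follows exactly the route the paper indicates: the paper states this lemma as ``an easy consequence of Lemma~\ref{lemma15} and Theorem~\ref{theorem2}'' without spelling out any details, and your argument does precisely that, supplying the scaling invariances ($\mathcal{B}_{m\omega,B}=\Bob$, $\overline{\Delta}_{m\omega}=m^4\overline{\Delta}_\omega$) that justify applying Theorem~\ref{theorem2} at the polarisation $m\omega$. As you implicitly noticed, the hypothesis that $c_1(E)$ is proportional to $\omega$ is not actually used in the argument.
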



The next lemma is one step towards the converse of Lemma \ref{lemma15} for objects of type (c) above:

\begin{lemma}\label{lemma8}
Suppose $E \in \Bob$ satisfies the following: $H^{-1}(E)$ is a torsion-free slope stable sheaf, $H^0(E) \in \Coh^{\leq 1}(X)$, $\muob (H^{-1}(E)) < 0$ and $\Hom (\Coh^{\leq 1}(X),E)=0$. Then for any short exact sequence in $\Bob$
\begin{equation}\label{eq15}
0 \to M \to E \to N \to 0
\end{equation}
where $M,N \neq 0$, we have $\numob (M) < \numob (N)$ for $m \gg 0$.
\end{lemma}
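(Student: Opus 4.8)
The plan is to analyze the long exact sequence in cohomology coming from \eqref{eq15} and to exploit the large-$m$ asymptotics of $\numob$. Write $F = H^{-1}(E)$, so $F$ is torsion-free slope stable with $\muob(F) < 0$, and $H^0(E) \in \Coh^{\leq 1}(X)$. Taking cohomology sheaves of \eqref{eq15} gives an exact sequence
\[
0 \to H^{-1}(M) \to F \to H^{-1}(N) \to H^0(M) \to H^0(E) \to H^0(N) \to 0,
\]
with $H^{-1}(M), H^{-1}(N) \in \Fob$ torsion-free and $H^0(M), H^0(N) \in \Tob$. Since $F$ is torsion-free, $H^{-1}(M)$ is a subsheaf of $F$; and since $\Hom(\Coh^{\leq 1}(X), E) = 0$, the object $M$ cannot have a subobject in $\Coh^{\leq 1}(X)$, which rules out $M \in \Coh^{\leq 1}(X)$ and forces $H^{-1}(M) \neq 0$ whenever $\om^2\tch_1(M)$ is small; more carefully, one should record that $H^0(M)$ maps into $H^0(E) \in \Coh^{\leq 1}(X)$, so $H^0(M) \in \Coh^{\leq 1}(X)$, hence $\om^2\tch_1(H^0(M)) = 0$ and $\om^2\tch_1(M) = -\om^2\tch_1(H^{-1}(M)) \geq 0$; similarly for $N$.

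Next I would separate into cases according to whether $\om^2\tch_1(M)$ (equivalently $\om^2\tch_1(N)$, since the two sum to $\om^2\tch_1(E) = -\om^2\tch_1(F) > 0$) is zero or positive. If $\om^2\tch_1(M) = 0$ then $\numob(M) = +\infty$, which is bad; so I must show this cannot happen, i.e.\ $H^{-1}(M)$ cannot equal $F$. If $H^{-1}(M) = F$ then $H^{-1}(N) = 0$, so $N$ is a sheaf in $\Tob$ receiving a surjection from $H^0(E) \in \Coh^{\leq 1}(X)$ modulo $H^0(M)$; tracking through, $N \in \Coh^{\leq 1}(X)$, and then $N$ (or rather the cokernel) injects into $E$ up to the standard manipulation — contradicting $\Hom(\Coh^{\leq 1}(X), E) = 0$. (Alternatively, use slope stability of $F$: if $H^{-1}(M) = F$ then the quotient $E/M$ has $H^{-1} = 0$, and one compares against $\Hom(\Coh^{\leq 1}(X), E)$.) So we may assume $\om^2\tch_1(M) > 0$ and symmetrically $\om^2\tch_1(N) > 0$, whence both $\numob(M)$ and $\numob(N)$ are finite for all $m$.

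Now the heart of the argument is the asymptotic expansion. Using $\tch_2 = \ch_2 - B\ch_1 + \tfrac{B^2}{2}\ch_0$ and scaling $\om \mapsto m\om$, one gets
\[
\numob(M) = \frac{m\,\om\tch_2(M) - \frac{m^3}{6}\om^3\tch_0(M)}{m^2\,\om^2\tch_1(M)} = -\frac{m}{6}\cdot\frac{\om^3\tch_0(M)}{\om^2\tch_1(M)} + \frac{1}{m}\cdot\frac{\om\tch_2(M)}{\om^2\tch_1(M)},
\]
and likewise for $N$. Since $\tch_0(M) = \tch_0(E) - \tch_0(N)$ and $\tch_0(E) = \tch_0(F) = \rk(F) > 0$ while $\tch_0(N) \geq 0$ (as $H^{-1}(N) \in \Fob$ torsion-free, $H^0(N) \in \Tob$, so $\rk N = -\rk H^{-1}(N) \le \rk F$ but could be anything in that range), the leading coefficients are $-\tfrac16 \om^3 \tch_0(M)/\om^2\tch_1(M)$ and $-\tfrac16 \om^3\tch_0(N)/\om^2\tch_1(N)$. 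The comparison $\numob(M) < \numob(N)$ for $m \gg 0$ is then determined first by these leading coefficients, and only if they are equal by the $O(1/m)$ terms. So the decisive step is: show either $\tfrac{\tch_0(M)}{\om^2\tch_1(M)} > \tfrac{\tch_0(N)}{\om^2\tch_1(N)}$, or these are equal and then the subleading comparison of $\om\tch_2$-terms goes the right way. I expect the main obstacle to be exactly this case of equal leading coefficients — there one must use slope stability of $F$ together with the Bogomolov–Gieseker inequality for the semistable factors (invoking $\overline{\Delta}_\om \geq 0$ for $\nuob$-semistable objects, or rather the version for sheaves and for $F$) to pin down the sign of the next term; the hypothesis $\Hom(\Coh^{\leq 1}(X), E) = 0$ and slope stability (not just semistability) of $F$ should be what makes the inequality strict. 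I would handle the generic case by the leading-term comparison, reducing to showing $\rk(M)\cdot\om^2\tch_1(N) > \rk(N)\cdot\om^2\tch_1(M)$ cannot fail simultaneously with the subleading tie, and push the remaining degenerate configurations into an application of Proposition \ref{lemma11}-type codimension arguments or direct Hodge-index estimates.
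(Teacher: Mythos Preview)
Your proposal contains a genuine error that causes you to miss roughly half the argument. You assert that ``$H^0(M)$ maps into $H^0(E) \in \Coh^{\leq 1}(X)$, so $H^0(M) \in \Coh^{\leq 1}(X)$.'' This is false: in the long exact sequence
\[
0 \to H^{-1}(M) \to F \to H^{-1}(N) \overset{\gamma}{\to} H^0(M) \overset{\delta}{\to} H^0(E) \to H^0(N) \to 0,
\]
the kernel of $\delta$ is $\image \gamma$, which need not vanish. In particular, when $H^{-1}(M)=0$ one has $F \hookrightarrow H^{-1}(N)$ and $\image\gamma \cong H^{-1}(N)/F$; this can have positive rank or be a pure $2$-dimensional torsion sheaf, and hence so can $H^0(M)$. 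The paper devotes an entire case (Case~2: $H^{-1}(M)=0$) to this situation, splitting further according to whether $\rk H^0(M) > 0$ (then $\numob(M)<0$ and $\numob(N)>0$ for $m\gg 0$ from the leading term) or $H^0(M)$ is torsion with $\image\gamma$ supported in dimension~$2$ (then $\numob(M)\to 0$ while $\numob(N)\to +\infty$). The hypothesis $\Hom(\Coh^{\leq 1}(X),E)=0$ is used precisely here, to rule out the sub-cases $\image\gamma=0$ and $\image\gamma \in \Coh^{\leq 1}(X)$; it does \emph{not} force $H^0(M)\in\Coh^{\leq 1}(X)$ globally as you claim.

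When $H^{-1}(M)\neq 0$ your asymptotic strategy is on the right track and close to the paper's, but your worry about tied leading coefficients is misplaced. The paper uses slope \emph{stability} of $F$ together with the see-saw principle to obtain the strict inequality $\muob(H^{-1}(M)) < \muob(H^{-1}(N)) \leq 0$, so the leading $O(m)$ terms of $\numob(H^{-1}(M))$ and $\numob(H^{-1}(N))$ are never equal and no Bogomolov--Gieseker or Hodge-index tie-breaker is needed. The remaining work is a chain of elementary inequalities passing from $\numob(H^{-1}(\,\cdot\,))$ back to $\numob(\,\cdot\,)$, absorbing the $O(1/m)$ contributions from $H^0(M)$ and $H^0(N)$; this step uses only that $\tch_0(H^0(M))\geq 0$ and $\omega^2\tch_1(H^0(M))\geq 0$ (from $H^0(M)\in\Tob$), not your stronger and incorrect claim that $H^0(M)\in\Coh^{\leq 1}(X)$. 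Your confused handling of the case $\omega^2\tch_1(M)=0$ (you say it forces $H^{-1}(M)=F$, which is not right) is a symptom of the same underlying mis-identification of $H^0(M)$.
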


Note that, Lemma \ref{lemma8} does not necessarily imply $E$ is $\numob$-stable for $m \gg 0$, since $m$ might depend on the particular short exact sequence \eqref{eq15} being considered.  To show that such $E$ is $\numob$-stable for $m \gg 0$, one might need to bound the Chern classes of all the $M$ or $N$ that appear in such short exact sequences, as is done in \cite[Theorem 1.1(ii)]{LQ}.


Before we prove Lemma \ref{lemma8}, let us make some observations:
\begin{itemize}
\item[(i)] The category $\Bob$ is invariant under replacing $\omega$ by $m\omega$ for any $m>0$.
\item[(ii)] If $A, C$ are two objects in $\Bob$ such that $\omega \tch_1(A), \tch_1(C) \neq 0$, then we have
    \begin{equation}\label{eq16}
      - \frac{1}{\muob (A)} < -\frac{1}{\muob (C)} \text{ if and only if } \nu_{m\omega, B} (A) < \nu_{m\omega, B} (C) \text{ for $m \gg 0$}.
    \end{equation}
    This is immediate from the equation
    \begin{equation}\label{eq19}
      \nu_{m\omega, B}(-) = \frac{m \omega \tch_2(-) - \frac{m^3 \omega^3}{6} \tch_0(-)}{m^2 \omega^2 \tch_1(-)}.
    \end{equation}
\end{itemize}

\begin{proof}[Proof of Lemma \ref{lemma8}]
Consider a short exact sequence \eqref{eq15} where $M, N \neq 0$. To  show that $\numob (M) < \numob (N)$ for $m \gg 0$, let  us divide into two cases:

\paragraph{Case 1:  $H^{-1}(M) \neq 0$.}  By the $\muob$-stability of $H^{-1}(E)$ and the assumption that $\muob (H^{-1}(E))<0$, we have $\omega^2 \tch_1(H^{-1}(M)) < 0$.  This implies $\omega^2 \tch_1 (M) >0$, and so   $\numob (M) < +\infty$ for all $m>0$.   If $\omega^2 \tch_1(N)=0$, then $\numob (N) = +\infty$ for all $m>0$, and so we have $\numob (M) < \numob (N)$ for all $m>0$.  For the remainder of Case 1, let us assume that $\omega^2\tch_1(N) \neq 0$.  Consider the long exact sequence of \eqref{eq15}:
\begin{equation}
0 \to H^{-1}(M) \overset{\al}{\to} H^{-1}(E) \overset{\beta}{\to} H^{-1}(N) \overset{\gamma}{\to} H^0(M) \overset{\delta}{\to} H^0(E) \to H^0(N) \to 0.
\end{equation}

Suppose $\image \gamma =0$.  Then we have $\muob (H^{-1}(M)) < \muob (H^{-1}(N)) <0$, implying $\numob (M) < \numob (N)$ for $m \gg 0$ by \eqref{eq16}.  If $\image \gamma \neq 0$, then we have $\muob (H^{-1}(M)) < \muob (\image \beta)$ as well as
 \begin{gather}
  \muob (\image \beta ) \leq \muob (H^{-1}(N)) \leq 0 \leq \muob (\image \gamma)
 \end{gather}
 by the see-saw principle.  Hence $\muob (H^{-1}(M)) < \muob (H^{-1}(N)) < 0$, and we have
 \begin{equation}\label{eq17}
   \numob (H^{-1}(M)) < \numob (H^{-1}(N)) \text{ for $m \gg 0$}
 \end{equation}
by \eqref{eq16}.  Note that both sides of \eqref{eq17} are $O(m)$ in magnitude.

Now, we have
\begin{equation}\label{eq18}
\numob (H^{-1}(N)) = \numob (N) - \frac{m \omega \tch_2 (H^0(N))}{m^2 \omega^2 \tch_1 (H^{-1}(N)[1])}.
\end{equation}
  On the other hand,
\begin{multline}
\numob (M)
\leq \numob (M)  + \frac{\frac{m^3\omega^3}{6} \tch_0 (H^0(M))}{m^2 \omega^2 \left( \tch_1 (H^{-1}(M)[1])+\tch_1(H^0(M))\right)} \notag\\
= \frac{m\omega \tch_2(M) - \frac{m^3\omega^3}{6} \tch_0(H^{-1}(M)[1])}{m^2 \omega^2 \left( \tch_1 (H^{-1}(M)[1])+\tch_1(H^0(M))\right)} \notag\\
\leq \frac{m\omega \tch_2(M)}{m^2 \omega^2 \left( \tch_1 (H^{-1}(M)[1])+\tch_1(H^0(M))\right)} - \frac{\frac{m^3\omega^3}{6}\tch_0(H^{-1}(M)[1])}{m^2 \omega^2 \tch_1 (H^{-1}(M)[1])} \notag\\
= \frac{m\omega \tch_2(M)}{m^2 \omega^2 \left( \tch_1 (H^{-1}(M)[1])+\tch_1(H^0(M))\right)} \notag\\
- \frac{m\omega \tch_2(H^{-1}(M)[1])}{m^2 \omega^2 \tch_1 (H^{-1}(M)[1])} + \numob (H^{-1}(M)[1]).
\end{multline}
Letting $m \to \infty$ in the above inequalities while noting $\numob (H^{-1}(M)[1]) = \numob (H^{-1}(M))$, together with \eqref{eq17} and \eqref{eq18}, we obtain $\numob (M) < \numob (N)$ for $m \gg 0$.  This completes the proof of Case 1.

 \paragraph{Case 2: $H^{-1}(M) =0$.}  In this case, if $\image \gamma =0$, then $M=H^0(M) \in \Coh^{\leq 1}(X)$, contradicting our assumption $\Hom (\Coh^{\leq 1}(X),E)=0$.  So suppose $\image \gamma \neq 0$.

 If $\rank (H^0(M)) \neq 0$, then $\tch_1(H^0(M))>0$ by the definition of $\Tob$, and we have $\numob (M) = \numob (H^0(M)) <0$ for $m \gg 0$ from \eqref{eq19}, while $\numob (N) >0$ for $m \gg 0$.  That is, $\numob (M) < \numob (N)$ for $m \gg 0$.  Now, suppose  $\rank (H^0(M)) =0$ instead.

 If $\image \gamma \in \Coh^{\leq 1}(X)$, then since $H^{-1}(N)$ is torson-free, we obtain a nonzero class in $\Ext^1 (\image \gamma, H^{-1}(E)) \cong \Hom (\image \gamma, H^{-1}(E)[1])$, again  contradicting our assumption $\Hom (\Coh^{\leq 1}(X), E)=0$.

 If $\image \gamma$ is supported in dimension 2, then so is $H^0(M)$, and so  $\numob (M) = \numob (H^0(M)) \to 0$ as $m \to \infty$, while $\numob (N)>0$ for $m \gg 0$ from \eqref{eq19}.  Hence $\numob (M) < \numob (N)$ for $m \gg 0$.  This completes Case 2.
\end{proof}

The following lemma and corollary are more concrete than Lemma \ref{lemma8} - it tells us that line bundles are $\nuob$-stable when $\omega \to \infty$:

\begin{lemma}\label{lemma9}
Let $E$ be a line bundle with $\omega^2 \tch_1(E)<0$.  Then there exists a constant $m_0>0$, depending only on $c_1(E)$, such that $E[1]$ is $\numob$-stable whenever $m>m_0$.
\end{lemma}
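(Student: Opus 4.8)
The plan is to follow the strategy behind Lemma \ref{lemma8}, but to use the fact that $E$ has rank one in order to bound the Chern characters of \emph{all} subobjects of $E[1]$ in $\Bob$ uniformly, which then produces a single $m_0$ valid for every destabilizing short exact sequence. I would begin with two preliminary remarks. Since $\om^2\tch_1(E)<0$ and $E$ is a line bundle, $E$ is $\muob$-stable of slope $\le 0$, hence $E\in\Fob$ and $E[1]\in\Bob$; set $a:=-\om^2\tch_1(E)>0$ and $b:=-\om\tch_2(E)$. Also $\Hom(\Coh^{\leq 1}(X),E[1])=\Ext^1(\Coh^{\leq 1}(X),E)=0$: for $T$ supported in dimension $\le 1$, Serre duality identifies $\Ext^1(T,E)$ with $H^2(X,E^\ast\otimes T\otimes\om_X)^\vee$, which vanishes because $E^\ast\otimes T\otimes\om_X$ is again supported in dimension $\le 1$.

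The heart of the argument is a structural analysis of an arbitrary short exact sequence $0\to M\to E[1]\to N\to 0$ in $\Bob$ with $M,N\neq 0$. Passing to cohomology sheaves forces $H^0(N)=0$ and an injection $H^{-1}(M)\hookrightarrow E$. If $H^{-1}(M)\neq 0$, then it is a rank-one subsheaf of the line bundle $E$ whose torsion cokernel injects into the torsion-free sheaf $H^{-1}(N)$; hence $H^{-1}(M)=E$, and then $H^{-1}(N)\cong H^0(M)\in\Tob\cap\Fob=0$, forcing $M=E[1]$, a contradiction. So $M$ is a sheaf in $\Tob$, and by the vanishing above $M$ has no nonzero subsheaf in $\Coh^{\leq 1}(X)$, i.e.\ $M\in\Coh^{\geq 2}(X)$; in particular $\om^2\tch_1(M)>0$, so $\numob(M)<+\infty$. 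Writing $G:=H^{-1}(N)\in\Fob$ we obtain a short exact sequence of sheaves $0\to E\to G\to M\to 0$; since $G\in\Fob$ has positive rank, $\om^2\tch_1(G)\le 0$, whence $0<\om^2\tch_1(M)\le a$. (If $\om^2\tch_1(N)=0$ then $\numob(N)=+\infty>\numob(M)$ for all $m$, so we may assume $\om^2\tch_1(N)>0$.)

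Next I would bound $\om\tch_2(M)$ from above by a constant depending only on $c_1(E)$. Let $\bar E\subseteq G$ be the saturation of $E$, giving $0\to E\to\bar E\to M'\to 0$ and $0\to\bar E\to G\to M''\to 0$, where $M'=\mathrm{tors}(M)$ is supported in dimension $2$ and $M''=M/M''$ is torsion-free; note $M''\in\Tob$ since $\Tob$ is closed under quotients. As $\bar E$ is a rank-one subsheaf of $G\in\Fob$, it lies in $\Fob$, and $\bar E^{\ast\ast}$ is a line bundle containing $E$, so $\bar E^{\ast\ast}=E(D)$ for an effective divisor $D$; Lemma \ref{lemma.doubledual} gives $\muob(E(D))\le 0$, i.e.\ $0\le\om^2 D\le a$. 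Since $\tch_1(M')=D$ and $\bar E^{\ast\ast}/\bar E$ is supported in codimension $\ge 2$, a Hodge-Index estimate yields $\om\tch_2(M')=\om\tch_2(\bar E)-\om\tch_2(E)\le \tfrac12\om(\tch_1(E)+D)^2 - \om\tch_2(E)\le \tfrac{a^2}{2\om^3}+b$. For $M''$, its $\muob$-Harder--Narasimhan factors $M''_j$ are $\muob$-semistable of positive slope and positive rank, with $\sum\om^2\tch_1(M''_j)=\om^2\tch_1(M'')\le a$; the Bogomolov inequality $\om\Delta(M''_j)\ge 0$ together with Lemma \ref{lem.compare-discr} gives $\om\tch_2(M''_j)\le \tfrac{(\om^2\tch_1(M''_j))^2}{2\om^3\rk(M''_j)}$, and summing (all summands being $\ge 0$) yields $\om\tch_2(M'')\le\tfrac{a^2}{2\om^3}$. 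Hence $\om\tch_2(M)\le C$ for an explicit constant $C$ depending only on $a,b$, i.e.\ on $c_1(E)$.

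To conclude, I would convert these bounds into $m_0$. Using $\tch(N)=-\tch(E)-\tch(M)$, a short computation shows that when $\om^2\tch_1(M),\om^2\tch_1(N)>0$ the inequality $\numob(M)<\numob(N)$ is implied by $\tfrac{m^2\om^3}{6}>\om\tch_2(E)+a\cdot\tfrac{\om\tch_2(M)}{\om^2\tch_1(M)}$. Since $\om^2\tch_1$ takes values in a fixed discrete subgroup of $\RR$, we have $\om^2\tch_1(M)\ge\ep_0$ for a constant $\ep_0>0$ depending only on $\om,B$; combined with $\om\tch_2(M)\le C$, the right-hand side is at most a constant $C'$ depending only on $c_1(E)$, so $m_0:=\max\{1,\sqrt{6C'/\om^3}\,\}$ works. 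The step I expect to be the main obstacle is the uniform bound on $\om\tch_2(M)$: the rank and $\tch_1$ of $G$ are \emph{not} a priori bounded, so one cannot bound $\om\tch_2(G)$ directly; the trick is to peel off the rank-one piece $\bar E$ (controlled via $\Fob$-membership and the fact that a rank-one reflexive sheaf on a smooth threefold is a line bundle) and treat the torsion-free quotient by Bogomolov. The preliminary reduction $M\in\Tob\cap\Coh^{\geq 2}(X)$, which excludes the tilt-slope-$\infty$ subobjects and pins $M$ down to a sheaf, is exactly where rank one is used.
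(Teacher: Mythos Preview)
Your argument is correct (modulo the typo $M''=M/M''$, which should read $M''=M/M'$), and it follows a genuinely different route from the paper's.

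The paper does not bound $\om\tch_2(M)$ for an arbitrary subobject. Instead, it restricts attention to a \emph{maximal} destabilising subobject $M$ of $E[1]$ with respect to $\numob$ (for some $m>0$). Such an $M$ is automatically $\numob$-semistable, so \cite[Cor.~7.3.2]{BMT} gives $\overline{\Delta}_\om(M)\ge 0$ directly, and one line of algebra yields a bound on the \emph{ratio} $\tfrac{\om\tch_2(M)}{\om^2\tch_1(M)} < \tfrac{a}{2\om^3}$; this is then plugged into the formula for $\numob(M)$ to get a uniform constant upper bound independent of $M$, which $\numob(E[1])$ eventually exceeds. Because the paper bounds the ratio rather than $\om\tch_2(M)$ itself, it never needs the discreteness argument you use to obtain $\om^2\tch_1(M)\ge\ep_0$.

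Your approach, by contrast, treats every nonzero proper subobject $M$ and bounds $\om\tch_2(M)$ via classical sheaf theory: the saturation $\bar E\subset G$ controls the torsion part $M'$ through Hodge index, and the Harder--Narasimhan filtration of $M''$ together with the ordinary Bogomolov inequality for $\mu$-semistable sheaves controls the torsion-free part. This is longer, but it buys two things. First, it is more elementary in that it never invokes the Bogomolov-type inequality for \emph{tilt}-semistable objects, only the classical one for slope-semistable sheaves. Second, it cleanly covers the case $\rk M=0$ (then $M''=0$ and $M=M'$), which the paper's derivation of \eqref{eq21} tacitly sidesteps by dividing through by $\tch_0(H^0(M))$.
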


\begin{proof}
To prove the lemma, it suffices to find a constant $m_0>0$, depending only on $ch(E)$, such that for every short exact sequence in $\Bob$
\begin{equation}\label{eq20}
0 \to M \to E[1] \to N \to 0
\end{equation}
where $M$ is a maximal destabilising subobject of $E[1]$ with respect to $\numob$ for some $m>0$, we have $\numob (M) < \numob (E[1])$ for $m>m_0$.

The long exact sequence of cohomology of \eqref{eq20} is
\begin{equation}
 0 \to H^{-1}(M) \overset{\al}{\to} E \overset{\beta}{\to} H^{-1}(N) \overset{\gamma}{\to} H^0(M) \to 0.
\end{equation}
If $H^{-1}(M)$ is of rank 1, then $\beta$ is the zero map, meaning $H^{-1}(N) \cong H^0(M)$.  This forces $N=0$, contradicting our assumption.  Hence $H^{-1}(M)$ must be zero.

If $\omega^2 \tch_1(M)=0$, then $M=H^0(M)$ must lie in $\Coh^{\leq 1}(X)$, giving us a subobject of $E$ that lies in $\Coh^{\leq 1}(X)$; this contradicts $\Ext^1 (\Coh^{\leq 1}(X), E)=0$.  Hence $\omega^2 \tch_1(M)>0$.  Then, since we are assuming $M$ is destabilising, we have $\numob (N) <\infty$, and so $\omega^2 \tch_1(N)>0$.

Since $M=H^0(M)$ is $\numob$-semistable for some $m$, by \cite[Corollary 7.3.2]{BMT} we have $\overline{\Delta}_{m\omega} (H^0(M)) \geq 0$, i.e.\
\[
(\omega^2 \tch_1 (H^0(M)))^2 \geq 2 \omega^3 \tch_0(H^0(M)) \omega \tch_2 (H^0(M)),
\]
which gives
\begin{equation}\label{eq21}
  \frac{\omega \tch_2 (H^0(M))}{\omega^2 \tch_1 (H^0(M))} \leq \frac{\omega^2 \tch_1 (H^0(M))}{2 \omega^3 \tch_0 (H^0(M))} = \frac{1}{2 \omega^3} \mu_\omega (H^0(M)).
\end{equation}
On the other hand, if we let $\delta = \tch_1(E)$, then since $\tch_1 (H^{-1}(N)) =  \delta + \tch_1 (H^0(M))$, we have
\begin{equation}\label{eq26}
0 < \omega^2 \tch_1 (H^0(M)) = \omega^2 \tch_1 (H^{-1}(N)) - \omega^2 \delta  < - \omega^2 \delta.
\end{equation}
Combining this with \eqref{eq21}, we get
\begin{equation}\label{eq22}
\frac{\omega \tch_2 (H^0(M))}{\omega^2 \tch_1 (H^0(M))}  < -\frac{\omega^2 \delta}{2 \omega^3}.
\end{equation}
Hence, when $m \geq 1$, we have
\begin{align}
  \numob (M)  &= \numob (H^0(M)) \notag \\
  &= \frac{m\omega \tch_2 (H^0(M)) - \frac{m^3 \omega^3}{6} \tch_0 (H^0(M))}{m^2 \omega^2 \tch_1 (H^0(M))}   \notag \\
  &< -\frac{\omega^2 \delta}{m2 \omega^3} - m\frac{\omega^3 \tch_0 (H^0(M))}{6\omega^2 \tch_1 (H^0(M))}   \notag \\
   &\leq -\frac{\omega^2 \delta}{2 \omega^3} - m\frac{\omega^3 \tch_0 (H^0(M))}{6\omega^2 \tch_1 (H^0(M))}   \notag \\
  &\leq -\frac{\omega^2 \delta}{2 \omega^3}. \label{eq24}
\end{align}
Since
\[
\numob (E) = \frac{m\omega \tch_2 (E) - \frac{m^3 \omega^3}{6} \tch_0 (E)}{m^2 \omega^2 \tch_1 (E)},
\]
it is clear that, there is a constant $m_0>0$ depending only on $ch(E)$, hence only on $c_1(E)$, such that $\numob (M) < \numob (E[1])$ whenever $m>m_0$.  This implies that $\numob (M) < \numob (N)$ whenever $m>m_0$, i.e.\ $E[1]$ is $\numob$-stable whenever $m>m_0$.
\end{proof}

%

The following proposition computes an explicit bound for $m_0$ that appeared in Lemma \ref{lemma9}.  Part (c) of the proposition can also be used to verify the inequality in Conjecture \ref{conj.bmt.1.3.1}:

\begin{pro}\label{prop.m-stable-line-bundle}
Let $(X,\omega)$ be a polarised smooth projective threefold, and $m>0$.  Suppose $B=0$, $E$ is a line bundle on $X$, and let $d := c_1(E) \omega^2<0$.    Then for $m>0$,
\begin{itemize}
\item[(a)] $\numob (E[1])=0$ if and only if $m^2 = \frac{3 c_1(E)^2 \omega}{\omega^3}$.
\item[(b)] If $\numob (E[1])=0$, then $\numob (E[1])$ is $\numob$-stable whenever $m^2 \geq \frac{3d^2}{(\omega^3)^2}$.
\item[(c)] $\tch_3 (E[1]) < \frac{m^2\omega^2}{2} \tch_1 (E[1])$ is equivalent to $m^2 > \frac{c_1(E)^3}{3d}$.
\end{itemize}
\end{pro}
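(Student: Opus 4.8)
For parts (a) and (c) the plan is a direct Chern-class computation, and I expect no obstacle. Since $B=0$ one has $\tch_i=ch_i$, and for a line bundle $E$ one has $ch(E)=(1,c_1(E),\tfrac12 c_1(E)^2,\tfrac16 c_1(E)^3)$, so $ch(E[1])=-ch(E)$. Substituting into the definition of $\numob$ gives
\[
\numob(E[1])=\frac{-\tfrac{m}{2}\,\omega c_1(E)^2+\tfrac{m^3\omega^3}{6}}{-m^2\,\omega^2 c_1(E)},
\]
and since the denominator equals $-m^2 d\neq 0$, we have $\numob(E[1])=0$ if and only if the numerator vanishes, i.e.\ (dividing by $m>0$) if and only if $m^2\omega^3=3\,c_1(E)^2\omega$; that is (a). For (c), $ch_3(E[1])=-\tfrac16 c_1(E)^3$ while $\tfrac{(m\omega)^2}{2}\tch_1(E[1])=-\tfrac{m^2}{2}\,c_1(E)\omega^2=-\tfrac{m^2 d}{2}$, so the asserted inequality reads $c_1(E)^3>3m^2 d$, which on dividing by $3d<0$ becomes $m^2>c_1(E)^3/(3d)$.

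For (b) the cleanest plan is to exploit the fact that the two hypotheses are jointly very restrictive. By part (a), $\numob(E[1])=0$ forces $m^2=3\,c_1(E)^2\omega/\omega^3$; substituting into $m^2\ge 3d^2/(\omega^3)^2$ yields $(c_1(E)^2\omega)(\omega^3)\ge d^2=(c_1(E)\omega^2)^2$, whereas the Hodge Index Theorem (in the form used in the proof of Lemma~\ref{lem.compare-discr}) gives the reverse bound $(c_1(E)\omega^2)^2\ge(c_1(E)^2\omega)(\omega^3)$. Hence $(c_1(E)\omega^2)^2=(c_1(E)^2\omega)(\omega^3)$, which is precisely the vanishing $\overline{\Delta}_\omega(E)=0$, so $\overline{\Delta}_{m\omega}(E)=m^4\overline{\Delta}_\omega(E)=0$. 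Since $E$ is a line bundle it is locally free and $\mumob$-stable, and its slope $\omega^2\tch_1(E)=d<0$ places $E[1]$ in $\mathcal B_{m\omega,B}$; I would then conclude that $E[1]$ is $\numob$-stable by \cite[Proposition~7.4.1]{BMT} applied to the shift, exactly as that result is used in the proof of Theorem~\ref{theorem2}. The only point requiring care here is getting the direction of the Hodge Index inequality right.

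An alternative plan for (b), closer to the stated aim of making the constant $m_0$ of Lemma~\ref{lemma9} explicit, is to redo that lemma's argument quantitatively. Take a maximal $\numob$-destabilising subobject $M\subset E[1]$; the proof of Lemma~\ref{lemma9} already provides $H^{-1}(M)=0$, $0<\omega^2\tch_1(M)<-d$, and, via Bogomolov-Gieseker for $M$ together with Lemma~\ref{lem.compare-discr}, the bound $\omega\tch_2(M)/\omega^2\tch_1(M)<-d/(2\omega^3)$. When $\rk M\ge 1$ I would feed these into the formula for $\numob(M)$; using $m^2\ge 3d^2/(\omega^3)^2$ this gives $\numob(M)<0=\numob(E[1])$ at once. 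The hard part is the case $\rk M=0$: then $M$ is a torsion sheaf supported in dimension $2$ and $E[1]/M\cong E'[1]$ for a rank-one torsion-free sheaf $E'\in\Fob$ fitting in $0\to E\to E'\to M\to 0$ in $\Coh(X)$; I would bound $\omega\tch_2(M)=\omega\tch_2(E')-\omega\tch_2(E)$ by applying Bogomolov-Gieseker (again through Lemma~\ref{lem.compare-discr}) to the rank-one sheaf $E'$ together with $\omega^2 c_1(E')\le 0$, and then use $\numob(E[1])=0$ (which fixes $\omega\tch_2(E)=\tfrac{m^2\omega^3}{6}$) to collapse the estimate to $\numob(M)<0$. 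Both plans reach the same conclusion; I would present the Hodge-Index argument, being shorter.
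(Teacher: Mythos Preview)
Your proofs of (a) and (c) are direct Chern-class computations and agree with the paper's.

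For (b), the paper takes your second route: it continues the chain of inequalities from the proof of Lemma~\ref{lemma9} (the bound \eqref{eq24} on $\numob(M)$), and shows that the expression
\[
-\frac{d}{2m\omega^3}-m\,\frac{\omega^3\,ch_0(H^0(M))}{6\,\omega^2 ch_1(H^0(M))}
\]
is nonpositive once $m^2\ge 3d^2/(\omega^3)^2$, using $1/(\omega^2 ch_1(H^0(M)))>-1/d$ and $ch_0(H^0(M))\ge 1$. Your alternative plan is essentially this, with the welcome addition that you isolate and handle the torsion case $\rk M=0$ separately; the paper's displayed computation tacitly divides by $\tch_0(H^0(M))$ and uses $ch_0(H^0(M))\ge 1$, so your explicit treatment of that case via Bogomolov--Gieseker for the rank-one sheaf $E'$ is a genuine clarification.

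Your preferred Hodge Index argument is correct and is a genuinely different, cleaner route. It shows more: under $\numob(E[1])=0$, part (a) pins down $m^2=3\,c_1(E)^2\omega/\omega^3$, while Hodge Index (as in Lemma~\ref{lem.compare-discr}) always gives $m^2\le 3d^2/(\omega^3)^2$; so the hypothesis $m^2\ge 3d^2/(\omega^3)^2$ can only hold with equality, forcing $\overline{\Delta}_\omega(E)=0$ and hence $\overline{\Delta}_{m\omega}(E)=0$. Then \cite[Proposition~7.4.1]{BMT}, applied exactly as in the proof of Theorem~\ref{theorem2}, gives $\numob$-stability of $E[1]$. This reveals that the numerical hypothesis in (b) is really the condition $\overline{\Delta}_\omega(E)=0$ in disguise --- a point the paper notes only in the remark following the proposition --- and it sidesteps the need to analyse destabilising subobjects at all. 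The paper's computational approach, by contrast, is what one would want if the goal were an effective bound without assuming $\numob(E[1])=0$.
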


Note that, if $c_1(E)$ is proportional to $\omega$, then $\overline{\Delta}_\omega (E)=0$, in which case equality holds in Conjecture \ref{conj.bmt.1.3.1} by results in \cite[Section 7.4]{BMT}.

\begin{proof}
(a) That $\numob (E[1])=0$ is equivalent to
\[
 m\omega ch_2 (E[1]) = \frac{m^3\omega^3}{6} ch_0 (E[1]),
\]
i.e.\ $m^2 \omega^3 = 3c_1^2 \omega$, and so the claim follows.

(b) Suppose $\numob (E[1])=0$.  From the proof of Lemma \ref{lemma9}, it suffices to show
\begin{equation}\label{eq25}
  - \frac{d}{m2\omega^3} - m \frac{ \omega^3 ch_0 (H^0(M))}{6 \omega^2 ch_1 (H^0(M))} \leq 0.
\end{equation}
whenever $m^2 \geq \frac{3d^2}{(\omega^3)^2}$, where $M$ is as in the inequalities \eqref{eq24}.

Now, from \eqref{eq26} we have
\[
  \frac{1}{\omega^2 ch_1 (H^0(M))} > - \frac{1}{d},
\]
and hence
\begin{align*}
  -\frac{d}{m2\omega^3} - m \frac{\omega^3 ch_0 (H^0(M))}{6\omega^2 ch_1 (H^0(M))} &<  -\frac{d}{m2\omega^3} + m \frac{\omega^3 ch_0 (H^0(M))}{6d} \\
  &\leq - \frac{d}{m2\omega^3} + \frac{m\omega^3}{6d}.
\end{align*}
Therefore, \eqref{eq25} holds if $- \frac{d}{m2\omega^3} + \frac{m\omega^3}{6d} \leq 0$, which is equivalent to $m^2 \geq \frac{3d^2}{(\omega^3)^2}$, and the claim follows.

(c) That $\tch_3 (E[1]) < \frac{m^2\omega^2}{2} \tch_1 (E[1])$ is equivalent to $- \frac{c_1(E)^3}{6} < - \frac{m^2 \omega^2 c_1(E)}{2}$, i.e.\ $c_1(E)^3 > 3dm^2$.  Since $d<0$, this is equivalent to $m^2 > \frac{c_1(E)^3}{3d}$ as claimed.
\end{proof}



\section{Objects with twice minimal $\omega^2ch_1$}\label{sec.2c}

In \cite[Lemma 7.2.2]{BMT}, tilt-semistable objects $F$ with $\om^2\tch_1(F) \leq c$ are characterised, where
\begin{equation}\label{eqn-c}
c :=\min \{ \om^2\tch_1(F)>0 \mid F \in \Bob\}.
\end{equation}
 In the next proposition, we give some sufficient conditions for a torsion-free sheaf  $E\in \Tob$ with $\om^2\tch_1(E) =2c$ to be tilt-stable.

\begin{proposition}\label{lemma.2c}
Suppose $E \in \Tob$ is a torsion-free sheaf with $\nuob(E)=0$ and $\om^2\tch_1(E)=2c$, where $c$ is defined in  \eqref{eqn-c}.
\begin{enumerate}
\item \label{lemma.2c.part1} If  $\mu_{\om,B,\max}(E) < \frac{\om^3}{\sqrt{3}}$, then $E$ is $\nuob$-stable.

\item \label{lemma.2c.part2} If $\om^3>3\om(\tch_1(M))^2$ for every torsion free slope semistable sheaf $M$ with $\om^2\tch_1(M)=c$, then $E$ is $\nuob$-stable.
\end{enumerate}
\end{proposition}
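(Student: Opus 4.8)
The plan is to argue by contradiction. Suppose $E$ is not $\nuob$-stable; since $\nuob(E) = 0$ and $\overline{\Delta}_\om(E) \geq 0$ (as $E \in \Tob$ is $\nuob$-semistable would be needed, so more carefully one first has to know $E$ is at least $\nuob$-semistable — but in fact part of the claim is stability, so let me instead directly suppose there is a short exact sequence $0 \to A \to E \to B \to 0$ in $\Bob$ with $\nuob(A) \geq \nuob(B)$ and $A, B$ both nonzero). By the see-saw property and $\nuob(E) = 0$, this forces $\nuob(A) \geq 0 \geq \nuob(B)$, and I may take $A$ to be a maximal destabilising subobject, so $A$ is $\nuob$-semistable with $\nuob(A) \geq 0$. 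The key numerical input is $\om^2\tch_1(E) = 2c$: since $c$ is the minimal positive value of $\om^2\tch_1$ on $\Bob$ and this quantity is additive on short exact sequences and nonnegative on objects of $\Bob$, the pair $(\om^2\tch_1(A), \om^2\tch_1(B))$ must be one of $(c,c)$, $(2c, 0)$, or $(0, 2c)$. I would rule out $(0,2c)$ and $(2c,0)$: if $\om^2\tch_1(A) = 0$ then $\nuob(A) = +\infty$ would be consistent, but then $A$ is (up to the characterisation in \cite[Lemma 7.2.2]{BMT} / \cite[Remark 3.2.2]{BMT}) built from $\Coh^{\leq 1}$ and shifts of $\mu$-semistable sheaves of slope $0$; since $E$ is a torsion-free sheaf in $\Tob$, $H^{-1}(E) = 0$, which restricts what subobjects with $H^{-1} \neq 0$ can occur, and one checks $A \in \Coh^{\leq 1}(X)$ as a subsheaf of the torsion-free sheaf $E$, forcing $A = 0$. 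The case $\om^2\tch_1(B) = 0$ similarly gives $\nuob(B) = +\infty \geq \nuob(A)$, no contradiction yet — so here one instead uses that $B$ is then a quotient with $\om^2\tch_1(B)=0$ and $\nuob(B)=+\infty$; combined with $\om^2\tch_1(A)=2c \neq 0$ we get $\nuob(A) < \infty = \nuob(B)$, which is consistent with semistability, so this case does NOT lead to a contradiction directly and must be handled by a finer argument — this is where I expect the real work to be.

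So the substantive case is $\om^2\tch_1(A) = \om^2\tch_1(B) = c$, and here I would invoke the characterisation of tilt-semistable objects with $\om^2\tch_1 = c$ from \cite[Lemma 7.2.2]{BMT}: since $A$ is $\nuob$-semistable with $\om^2\tch_1(A) = c$, $A$ is of a very restricted form — either a $\mu_{\om,B}$-semistable torsion-free sheaf with $\om^2\tch_1 = c$ (possibly modified in codimension $\geq 2$), or $G[1]$ for such a $G$, or an extension involving $\Coh^{\leq 1}$. Using $\nuob(A) \geq 0$ and $\nuob(E) = 0$ together with the explicit formula $\nuob = (\om\tch_2 - \tfrac{\om^3}{6}\tch_0)/(\om^2\tch_1)$, I would translate $\nuob(A) \geq \nuob(B)$ into an inequality among $\om\tch_2$ and $\tch_0$ of $A$ and of $H^{-1}(E), H^0(E)$. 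Then the hypotheses enter: in part \eqref{lemma.2c.part2}, the assumption $\om^3 > 3\om(\tch_1(M))^2$ for every slope-semistable $M$ with $\om^2\tch_1(M) = c$ is precisely what is needed to bound $\om\tch_2(A)$ via the Bogomolov–Gieseker inequality $\overline{\Delta}_\om(A) \geq 0$ (equivalently $\om\Delta(A) \geq 0$, Lemma \ref{lem.compare-discr}) applied to the sheaf underlying $A$; in part \eqref{lemma.2c.part1}, the bound $\mu_{\om,B,\max}(E) < \om^3/\sqrt{3}$ plays the analogous role after noting that the sheaf underlying $A$ (being a subsheaf-like piece of $E$ or of $H^{-1}(E)$) has maximal slope bounded by $\mu_{\om,B,\max}(E)$, and $\sqrt{3}$ appears because $\overline{\Delta}_\om \geq 0$ reads $(\om^2\tch_1)^2 \geq 2\om^3\tch_0 \cdot \om\tch_2$ and combining with $\nuob = 0$ on $E$ gives a quadratic constraint whose discriminant condition is exactly $\mu^2 < \om^3 \cdot (\text{something})/3$.

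Concretely, for part \eqref{lemma.2c.part2} I would: (i) reduce to the case $\om^2\tch_1(A) = \om^2\tch_1(B) = c$; (ii) show the "sheafy part" of $A$ is a slope-semistable sheaf (or torsion sheaf) of $\om^2\tch_1 = c$, so the hypothesis applies to it; (iii) write out $\nuob(A) = \nuob(B)$ using $\nuob(E)=0$ and additivity, obtaining $\om\tch_2(A)\cdot c^{-1} = \om\tch_2(B)\cdot c^{-1}$, hence $2\om\tch_2(A) = \om\tch_2(E) = \tfrac{\om^3}{3}\tch_0(E)$; (iv) apply $\overline{\Delta}_\om(A) \geq 0$ and $\overline{\Delta}_{\om}$ of the complementary piece $\geq 0$, sum them, and use $\om^3 > 3\om\tch_1(\cdot)^2$ together with the Hodge Index computation of Lemma \ref{lem.compare-discr} to force $\overline{\Delta}_\om(E) > 0$, contradicting $\overline{\Delta}_\om(E) = 0$ — wait, $E$ need not have zero discriminant; so instead (iv') derive a strict inequality showing $\nuob(A) < \nuob(B)$ outright, contradicting the destabilising assumption. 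Part \eqref{lemma.2c.part1} then follows because $\mu_{\om,B,\max}(E) < \om^3/\sqrt{3}$ implies, for any slope-semistable $M$ with $\om^2\tch_1(M) = c$ appearing as an HN factor inside $E$ or $H^{-1}(E)$, that $\om(\tch_1(M))^2 \leq (\om^2\tch_1(M))^2/\om^3 = c^2/\om^3$ and $c = \om^2\tch_1(M) = \mu \cdot \rk \leq \mu_{\om,B,\max}(E)$, so $3\om(\tch_1(M))^2 \leq 3c\,\mu_{\om,B,\max}(E)/\om^3 < 3c/\sqrt{3}\cdot 1 \cdots$ — the exact chain needs the Hodge Index bound $\om(\tch_1 M)^2 \le \mu_{\om,B,\max}(E)^2 \rk(M)^2/\om^3$ and $\rk(M)\,\mu \le$ the slope bound, yielding $3\om(\tch_1 M)^2 < \om^3$, which reduces part \eqref{lemma.2c.part1} to part \eqref{lemma.2c.part2}. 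The main obstacle I anticipate is step (ii)/(iii): pinning down the precise structure of the maximal destabilising subobject $A$ with $\om^2\tch_1(A) = c$ (in particular handling the case where $A = G[1]$ for a reflexive sheaf $G$, using Proposition \ref{coro1}) and correctly bookkeeping the cohomology-sheaf contributions to $\om\tch_2$ and $\tch_0$ so that the Bogomolov–Gieseker inequalities combine cleanly — this is delicate because $\tch_2$ receives contributions from both $H^{-1}$ and $H^0$ of the relevant objects, and one must be careful that the torsion-freeness of $E$ and membership in $\Tob$ genuinely eliminate the degenerate cases.
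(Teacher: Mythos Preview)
Your overall framework is right, but there are two concrete issues.

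\textbf{The $(2c,0)$ case is already done.} You assumed at the outset that $A$ is destabilising, i.e.\ $\nuob(A) \geq \nuob(B)$. When $\om^2\tch_1(B) = 0$ you correctly compute $\nuob(A) < +\infty = \nuob(B)$. That \emph{is} the contradiction; nothing further is needed. The real work is entirely in the $(c,c)$ case, as you identify afterwards.

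\textbf{In the $(c,c)$ case, $A$ is automatically a sheaf.} Since $E$ is a sheaf in $\Tob$ we have $H^{-1}(E)=0$, and the long exact sequence of cohomology for $0\to A\to E\to B\to 0$ in $\Bob$ reads
\[
0 \to H^{-1}(B) \to H^0(A) \overset{\beta}{\to} E \to H^0(B)\to 0,
\]
with $H^{-1}(A)=0$. So the case $A=G[1]$ that worries you cannot occur; there is no delicate bookkeeping between $H^{-1}$ and $H^0$. Now $A$ is $\nuob$-semistable with $\om^2\tch_1(A)=c$, so \cite[Lemma 7.2.2]{BMT} places it in $\mathfrak D$; since $A$ sits in an exact sequence between the torsion-free sheaves $H^{-1}(B)$ and $E$, it is a torsion-free slope semistable sheaf.

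From here the paper's argument is much shorter than what you sketch. The single inequality you need from $\nuob(A)\geq 0$ is
\[
\om\tch_2(A) \;\geq\; \tfrac{\om^3}{6}\,\tch_0(A).
\]
For part (2), pair this with the ordinary Bogomolov--Gieseker bound $\om\tch_2(A) \leq \tfrac{\om(\tch_1(A))^2}{2\,\tch_0(A)}$ to get $\om^3\,\tch_0(A)^2 \leq 3\,\om(\tch_1(A))^2$; since $\tch_0(A)\geq 1$, this contradicts the hypothesis directly. No summing of discriminants or comparison with $\overline{\Delta}_\om(E)$ is needed.

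For part (1), the paper does \emph{not} reduce to part (2). Instead, it pairs the displayed inequality with $\overline{\Delta}_\om(A)\geq 0$ (i.e.\ $\om\tch_2(A) \leq \tfrac{(\om^2\tch_1(A))^2}{2\om^3\tch_0(A)}$) to obtain $\muob(A) \geq \om^3/\sqrt{3}$. The hypothesis then gives $\muob(A) > \mu_{\om,B,\max}(E)$, so by slope semistability of $A$ we have $\Hom_{\Coh(X)}(A,E)=0$, forcing $\beta=0$ and hence $A\cong H^{-1}(B)$, which is impossible since one lies in $\Tob$ and the other in $\Fob$. Your proposed reduction of (1) to (2) via Hodge Index can be made to work, but it is more circuitous than this $\Hom$-vanishing argument, and the chain of inequalities you wrote is not yet correct as stated (e.g.\ $c = \mu\cdot\rk$ conflates a slope with a degree).
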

\begin{proof}
Suppose we have a destabilizing short exact sequence in $\Bob$
\begin{equation}\label{eq27}
0 \to M \to E \to N \to 0
\end{equation}
with $\nuob(M) \geq \nuob(E)=0 \geq \nuob(N)$, and we may assume $M$ is $\nuob$-stable by replacing it with its maximal destabilizing subobject in $\Bob$ with respect to $\nuob$-stability.
The long exact sequence associated to \eqref{eq27} is
\begin{equation}\label{eqn30}
 0 \to H^{-1}(N) \overset{\al}{\to} H^0(M) \overset{\beta}{\to} E  \overset{\gamma}{\to} H^0(N) \to 0
\end{equation}
and we identify $M=H^0(M)$.

Since $\om^2\tch_1(E) = 2c$, the possibilities for $(\om^2\tch_1(M), \om^2\tch_1(N))$ are $(2c,0)$, $(c, c)$, and $(0,2c)$. The cases $(2c,0)$ and $(0,2c)$ are easily eliminated as possibilities as follows:

\begin{itemize}
\item case $(0,2c$):  Since $M=H^0(M) \in \Tob$, the condition $\omega^2\tch_1(M)=0$ forces $M$ to be torsion. Since $E$ is torsion free, we have $\beta =0$ in \eqref{eqn30}, hence $H^{-1}(N)\cong H^0(M)$, which forces $M=H^0(M)=0$, contrary to assumption.
\item case $(2c,0)$: in this case $\nuob(N)=\infty$ and equation \eqref{eqn30} cannot be a destabilizing sequence.
\end{itemize}

We now consider the case $(c,c)$. Since $M$ is $\nuob$-stable with $\om^2\tch_1(M)=c$, by \cite[Lemma 7.2.2]{BMT} we know that $H^0(M)$ lies in the set $\mathfrak{D}$ described in \cite[Section 7.2]{BMT}. Since $H^{-1}(N)$ and $E$ are torsion free, we have $M$ is torsion free, and by the description of elements of $\mathfrak{D}$ we have that $M$ is  a torsion-free slope semistable sheaf.

 Since $M$ is $\nuob$-stable, by \cite[Cor. 7.3.2]{BMT} we also have
\begin{equation}\label{eqn-bg2}
\om\tch_2(M)\leq \frac{(\om^2\tch_1(M))^2}{2\om^3\tch_0(M)}.
\end{equation}

Since $\nuob(E)=0$, the inequality $\nuob(M)\geq 0$ implies
\begin{equation}\label{eqn31}
\om\tch_2(M) \geq \frac{\om^3}{6}\tch_0(M).
\end{equation}
Combining  equation \eqref{eqn31} with equation \eqref{eqn-bg2} we get

\begin{equation}
\frac{\om^3}{6}\tch_0(M) \leq \frac{(\om^2\tch_1(M))^2}{2\om^3\tch_0(M)}
\end{equation}
or $\frac{\om^3}{\sqrt{3}} \leq \muob(M)$.

The hypothesis $\mu_{\om,B,\max}(E) < \frac{\om^3}{\sqrt{3}}$ implies $\mu_{\om,B,\max}(E) < \muob(M)$. Since $M$ is slope semistable, this inequality implies $\Hom_{\Coh(X)}(M, E)=0$ and hence $\beta=0$ in \eqref{eqn30}. We then get a contradiction as in the $(0,2c)$ case. This completes the proof of part \eqref{lemma.2c.part1}.

To prove part (2), suppose $E$ has a destabilising subobject $M$ as in \eqref{eq27}.  The usual Bogomolov-Giesker inequality gives us
\begin{equation}\label{eqn-bg}
\om\tch_2(M)\leq \frac{\om(\tch_1(M))^2}{2\tch_0(M)}.
\end{equation}
Combining \eqref{eqn-bg} with \eqref{eqn31}, we get
\begin{equation}
\frac{\om^3}{6}\tch_0(M) \leq \om\tch_2(M)\leq \frac{\om(\tch_1(M))^2}{2\tch_0(M)},
\end{equation}
and hence $\om^3(\tch_0(M))^2 \leq 3\om(\tch_1(M))^2$. Since $M$ is a torsion-free sheaf, we have $\tch_0(M) \geq 1$, and hence $\om^3 \leq 3\om(\tch_1(M))^2$.  Part (2) thus follows.
\end{proof}


In \cite[Example 7.2.4]{BMT},  Conjecture \ref{conj.bmt.1.3.1} was studied for rank-one sheaves of the form $E = L \otimes I_C$, where $L$ is a line bundle, $I_C$ the ideal sheaf of a curve on $X$, and $\omega^2 c_1(E)=c$. In the next proposition, following the ideas in \cite[Remark 2.10]{Toda}, we study  rank-one sheaves of the form $E=L^2 \otimes I_C$ where  $\omega^2 c_1(E)=2c$.  In particular, we apply Proposition \ref{lemma.2c} to find a condition when  $E$ is $\nuob$-semistable.  In part (4) of the proposition, we are able to verify Conjecture \ref{conj.bmt.1.3.1} for these particular objects $E$ by reducing the conjecture to the classical Castelnuovo inequality.

\begin{proposition}
Let $B=0$. Suppose $\Pic(X)$ is generated by an ample line bundle $L$ on $X$. Let $h:=c_1(L)$, $D:=h^3$, and $\om:=mh$ for some positive  $m \in \QQ$.  Suppose $C \subset X$ be a curve in $X$ of degree $d :=h\cdot [C]=h\cdot ch_2(\OO_C)$. Let $I_C$ be the ideal sheaf of $C \subset X$, and let  $$E:=L^2 \otimes I_C.$$
\begin{enumerate}
\item If $\nuz(E)=0$ then $m^2=12-\frac{6d}{D}$ and $d < 2D$.  The converse also holds.
\item If $\nuz(E)=0$ and $d<\frac{3}{2}D$, then $E$ is $\nuz$-stable.
\item If $-ch_3(\OO_C) \leq \frac{4}{3}d$ and $\nuz(E)=0$ then $E$ satisfies the inequality in Conjecture \ref{conj.bmt.1.3.1}.
\item If $d \leq D$, and $\nuz(E)=0$, and $X \subset \PP^4$ is a hypersurface of degree $D$, then $E$ satisfies the inequality in Conjecture \ref{conj.bmt.1.3.1}.
\end{enumerate}

\end{proposition}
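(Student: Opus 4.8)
The plan is to dispatch the four parts in order, using a single Chern character computation throughout. Since $B=0$ and $ch(I_C)=(1,0,-[C],-ch_3(\OO_C))$, multiplying by $e^{2h}=1+2h+2h^2+\tfrac43h^3$ gives $ch(E)=\big(1,\,2h,\,2h^2-[C],\,\tfrac43D-2d-ch_3(\OO_C)\big)$, where I use $h\cdot[C]=d$ and $h^3=D$; in particular $\om^2ch_1(E)=2m^2D$, $\om^3ch_0(E)=m^3D$ and $\om\,ch_2(E)=m(2D-d)$. I would first note that $E=L^2\otimes I_C$ is a $\mu_\om$-stable torsion-free sheaf with $\mu_\om(E)=2m^2D>0$, so $E\in\Tob$. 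For (1), $\nuz(E)=0$ says $\om\,ch_2(E)=\tfrac{\om^3}{6}ch_0(E)$, i.e.\ $m(2D-d)=\tfrac{m^3D}{6}$; dividing by $m>0$ yields $m^2=12-\tfrac{6d}{D}$, and $m^2>0$ is equivalent to $d<2D$. Running this equivalence backwards gives the converse.

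For (2) the plan is to invoke Proposition \ref{lemma.2c}(2). The key preliminary is to pin down the constant $c$ of \eqref{eqn-c}: because $\Pic(X)=\ZZ L$, every object of $\Bob$ has $ch_1\in\ZZ h$, so $\om^2ch_1$ ranges over $m^2D\cdot\ZZ$, and as $L\in\Tob$ realises the value $m^2D$ we get $c=m^2D$; hence $\om^2ch_1(E)=2c$, so the hypotheses of Proposition \ref{lemma.2c} are in place. A torsion-free slope semistable sheaf $M$ with $\om^2ch_1(M)=c$ then has $ch_1(M)=h$, so $\om(ch_1(M))^2=mD$ and $\om^3=m^3D$, and the condition $\om^3>3\om(ch_1(M))^2$ of Proposition \ref{lemma.2c}(2) becomes simply $m^2>3$. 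Feeding in $m^2=12-\tfrac{6d}{D}$ from part (1), this is exactly $d<\tfrac32D$, which proves (2).

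For (3) I would just compare the two sides of \eqref{bmt-conj} for $E$: from the computation above, $ch_3(E)=\tfrac43D-2d-ch_3(\OO_C)$ while $\tfrac{\om^2}{18}ch_1(E)=\tfrac{m^2D}{9}$, and substituting $m^2D=12D-6d$ (legitimate since $\nuz(E)=0$) and cancelling, the inequality $ch_3(E)\le\tfrac{\om^2}{18}ch_1(E)$ is seen to be equivalent to $-ch_3(\OO_C)\le\tfrac43d$, which is the hypothesis; so $E$ satisfies Conjecture \ref{conj.bmt.1.3.1}. For (4) the plan is to produce the bound $-ch_3(\OO_C)\le\tfrac43d$ and then quote part (3). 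Hirzebruch--Riemann--Roch on the threefold gives $\chi(\OO_C)=ch_3(\OO_C)-\tfrac12K_X\cdot[C]$, and since $X\subset\PP^4$ has degree $D$, adjunction gives $K_X=(D-5)h$, hence $K_X\cdot[C]=(D-5)d$ and $-ch_3(\OO_C)=p_a(C)-1-\tfrac{(D-5)d}{2}$, where $p_a(C)=1-\chi(\OO_C)$. The classical Castelnuovo inequality $p_a(C)\le\binom{d-1}{2}$ then yields $-ch_3(\OO_C)\le\tfrac12(d-1)(d-2)-1-\tfrac{(D-5)d}{2}$, and a short manipulation shows the right-hand side is $\le\tfrac43d$ precisely when $d-\tfrac23\le D$, i.e.\ (for integers $d,D$) when $d\le D$. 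Hence $-ch_3(\OO_C)\le\tfrac43d$, and part (3) finishes the proof.

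I expect the main obstacle to be part (4): the Riemann--Roch and adjunction bookkeeping must be carried out with the correct sign and the factor $\tfrac12$ in the canonical term, and Castelnuovo's inequality must be applied in a form valid for the curve $C$ at hand (one should either restrict to integral $C$ or use the version of the bound for $1$-dimensional subschemes of fixed degree). The other three parts are, by contrast, routine once $ch(E)$ and the value of $c$ have been computed: they amount to rewriting $\nuz$ and the inequality \eqref{bmt-conj}, together with a single application of Proposition \ref{lemma.2c}.
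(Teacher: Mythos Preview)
Your proof is correct and follows essentially the same approach as the paper: the same Chern character computation, the same appeal to Proposition~\ref{lemma.2c}(2) via the identification $c=m^2D$ and the reduction to $m^2>3$, the same rewriting of \eqref{bmt-conj} as $-ch_3(\OO_C)\le\tfrac{4}{3}d$, and the same use of Hirzebruch--Riemann--Roch together with the Castelnuovo bound $p_a(C)\le\tfrac12(d-1)(d-2)$ for part (4). Your caveat about the applicability of Castelnuovo's inequality to non-integral $C$ is well taken; the paper invokes it without comment, so this is a genuine implicit restriction rather than a gap in your argument.
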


\begin{proof} We follow the argument in \cite[Example 7.2.4]{BMT}.  To start with, note that
\begin{align*}
    ch_1(E) &=2h, \\
     ch_2(E) &=ch_0(L^2)ch_2(I_C) +ch_1(L^2)ch_1(I_C)  + ch_2(L^2)ch_0(I_C)=-[C] + 2h^2, \text{and} \\
ch_3(E)&=ch_3(L^2)ch_0(I_C) + ch_2(L^2)ch_1(I_C) + ch_1(L^2)ch_2(I_C) + ch_0(L^2)ch_3(I_C) \\
&= \frac{4D}{3}-2d  -ch_3(O_C).
\end{align*}

For part (1), note that  $\nuz(E)=0$ is equivalent to  $mh\cdot ch_2(E)=\frac{m^3h^3}{6}$, i.e.\ $-d + 2D = \frac{m^2D}{6}$, i.e.\  $m^2=12-\frac{6d}{D}$. Since $m^2>0$, it follows that   $d<2D$.

To prove part (2), we use of Proposition \ref{lemma.2c}. In our situation,  $c=\om^2h=m^2h^3$. Take any torsion-free slope semistable sheaf $M$ with $\omega^2 ch_1(M)=c$.  Then $ch_1(M)=h$.   By part \eqref{lemma.2c.part2} of Proposition \ref{lemma.2c}, $E$ would be $\nuob$-stable if we can show $\om^3 > 3\om(ch_1(M))^2$ i.e. $m^3h^3 > 3mh(h^2)$, or $m^2 > 3$; since $m^2=12-\frac{6d}{D}$, this is  equivalent to $d<\frac{3}{2}D$.

For part (3), just note that the inequality in Conjecture \ref{conj.bmt.1.3.1} now reads
 \begin{equation}\label{eqn.2c-conj}
\frac{4D}{3}-2d  -ch_3(O_C) \leq \frac{m^2\cdot 2D}{18}=\frac{4D}{3}-\frac{2d}{3}
\end{equation}
or, equivalently,
\begin{equation}
-ch_3(\OO_C) \leq \frac{4}{3}d.
\end{equation}
(Note that this is a stronger requirement than \cite[Equation (32)]{BMT}.)

For part (4), if $X \subset \PP^4$ is a hypersurface of degree $D$, then by Hirzebruch-Riemann-Roch we have
$$1-g = \chi(\OO_C) = ch_3(\OO_C) + \frac{d}{2}(5-D).$$ Then  \eqref{eqn.2c-conj} becomes
\begin{equation}\label{eqn.g-bound}
g \leq \frac{dD}{2}-\frac{7}{6}d+1.
\end{equation}

When $d\leq D$, the bound on $g$ in equation \ref{eqn.g-bound} follows from the Castelnuovo inequality $g \leq \frac{1}{2}(d-1)(d-2)$. (However in general, we only know $d <2D$, and also we do not know if $E$ is $\nuz$-stable.)
\end{proof}

\section{Tilt-unstable objects}\label{sec.miro-roig}

In this section, we use known inequalities between Chern characters of reflexive sheaves on $\PP^3$ to describe many slope stable reflexive sheaves $E \in \Bob$ that are tilt-unstable. We base our examples on the following result of Mir\'{o}-Roig:

\begin{proposition}\cite[Prop. 2.18]{MR1}\label{prop.MR-ch-0}
Let $X=\PP^3$ and $B=0$. For all $c_2, c_3$ such that $c_2 \geq 3$, $c_3$ is even, and $-c_2^2+c_2 \leq c_3\leq 0$, there exists a rank $3$ stable reflexive sheaf on $\PP^3$ with first through third Chern classes $(0, c_2, c_3)$.
\end{proposition}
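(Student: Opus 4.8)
\emph{Setup.} This is a theorem of Mir\'o-Roig, and the natural proof runs through the Serre correspondence for reflexive sheaves on $\PP^3$. The three hypotheses are exactly the a priori constraints on a rank-$3$ reflexive sheaf $E$ on $\PP^3$ with $c_1(E)=0$: Hirzebruch--Riemann--Roch gives $\chi(E)=3-2c_2(E)+\tfrac12 c_3(E)$, so $c_3(E)$ must be even; and $-c_2^2+c_2\le c_3(E)\le 0$ is a bound on $c_3$, the rank-$3$ counterpart of Hartshorne's bound for rank-$2$ reflexive sheaves. So one must realise every admissible triple.

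\emph{Construction.} To realise $(0,c_2,c_3)$, I would build $E$ from a curve: for $n\gg 0$, two general sections of $E(n)$ fit into an exact sequence
\[
0 \to \OO_{\PP^3}^{\oplus 2} \to E(n) \to I_Y(3n) \to 0,
\]
with $Y\subset\PP^3$ a pure $1$-dimensional subscheme, and conversely such an $E$ is recovered from $Y$ (of degree $d$ and arithmetic genus $g$), the twist $n$, and an extension class satisfying a nondegeneracy (Cayley--Bacharach-type) condition that makes the two induced sections suitably general. A Chern-character computation expresses $c_2(E)$ and $c_3(E)$ as explicit functions of $(d,g,n)$, with $c_3(E)$ affine in $g$; inverting it gives the $(d,g)$ and $n$ that match the target, one also arranging that $Y$ be non-lci at the correct number of points when $c_3<0$ so that $E$ fails to be locally free. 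The substantive content is then to produce, for \emph{every} admissible $(c_2,c_3)$, an actual space curve $Y$ of the prescribed $(d,g)$ meeting the nondegeneracy condition; this is handled by the known existence results for curves of given degree and genus in $\PP^3$, together with liaison. Once $Y$ is pure $1$-dimensional, reflexivity of $E(n)$ --- hence of $E$ --- is automatic from the depth estimate $\mathrm{depth}_x E(n)\ge\min(\mathrm{depth}_x\OO_{\PP^3}^{\oplus2},\ \mathrm{depth}_x I_Y(3n))\ge 2$ at closed points.

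\emph{Stability and the main obstacle.} It remains to show that for general such $Y$ and general extension class the sheaf $E$ is stable. Since $\mu(E)=0$, a rank-$1$ destabilising subsheaf would be a line bundle $\OO_{\PP^3}(t)$ with $t\ge 0$, i.e.\ a nonzero section of $E(-t)$; these are governed by $H^0$ of twists of $I_Y$ and can be forced to vanish by keeping $Y$ off low-degree surfaces. A rank-$2$ destabiliser is then ruled out by a dimension count on the parameter space of admissible pairs $(Y,\text{extension class})$, showing the destabilising locus is a proper closed subset. The real difficulty --- and where I expect the bulk of Mir\'o-Roig's argument to lie --- is managing all of this uniformly across the whole admissible range, and especially near the boundary $c_3=-c_2^2+c_2$, where $Y$ is forced to have maximal genus (or the maximal number of non-lci points) and there is the least room left to make it general.
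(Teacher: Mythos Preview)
The paper does not prove this proposition; it is simply quoted as \cite[Prop.~2.18]{MR1} and then used in the next proposition. So there is no ``paper's own proof'' to compare your attempt against. Your sketch of the Serre-correspondence approach is a plausible outline of how Mir\'o-Roig's original argument goes, but for the purposes of this paper the statement is an imported black box.
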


\begin{proposition}
Let $X=\PP^3$, $\om=c_1(\OO(1))$, and $B=0$. Let $n$ and $m$ be positive integers of the same parity and with $3n^2 - m^2 \geq 6$. Let $E$ be a slope stable rank $3$ reflexive sheaf on $\PP^3$ with $c_1(E)=0$, $c_2(E)=\frac{3n^2-m^2}{2}$ and $c_3(E)$ an even integer satisfying $$- (2n^3 +\frac{2nm^2}{3} )> c_3(E) \geq -\frac{(9n^4-6n^2m^2 + m^4 - 6n^2+2m^2)}{4}$$ (such an $E$ exists by Proposition  \ref{prop.MR-ch-0}). Let  $F=E(-n)[1]$.
Then $F$ is tilt-unstable.
\end{proposition}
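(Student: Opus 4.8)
The plan is to compute the Chern character of $F$, verify that $F\in\Bob$, and then contradict the Bogomolov--Gieseker inequality of Conjecture~\ref{conj.bmt.1.3.1} (a theorem for $\PP^3$ by \cite{BMT},\cite{Mac}) at the rescaling of $\om$ for which $F$ has vanishing tilt-slope.

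\textbf{Step 1: Chern data.} Since $B=0$ we have $\tch=\ch$, and writing $h=c_1(\OO(1))=\om$ one has $\ch(E)=\bigl(3,\,0,\,-\tfrac{3n^2-m^2}{2}h^2,\,\tfrac{c_3(E)}{2}h^3\bigr)$. Multiplying by $e^{-nh}$ and negating (for the shift $[1]$) gives
\[
\ch(F)=\Bigl(-3,\ 3nh,\ -\tfrac{m^2}{2}\,h^2,\ -\tfrac{c_3(E)+2n^3-nm^2}{2}\,h^3\Bigr).
\]

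\textbf{Step 2: $F\in\Bob$ and its numerics.} As $E$ is slope stable, so is $E(-n)$, with $\muob(E(-n))=-n<0$; hence $E(-n)\in\Fob$ and $F=E(-n)[1]\in\Bob$, with $H^{-1}(F)=E(-n)$ reflexive (as Proposition~\ref{coro1} requires) and $H^0(F)=0$. Recall $\Bob$ is unchanged under $\om\mapsto k\om$. A direct computation gives $\nu_{k\om,0}(F)=\dfrac{k^2-m^2}{6nk}$, which vanishes precisely at $k=m$, and $\overline{\Delta}_{k\om}(F)=3(3n^2-m^2)k^4>0$ using $3n^2-m^2\ge 6$. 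In particular $\overline{\Delta}\ge0$ alone does not obstruct tilt-semistability, so one must use the sharper $\ch_3$-inequality.

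\textbf{Step 3: the contradiction.} Suppose $F$ is tilt-semistable. For $m\ge2$ one has $\nu_{k\om,0}(F)<0$ for $1\le k<m$, so to invoke Conjecture~\ref{conj.bmt.1.3.1} one must first show that tilt-semistability of $F$ persists along the ray $\{k\om : 1\le k\le m\}$, i.e.\ that no wall is crossed before reaching $m\om$. I expect this to be the main obstacle; it should follow from the local finiteness of tilt-stability walls together with the bound $\overline{\Delta}_{k\om}(F)=3(3n^2-m^2)k^4$, which pins down the finitely many possible numerical types of a destabilising subobject, or else from the tilt-slope-free Bogomolov--Gieseker inequality for tilt-semistable objects on $\PP^3$ that is a consequence of Conjecture~\ref{conj.bmt.1.3.1} (and which could then be applied directly at $\om$). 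Granting this, $F$ is tilt-semistable for $m\om$ with $\nu_{m\om,0}(F)=0$, so Conjecture~\ref{conj.bmt.1.3.1} gives $\tch_3(F)\le\frac{(m\om)^2}{18}\tch_1(F)=\frac{3nm^2}{18}=\frac{nm^2}{6}$. On the other hand $c_3(E)<-\bigl(2n^3+\tfrac{2nm^2}{3}\bigr)$ forces $c_3(E)+2n^3-nm^2<-\tfrac{5nm^2}{3}$, so $\tch_3(F)=-\tfrac12\bigl(c_3(E)+2n^3-nm^2\bigr)>\tfrac{5nm^2}{6}>\tfrac{nm^2}{6}$, a contradiction. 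Hence $F$ is tilt-unstable. (When $m=1$ the ray issue is vacuous, since then $\om$ itself already gives $\nuob(F)=0$; Steps~1, 2 and the final estimate are routine.)
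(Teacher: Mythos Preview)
Your Steps 1 and 2 and the final inequality in Step 3 are correct and match the paper exactly: the paper likewise computes $\ch_3(F)=\tfrac{n^3}{2}-nc_2-\tfrac{c_3}{2}$, checks that $\nu_{m\om,0}(F)=0$ is equivalent to $c_2=\tfrac{3n^2-m^2}{2}$, verifies $F\in\Bob$ via $\muob(E(-n))=-n<0$, and then shows that the hypothesis on $c_3(E)$ forces $\ch_3(F)>\tfrac{(m\om)^2}{18}\ch_1(F)$, contradicting Conjecture~\ref{conj.bmt.1.3.1} (a theorem on $\PP^3$).

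The divergence is in your Step 3. The paper does \emph{not} attempt any wall-crossing from $\om$ to $m\om$; it works entirely at the polarisation $m\om$ and concludes that $F$ is not $\nu_{m\om,0}$-semistable. The parameter $m$ in the statement is there precisely to name the scaling at which the tilt-slope vanishes, and ``tilt-unstable'' is to be read with respect to that polarisation. (The paper's concluding sentence writes ``$\nu_{\om,0}$-unstable'', but everything in the proof---the computation of $\nu$ and of the right-hand side $\tfrac{m^2}{18}\ch_1(F)$---is at $m\om$, so this is a slip of notation rather than a stronger claim.) Under this reading your wall-crossing worry evaporates and your proof is essentially identical to the paper's. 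If instead you insist on proving $\nu_{\om,0}$-instability, then your argument is genuinely incomplete: you acknowledge the persistence of semistability along the ray as the ``main obstacle'' and only gesture at how it might be handled, and the paper provides no such argument either.
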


\begin{proof}
The condition $\nu_{m\om,  0}(F)=0$, i.e.\ $ch_2(F)=\frac{m^2}{6}ch_0(F)$, is equivalent to $c_2(E)=\frac{3n^2-m^2}{2}$.

First, note that such an $E$ exists: to use Miro-Roig's result, we require $0 \geq c_3\geq -c_2^2+c_2$ and $c_2 \geq 3$. If $c_2=\frac{3n^2-m^2}{2}$, the first inequality becomes $0 \geq c_3 \geq -\frac{(9n^4-6n^2m^2 + m^4 - 6n^2+2m^2)}{4}$, and the second becomes $3n^2 - m^2 \geq 6$. So such an $E$ exists.  Let  $F=E(-n)[1]$.  Then $ch_3(F)=\frac{n^3}{2} - nc_2 - \frac{c_3}{2}$ where $c_i=c_i(E)$, and we have $\muob(F)=\frac{c_1-3n}{3}=-n$.  Since $n > 0$, we have $F \in \Bob$.

Now, we claim that  $ch_3(F) > \frac{m^2}{18}ch_1(F)$.  Observe that
\begin{align*}
ch_3(F) &> \frac{m^2}{18}ch_1(F)    \\
\Leftrightarrow\frac{n^3}{2} - nc_2 - \frac{c_3}{2} &>  \frac{3nm^2}{18}  \\
\Leftrightarrow\frac{n^3}{2} - n\frac{(3n^2-m^2)}{2} - \frac{c_3}{2} &>  \frac{3nm^2}{18} \\
\Leftrightarrow - (2n^3 +\frac{2nm^2}{3} ) &> c_3(E),
\end{align*}
which holds by assumption.  Since Conjecture \ref{conj.bmt.1.3.1} holds on $\mathbb{P}^3$, as is proved in \cite{Mac},  $F$ must be $\nu_{\om, 0}$-unstable.
\end{proof}

\end{document}